\definecolor{deepblue}{rgb}{0,0,1}
\def\NZQ{\mathbb}               
\def\NN{{\NZQ N}}
\def\ZZ{{\NZQ Z}}
\def\Z{{\NZQ Z}}
\def\RR{{\NZQ R}}
\def\m{\mathfrak{m}}
\def\n{\mathfrak{n}}
\newcommand{\N}{\mathbb{N}}
\DeclareMathOperator{\Um}{Um}
\DeclareMathOperator{\E}{E}
\DeclareMathOperator{\W}{W}
\DeclareMathOperator{\NW}{NW}
\DeclareMathOperator{\Imj}{Im}
\DeclareMathOperator{\rad}{rad}
\def\v{\mathbf{v}}
\def\u{\mathbf{u}}
\def\w{\mathbf{w}}
\def\i{\mathfrak{i}}
\def\j{\mathfrak{j}}
\def\e{\varepsilon}
\def\p{\mathfrak{p}}
\def\GL{{\operatorname{GL}}}          
\def\M{{\operatorname{M}}}            
\def\SL{{\operatorname{SL}}}          
\def\vpmod{\!\pmod} 
\DeclareMathOperator*{\Spec}{Spec}
\DeclareMathOperator*{\maxSpec}{Max}
\DeclareMathOperator*{\height}{ht}
\DeclareMathOperator*{\Nil}{Nil}
\providecommand{\customgenericname}{}
\newcommand{\newcustomtheorem}[2]{%
  \newenvironment{#1}[1]
  {%
   \renewcommand\customgenericname{#2}%
   \renewcommand\theinnercustomgeneric{##1}%
   \innercustomgeneric
  }
  {\endinnercustomgeneric}
}
\newtheorem{theorem}{Theorem}[section]
\newtheorem{lemma}[theorem]{Lemma}
\newtheorem{corollary}[theorem]{Corollary}
\newtheorem{proposition}[theorem]{Proposition}
\newtheorem{remark}[theorem]{Remark}
\newtheorem{example}[theorem]{Example}
\newtheorem{definition}[theorem]{Definition}
\newtheorem{question}[theorem]{Question}
\newtheorem*{acknowledgement}{Acknowledgement}
\title{A generalization of Rao's theorem to graded $R$-subalgebras of $R[t]$}
\date{\today, \currenttime}
\author{Diksha Garg, Anjan Gupta}
\address{Department of Mathematics\\
Indian Institute of Science Education and Research Bhopal\\
Bhopal Bypass Road, Bhopal, Madhya Pradesh, India. Pin - 462066.}
\email{dgarg1910401@gmail.com, anjan@iiserb.ac.in}
\thanks{Corresponding author: Diksha Garg; 
{\it email: dgarg1910401@gmail.com}}
\begin{document}
 
\begin{abstract}
Let $R$ be a Noetherian local ring of Krull dimension $d$ such that $(d!)R = R$, and let $A$ be a graded $R$-subalgebra of the polynomial algebra $R[t]$. We prove that every unimodular row of length $d + 1$ over $A$ can be completed to an invertible matrix. This is a generalization of a classical result by Rao, who proved that in the same setting, every unimodular row of length $d + 1$ over $R[t]$ admits a completion to an invertible matrix.
\end{abstract}
\maketitle
 
\noindent {\it Mathematics Subject Classification: 13A02, 13B25, 13C10, 19A13, 19B10, 19B14}

\noindent {\it Keywords: Graded algebras, Unimodular elements, Symplectic Witt groups, Invertible matrices,  Projective modules}
\section{introduction}
Let $R$ be a commutative Noetherian ring with unity and $R[t]$ denote the polynomial algebra over $R$ in the indeterminate $t$. The study of projective modules and lower $K$-groups within algebraic $K$-theory fundamentally relies on the notion of unimodular rows.  A row vector $\v \in R^n$ is called a \emph{unimodular row} of length $n$ over $R$ if its coordinates generate the unit ideal $R$. The first row of any invertible matrix is unimodular. The converse fails to hold in general, as illustrated by the unimodular row $(x_0, \ldots, x_{d})$ over the coordinate ring $\frac{\RR[X_0, \ldots, X_{d}]}{(X_0^2 + \ldots + X_{d}^2 -1)}$ of the real $d$-sphere, which is not completable to an invertible matrix unless $d = 1,3,7$. Here, $x_i$ denotes the residue class of the variable $X_i$. A fundamental problem in classical algebraic $K$-theory pertains to the conditions under which a unimodular row can be realized as the first row of an invertible matrix. This problem, often referred to as the \emph{completion problem} for unimodular rows, has garnered significant attention and inspired much optimism, particularly in the wake of a seminal question formulated by Suslin \cite[\S 5]{suslin1977stably}, which we now state in an equivalent form tailored to our purposes for $n\in \NN$, $n\geq 2$.

\begin{customqn}{\rm $Q(n)$}\label{Q1}
 Let $R$ be a ring such that $(n!) R = R$. and $\v(t)$ be a unimodular row of length
$n + 1$ over $R[t]$. Does there exist a $\sigma\in \SL_{n+1}(R[t])$ such that $\v(t) \sigma = \v(0)$?
\end{customqn}
When $R$ is local, the question is tantamount to asking if $\v(t)$ is completable to a matrix in $\SL_{n+1}(R[t])$. 
Now, we assume further that the dimension of $R$ is $d$. In 1977, Suslin \cite{suslin1977structure} himself observed that the question $Q(d+1)$ admits an affirmative answer. Subsequently, in 1988, Rao \cite{rao1988bass} established the question $Q(d)$ affirmatively. In addition, he proved in \cite{rao1991completing} that if $d = 3$, then $Q(2)$ has a positive answer. Before his work, in 1986, Roitman \cite{roitman1986stably} affirmed the question $Q(n)$ under an additional assumption that $R$ has a positive characteristic and $n\geq d/2+1$.  
\
 
The work of Gubeladze in the articles \cite{dzh1988gubeladze}, \cite{gubeladze1992elementary}, \cite{gubeladze1993elementary} led to the emergence of a parallel line of development. After decades of sustained effort that began with the said articles, Gubeladze finally in 2018 completed his project in \cite{gubeladze2018unimodular}  by proving that a unimodular row of length $n$ over the monoid algebra $R[M]$ can be completed to an elementary matrix if  $M$ is a commutative, cancellative monoid and $n\geq \max{(d + 2, 3)}$. This result motivated us to study the analogous question for $Q(n)$ over graded $R$-subalgebras of the polynomial algebra $R[t_1, \ldots, t_n]$. We proved in \cite{garggupta2025graded} that if $A$ is a graded $R$-subalgebra of $R[t, \frac{1}{t}]$, then any unimodular row over $A$ of length $n\geq \max{(d + 2, 3)}$ is the first row of an elementary matrix. We present the main result of this article below in a weaker form for the sake of clarity and better appreciation.
\begin{customthm}{A}{\rm (see Theorem \ref{mainpk2})} \label{MT}
Let $R$ be a Noetherian local ring of dimension $d$ $(\geq 2)$ such that $(d!) R = R$ and $A$ be a graded $R$-subalgebra of $R[t]$. Then any unimodular row of length $d + 1$ can be realized as the first row of some invertible matrix.     
\end{customthm}
The above theorem generalizes Rao's result \cite{rao1988bass}. We emphasize that the hypothesis on $A$ does not require it to be Noetherian. It subsumes well-known classes of algebras, such as Rees algebras and Symbolic Rees algebras over $R$, among others. The Swan–Weibel map ensures the validity of Theorem \ref{MT}, contingent upon an affirmative answer to the question $Q(d)$  in its stipulated setting. Consequently, Theorem \ref{MT} provides a necessary condition for an affirmative answer to $Q(d)$. 

Before proceeding to explain the methodology, we briefly digress. The set of unimodular rows of length $n$ is denoted by $\Um_{n}(R)$. The group	$\E_{n}(R)$ generated by elementary matrices in $\GL_{n}(R)$ acts naturally on $\Um_{n}(R)$ via right multiplication. The orbit space is denoted by $\frac{\Um_{n}(R)}{\E_{n}(R)}$. If $\dim R = d = 2$, then Suslin and Vaserstein \cite{vasersteinSuslin1976serre} established a bijection between the orbit space $\frac{\Um_{3}(R)}{\E_{3}(R)}$ and the Elementary Symplectic Witt group $\W_{\E}(R)$, thereby endowing $\frac{\Um_{3}(R)}{\E_{3}(R)}$ with the structure of an abelian group. Later,  van der Kallen \cite{van1983group} extended Vaserstein's group structure to $\frac{\Um_{d +1}(R)}{\E_{d +1}(R)}$ if $d\geq 3$. A modern interpretation attributed to Fasel \cite{fasel2010some} states that if $A$ is a smooth algebra of dimension $d \geq 3$ over a perfect field $k$ of characteristic not equal to $2$, then the orbit space $\Um_{d+1}(A)/\E_{d+1}(A)$ is isomorphic to the cohomology group, $H^d(A, G^{d+1})$. The study of such orbit spaces and their algebraic properties has long been a rich and intriguing area of research. If $X$ is a smooth real affine variety of dimension $d \geq 2$ and  $\RR(X)$ denotes the ring of regular functions on the set of its real closed points $X(\RR)$, then, the structure of the orbit space $\frac{\Um_{d +1}(\RR(X))}{\E_{d +1}(\RR(X))}$ has recently been described in \cite{das2018orbit}.
Our next result, which pertains to the structure of orbit spaces over graded subalgebras of $R[t]$, broadly generalizes a result of Rao \cite[Corollary 2.3]{rao1988bass}. For clarity, we state a weaker version here.
The term "good" refers to a technical condition which we introduce in Definition \ref{defpk} of the article.

\begin{customthm}{B}{\rm (see Theorem \ref{maint})}\label{MT1}
    Let $R$ be a local ring of dimension $d$ $(\geq 2)$ such that $2kR=R$ for some $k \in \NN$ and $A$ be a graded $R$-subalgebra of $R[t]$. Then $\frac{\Um_{d+1}(A)}{\E_{d+1}(A)}$ is good and $k$-divisible. 
 \end{customthm}

There has been significant progress in understanding the \emph{completion problem} for unimodular rows over affine algebras, a line of work pioneered by Suslin. Suppose $B$ is an affine algebra of dimension $d$ over a field $k$. Suslin \cite{suslin1984cancellation} proved that when $k$ is an infinite perfect $C_1$-field and $(d!)k = k$, then any unimodular row of length $d + 1$ over $B$ can be completed to an invertible matrix. Later, Fasel et al. \cite{fasel2012stably} proved that the same holds for unimodular rows of length $d$ over $B$ if $k$ is algebraically closed, $B$ is normal and $((d- 1)! ) k = k$. Mohan Kumar's work  \cite{kumar1985stably} shows that one cannot, in general, expect that unimodular rows of length $d - 1$ are completable. It remains unknown if the result of  Fasel et al. \cite{fasel2012stably} holds when $B$ is not normal. 

Our method of proof is inspired by the strategies employed by Suslin \cite{suslin1984cancellation} and  Rao \cite{rao1988bass}, albeit in a completely different setup. If $R$ and $A$ satisfy the hypothesis of Theorem \ref{MT}, then it follows from Theorem \ref{MT1} that a  unimodular row $(v_0, v_1, \ldots, v_d) \in \Um_{d + 1}(A)$ can be transformed, via elementary operations, into a unimodular row of the form $(w_0, w_1, \ldots, w_d^{d!})$. Since the latter is always the first row of an invertible matrix \cite{suslin1977stably}, Theorem \ref{MT} becomes an immediate consequence of Theorem \ref{MT1}. 
Therefore, we concentrate exclusively on the proof of Theorem \ref{MT1}. 
We shall henceforth assume that both $R$ and $A$ are as specified in Theorem \ref{MT1}. Furthermore, we may assume, without loss of generality, that $A$ is finitely generated due to Lemmas \ref{retract}, \ref{pkhom}, \ref{47}.

The proof proceeds by induction on the dimension $d$ of $R$.  Given $\v \in \Um_{d + 1}(A)$, one familiar with the aforementioned results of Suslin, Rao and Fasel et al. will naturally consider the construction of a map (homomorphism) $\frac{\Um_{d }(A/ xA)}{\E_d(A/xA)} \rightarrow \frac{\Um_{d +1 }(A)}{\E_{d+1}(A)}$ given by $[(\overline{a_1}, \ldots, \overline{a_d})] \mapsto [(a_1, \ldots, a_d, x)]$ for some nonzero divisor $x$ of $R$, such that the image of this map contains the class $[\v] \in \frac{\Um_{d +1 }(A)}{\E_{d+1}(A)}$. The existence of such a map enables the inductive step in the context considered by the said authors, since the property of being an affine algebra (or a polynomial algebra) is preserved under passage to a quotient by a nonzero divisor of $R$. However, this approach does not apply in our setting since $A/xA$ need not be a subalgebra of a polynomial algebra. To circumvent this issue, we construct in Lemma \ref{imp} a variant of the above map in the relative setting, enabling the inductive step. 
The central tool in our construction is Lemma \ref{43}, which is motivated by the Artin-Rees lemma. 
Therefore, as a first step, we proceed to prove Theorem \ref{MT1} in the relative setting. More specifically, we establish, via induction on $d$, that the group $\frac{\Um_{d+1}(A,IA)}{\E_{d+1}(A, IA)}$ is both good and $k$-divisible, assuming that $I$ is an ideal contained in the maximal ideal of $R$.

Suppose $\dim R = d = 2$. It follows from Proposition \ref{div} that the Witt group $\W_{\E}(A)$ is $2$-divisible. The Swan-Weibel homotopy, together with the analogous result of Rao for the polynomial algebra $R[t]$  plays a key role in the proof of Proposition \ref{div}. The relative version of Theorem \ref{MT1} is established in Proposition \ref{42} by using the bijection between $\W_{\E}(A)$ and $\frac{\Um_{3}(A)}{\E_{3}(A)}$, which follows from \cite{vasersteinSuslin1976serre}. The proof makes essential use of Rao’s results, \cite[Lemmas 1.3.1, 1.5.1]{rao1988bass}, along with Lemmas \ref{retract}, \ref{pkhom}, \ref{47}.

Now we assume that $\dim R = d \geq 3$. We prove Proposition \ref{RSL} by induction on $d$, which implies that the group $\frac{\Um_{d+1}(A,IA)}{\E_{d+1}(A, IA)}$ is both good and $k$-divisible if $I$ is a proper ideal of $R$ satisfying the additional condition $It \subset A$. This condition is necessary for the inductive step, as it ensures that the hypotheses of Lemma \ref{imp} are satisfied throughout the argument.
The proof relies on two main ideas. First, we invoke Remark \ref{mainreduct}, which allows us to assume further that $I$ is a principal ideal generated by a nonzero divisor and that $R$ is reduced. Second, we develop an analogue of Roitman's degree reduction technique for algebras that is valid in this setting,  see Lemmas \ref{amonic}, \ref{RT0}, and \ref{RT}. To remove the condition $It \subset A$ from the hypothesis of Proposition \ref{RSL}, we restrict our attention to the case where $R$ is a domain. By replacing $A$ with a suitable subintegral extension in $R[t]$ and applying Lemmas  \ref{subrel}, \ref{mainprin}, we reduce to a setting in which the inclusion $It \subset A$ holds. This reduction enables the proof of Lemma \ref{finite}, which establishes the relative version of Theorem \ref{MT1} when $R$ is a domain. The complete relative version then follows from Proposition \ref{57}, which is proved by induction on the number of minimal prime ideals of $R$. Here, Lemmas \ref{IJ}, \ref{411} play an essential role. This concludes the first step of the proof.

The results developed so far enable us to prove that $ \frac{\Um_{d + 1}(R[t], (t))}{\E_{d + 1}(R[t], (t))}$ is good and $k$-divisible, as stated in Proposition \ref{mgkd}. Let  $\m$ denote the unique maximal ideal of $R$ and set $B = A/ \m A$.  As an application of Proposition \ref{mgkd}, we prove Corollary \ref{mrgrg} which implies that the orbit space $\frac{\Um_{d + 1}(B)}{\E_{d + 1}(B)}$ is also good and $k$-divisible. In \ref{maineq1} of Theorem \ref{maint}, we construct a $3$-term exact sequence of abelian groups where $\frac{\Um_{d + 1}(A)}{\E_{d + 1}(A)}$ is sitting between $\frac{\Um_{d + 1}(A, \m A)}{\E_{d + 1}(A, \m A)}$ and $\frac{\Um_{d + 1}(B)}{\E_{d + 1}(B)}$. The later two orbit spaces are already shown to be good and $k$-divisible.  From this, we deduce  that $\frac{\Um_{d + 1}(A)}{\E_{d + 1}(A)}$ is also good and $k$-divisible, proving Theorem \ref{maint} which is a more general version of Theorem \ref{MT1}.

We now briefly outline the structure of the article. In Section 2, we provide basic definitions, fix notation, and recall necessary results that are used throughout this article. In Section 3, we present several preparatory results and develop the essential machinery needed to prove the main theorems. Section 4 is devoted to the special case where dimension of the ring $R$ is $2$. The proofs of the main theorems appear in Section 5. 

Our results may suggest several promising avenues for further research, especially regarding the efficient generation of ideals of graded subalgebras of polynomial algebras, the obstruction theory of projective modules over such algebras and the \emph{completion problem} for unimodular rows in this context.  Just as Rao’s work led to the development of the Euler class group of Noetherian rings \cite{bhatwadekar2000euler} and subsequently of polynomial rings \cite{das2003euler}, our framework suggests the possibility of developing a corresponding theory for graded $R$-subalgebras of $R[t]$. The algebras considered in this work are generally non-smooth, except in the case where they are polynomial algebras. Therefore, Theorem \ref{MT} allows us to construct families of non-smooth graded algebras over which the aforementioned theorem of Fasel et al. remains valid, see Example \ref{mainex}. 

The absence of a monic inversion principle in this context presents a fundamental obstruction to extending Theorem \ref{MT} to graded $R$-subalgebras of polynomial algebras over $R$ in several variables. In this regard, we conclude the article with a collection of questions that we believe will stimulate further research interest in this area. The techniques we employ are primarily based on elementary ideas and offer a conceptual clarity that we hope will make the results accessible to a broad mathematical audience.

{\bf Throughout this article, $R$ denotes a commutative Noetherian ring with $1 \neq 0$ unless anything is specifically stated otherwise. }

\section{Preliminaries}
We begin by reminding the reader of a few elementary definitions. A row vector $\v = (v_1, \ldots, v_n)$ $\in R^n$ is said to be unimodular if there exists a row vector $\w = (w_1, \ldots, w_n) \in R^n$ such that $\v \w^t = \sum _{i =1}^{n}v_iw_i = 1$.  The set of unimodular rows in $R^n$ is denoted by $\Um_n(R)$. Let $e_i$ denote the $i$-th row vector of the identity matrix $I_n$ of size $n$. 
The set of all unimodular rows of length $n$ which are congruent to $e_1$ modulo an ideal $I$ is denoted by $\Um_n(R, I)$. Vaserstein proved that any $\v \in \Um_n(R, I)$ corresponds to a $\w \in \Um_n(R, I)$ such that $\v \w^t= 1$  \cite[Lemma 2]{vaserstein1969stabilization}. Of course $\Um_n(R) = \Um_n(R, R)$.

Recall that an elementary matrix $e_{ij}(\lambda)$ is a square matrix of size $n$ which has $1$'s on the main diagonal, $\lambda$ at the $(i, j)$-th place and $0$'s elsewhere. The group $\E_n(R)$ is generated by elementary matrices $e_{ij}(\lambda)$ for $ \lambda \in R, i \neq j, 1 \leq i, j \leq n$. If $I$ is an ideal of $R$, then $\E_n(I)$ is defined as the subgroup of $\E_n(R)$ generated by the elementary matrices $e_{ij}(\lambda)$ for $i \neq j$, $1 \leq i, j \leq n$ and $\lambda \in I$. The normal closure of $\E_n(I)$ in $\E_n(R)$ is denoted by $\E_n(R, I)$. We define $\GL_n(R, I) = \{\alpha \in \GL_n(R) : \alpha \equiv I_n \vpmod I\}$ and $\SL_n(R, I) = \GL_n(R, I) \cap  \SL_n(R)$. Clearly, $\E_n(R, I) \subset \SL_n(R, I)$.

Any subgroup $G$ of $\SL_n(R, I)$ acts on $\Um_n(R, I)$. The orbit of $\v \in \Um_n(R, I)$ in the orbit space $\frac{\Um_n(R, I)}{G}$ is denoted by $[\v]$. For $\v, \w \in \Um_n(R, I)$, we  say that $\v$ is obtained by  applying $\E_n(R, I)$-operations on $\w$ and write $\v \sim_{\E_n(R, I)} \w$ if $\v = \w \alpha$ for some $\alpha \in \E_n(R, I)$ . 
 If $f : R \rightarrow S$ is a ring homomorphism and $I$, $J$ are ideals of $R$, $S$ respectively such that $f(I) \subset J$, then $f$ induces naturally a set map  $f_* : \frac{\Um_n(R, I)}{\E_n(R, I)} \rightarrow \frac{\Um_n(S, J)}{\E_n(S, J)}$ given by $f_* [(a_i)_{1 \times n}] = [(f(a_i))_{1 \times n}]$. For all other unexplained notation and terminology, we refer the reader to \cite{lam2006serre}.

\subsection{Maximal Spectrum}\label{magr}
The set of maximal ideals of the ring $R$ is denoted by $\maxSpec R$. It has a subspace topology endowed by the Zariski topology on $\Spec R$. A closed subset of $\maxSpec R$ is given by $V_M(I) = \{\m : \m \in \maxSpec R, I \subset \m\}$ for some ideal $I$ of $R$. Set $D_M(I) = \maxSpec R \setminus V_M(I)$. The Jacobson radical of $R$  is denoted by $\rad(R)$.

\subsection{Excision algebras}\label{preexa}
If $I$ is a proper ideal of a ring $R$, we consider $R \oplus I$ as a ring under coordinate-wise addition and multiplication given by $(r,  i)(s,  j) = (rs,  rj+si+ij)$ for $r, s \in R$ and 
$i,j \in I$. The ring homomorphism $\i : R \rightarrow R \oplus I$, $\i(r) = (r, 0), r \in R$ endows $R \oplus I$ an $R$-algebra structure, called excision algebra. There is a retraction map $\e : R \oplus I \mapsto R$, $\e(r, i) = r$ for $(r, i) \in R \oplus I$ of which $\i$ is a section. One can show that $R \oplus I$ is isomorphic to the fiber product $R \times_{R/ I} R$ under the map $(r, i) \mapsto (r, r + i)$ for $(r, i) \in R \oplus I$. The extension $\i : R \rightarrow R \oplus I$ is an integral extension \cite[Proposition 3.1, Corollary 3.2]{keshari2009cancellation}. 
If $R$ is a local ring of dimension $d$, then so is $R \oplus I$ \cite[Lemma 4.3]{gupta2014nice}.

\subsection{Excision rings, Excision theorem} \label{preexr}
If $I$ is a proper ideal of a ring $R$, one constructs the excision ring $\ZZ \oplus I$ with component-wise addition and multiplication defined by 
$(n,  i)(m,  j) = (nm,  nj+mi+ij)$ for $m,n \in \ZZ, i,j \in I$.
There is a ring homomorphism $\pi: \ZZ \oplus I \longrightarrow R$,  $\pi (m , i) = m+i$ for $(m, i) \in \ZZ \oplus I$. Let $\v = (1 +i_1, i_2, \ldots, i_n) \in \Um_n(R, I)$ where $i_j$'s are in $I$. Define $\tilde{\v} = (\tilde{1} + \tilde{i_1}, \tilde{i_2}, \ldots, \tilde{i_n}) \in \Um_n(\ZZ \oplus I, 0 \oplus I)$ for $\tilde{1} = (1, 0), \tilde{i_j} = (0, i_j)$. Clearly $\pi$ sends $\tilde{\v}$ to $\v$.

 The excision theorem due to van der Kallen states that the natural maps  $ \frac{\Um_n(\ZZ \oplus I, 0 \oplus I)}{\E_n(\ZZ \oplus I, 0 \oplus I)} \longrightarrow \frac{\Um_n(R, I)}{\E_n(R, I)}$  defined by $[(a_i)_{1 \times n}] \mapsto [(\pi(a_i))_{1 \times n}]$ and  $ \frac{\Um_n(\ZZ \oplus I, 0 \oplus I)}{\E_n(\ZZ \oplus I, 0 \oplus I)} \longrightarrow \frac{\Um_n(\ZZ \oplus I)}{\E_n(\ZZ \oplus I)}$ defined by $ [(a_i)_{1 \times n}] \mapsto [(a_i)_{1 \times n}]$ are bijections whenever $n \geq 3$ \cite[Theorem 3.21]{van1983group}.

\subsection{Symplectic Witt Groups}\label{swg}
Let $R$ be a ring which is not necessarily Noetherian. 
If $\alpha \in M_{2r}(R)$, $\beta \in M_{2s}(R)$, $r, s \in \NN$, then one defines $ \alpha \perp \beta = \begin{pmatrix}
        \alpha & 0\\
        0 & \beta 
    \end{pmatrix}$ .
    Let     
    $\Psi_1= \begin{pmatrix}
        0 & 1 \\-1 & 0
    \end{pmatrix}$, $\Psi_{r+1}= \Psi_{r} \perp \Psi_1$ for $r \geq 1$.  
Define $\SL(R) = \varinjlim \SL_n(R)$  and $\E(R)  = \varinjlim \E_n(R)$. Suppose $G$ denotes either $\E(R)$ or $\SL(R)$.

We recall symplectic Witt groups as defined by Vaserstein and Suslin in \cite[\S3]{vasersteinSuslin1976serre}.   
Let $S$ be the collection of all alternating matrices of Pfaffian $1$.  For $\alpha \in \M_{2r}(R) \cap S$ and $\beta \in \M_{2s}(R) \cap S$,  $r,s\in \NN$, we define $\alpha \sim \beta$, if there exists $\varepsilon \in G \cap \M_{2(r+s+l)}(R)$  such that $\varepsilon^t(\alpha \perp \Psi_{s+l})\varepsilon=\beta \perp \Psi_{r+l}$ for some $l\in \NN$.  The relation $\sim$ is an equivalence relation and the set of all equivalence classes $S/ \sim$ equipped with the $\perp$ operation forms an abelian group. This group is denoted by $\W_{\SL}(R)$ when $G = \SL(R)$ and is called the symplectic Witt group. Likewise, when $G = \E(R)$, it is denoted by $\W_{\E}(R)$ and is called the elementary symplectic Witt group. Both associations $R \rightarrow \W_{\SL}(R)$,  $R \rightarrow \W_{\E}(R)$ are functorial.

 In the same article, authors defined a map $\frac{\Um_3(R)}{G \cap \SL_3(R)} \rightarrow \W_{\E}(R)$ given by 
\[ [(a_1,a_2,a_3)]  \mapsto V(a,b) = \begin{pmatrix}
        0 & -b_1 & -b_2 & -b_3\\
       b_1 & 0 & -a_3 & a_2 \\
       b_2 & a_3 & 0 & -a_1 \\
       b_3 & -a_2 & a_1 & 0 
       \end{pmatrix}    \]
where $(a_1, a_2, a_3) \in \Um_3(R)$ and $(b_1, b_2, b_3) \in \Um_3(R)$ are chosen such that $a_1b_1 + a_2b_2 + a_3b_3 = 1$. 

It follows from \cite[Theorem 5.2]{vasersteinSuslin1976serre}, that the above map is bijective if $R$ satisfies the condition that $\Um_n(R) = e_1 \E_n(R)$ for all $n \geq 4$. Moreover, it is implicit in the proof of  \cite[Corollary 7.4]{vasersteinSuslin1976serre} that $\frac{\Um_3(R)}{\E(R) \cap \SL_3(R)} = \frac{\Um_3(R)}{\E_3(R)}$ when the  aforementioned condition and the condition $\SL_4(R) \cap \E(R) = \E_4(R)$ both are satisfied. There are two well-known cases where these conditions are readily satisfied.

The first case concerns rings of type $R$ for which $\maxSpec R$ is a union of finitely many Noetherian subspaces of dimension at most $2$. The second case consists of polynomial rings of type $R_0[X_1, \ldots, X_n]$ where $R_0$ is a ring of dimension $2$, see \cite[Theorems 2.6,  7.8]{suslin1977structure}. In both cases we immediately check that $\frac{\Um_3(R)}{\SL_3(R)} \cong \W_{\SL}(R)$ and $\frac{\Um_3(R)}{\E_3(R)} \cong \W_{\E}(R)$ have the structure of an abelian group. The group operation is described in (\ref{gop}), \S \ref{pregst} below.

In \cite[Proposition 2.2]{banerjee2023stability},  Banerjee proved the following lemma in the special case where $R$ is a domain. For our purpose, we need a more general version.

\begin{lemma}\label{maindim}
Let $R$ be a ring and $A =\oplus_{i\geq 0} A_i$ be a finitely generated graded $R$-algebra of dimension $d$ such that $A_0 = R$. If $s \in \rad(R) \setminus \{0\}$, then $\dim(A_s) < d$.
\end{lemma}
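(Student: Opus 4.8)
The plan is to deduce the statement from the case already treated by Banerjee, namely $R$ a domain \cite[Proposition 2.2]{banerjee2023stability}. The reduction rests on two elementary facts: every minimal prime of a graded ring is homogeneous, so that passing to a quotient of $A$ by such a prime preserves the hypotheses of the lemma; and the image of the Jacobson radical under a surjection of rings lands inside the Jacobson radical of the target, so that the condition $s\in\rad(R)$ is inherited by those quotients.

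First I would recall that $A$ is Noetherian, being finitely generated over the Noetherian ring $R$, and let $\mathfrak{q}_1,\dots,\mathfrak{q}_r$ be its minimal primes. A prime of $A$ not containing $s$ is a minimal prime of $A_s$ exactly when it is a minimal prime of $A$; hence the minimal primes of $A_s$ are the $\mathfrak{q}_jA_s$ with $s\notin\mathfrak{q}_j$, and therefore
\[
\dim A_s \;=\; \max\{\,\dim (A/\mathfrak{q}_j)_s \ :\ 1\le j\le r,\ s\notin\mathfrak{q}_j\,\}
\]
(if $s$ lies in every $\mathfrak{q}_j$ then $A_s=0$ and there is nothing to prove, since $A\supseteq R\ni 1\neq 0$ forces $d\ge 0$). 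So it suffices to bound $\dim(A/\mathfrak{q})_s$ for a fixed minimal prime $\mathfrak{q}=\mathfrak{q}_j$ with $s\notin\mathfrak{q}$. Here $\mathfrak{q}$ is homogeneous: the ideal generated by the homogeneous components of the elements of $\mathfrak{q}$ is a prime ideal contained in $\mathfrak{q}$, hence equal to $\mathfrak{q}$ by minimality. Thus $A':=A/\mathfrak{q}$ is a graded domain, finitely generated over its degree-zero part $A'_0=R/(\mathfrak{q}\cap R)=:R'$, which is a domain, and $\dim A'\le\dim A=d$.

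Write $\bar s\in R'$ for the image of $s$. Since $s\in R$ and $s\notin\mathfrak{q}$, we have $s\notin\mathfrak{q}\cap R$, so $\bar s\neq 0$; and since the maximal ideals of $R'$ are the images of the maximal ideals of $R$ containing $\mathfrak{q}\cap R$, the image of $\rad(R)$ in $R'$ is contained in $\rad(R')$, whence $\bar s\in\rad(R')$. Applying \cite[Proposition 2.2]{banerjee2023stability} to the domain $R'$, the finitely generated graded $R'$-algebra $A'$ and the element $\bar s\in\rad(R')\setminus\{0\}$ gives $\dim(A')_{\bar s}<\dim A'\le d$. As $(A/\mathfrak{q})_s=(A')_{\bar s}$, this yields $\dim(A/\mathfrak{q})_s\le d-1$; taking the maximum over the relevant minimal primes gives $\dim A_s\le d-1<d$.

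The whole argument is bookkeeping, and I do not anticipate a serious obstacle: the substantive input (the domain case) is Banerjee's, and the only steps needing a moment's care are the identity $\dim A_s=\max_j\dim(A/\mathfrak{q}_j)_s$ and the inheritance of the hypothesis $s\in\rad(R)$ by each $R'$. It is worth remarking that this hypothesis cannot be weakened to $s\neq 0$: already for $A=R[t]$ with $R=k[x]$ a polynomial ring over a field and $s=x$, one has $A_s=k[x,x^{-1},t]$ of dimension $2=\dim A$.
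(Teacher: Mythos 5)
Your proof is correct in substance but takes a genuinely different route from the paper. The paper does not reduce to Banerjee's domain case at all: it argues directly by contradiction, using the fact (Bruns--Herzog, Exercise 1.5.25) that the Noetherian positively graded ring $A_s$ attains its dimension at a graded maximal ideal; pulling back gives a homogeneous prime $\mathfrak{n}$ of $A$ with $s \notin \mathfrak{n}$ and $\height \mathfrak{n} = d$, which forces $\mathfrak{n} = \m \oplus (\oplus_{i \geq 1} A_i)$ for some maximal ideal $\m$ of $R$, contradicting $s \in \rad(R) \subset \m$. You instead decompose along the (finitely many, homogeneous) minimal primes $\mathfrak{q}_j$ of $A$, use $\dim A_s = \max_j \dim (A/\mathfrak{q}_j)_s$, check that each $R/(\mathfrak{q}_j \cap R)$ is a domain inheriting $\bar{s} \in \rad \setminus \{0\}$, and quote Banerjee's Proposition 2.2 as a black box. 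Your route buys economy -- the new content is bookkeeping and the substantive input is outsourced -- while the paper's route buys self-containedness: it works in one step, needs no irreducible decomposition, and does not depend on Banerjee's statement having exactly the hypotheses the paper's paraphrase attributes to it.

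One small repair in your reduction: to see that a minimal prime $\mathfrak{q}$ of the graded ring $A$ is homogeneous, the relevant ideal is $\mathfrak{q}^{*}$, the ideal generated by the homogeneous \emph{elements} of $\mathfrak{q}$ (equivalently, the largest homogeneous ideal contained in $\mathfrak{q}$); this is prime and contained in $\mathfrak{q}$, hence equals $\mathfrak{q}$ by minimality. The ideal generated by the homogeneous \emph{components} of elements of $\mathfrak{q}$, as you wrote, contains $\mathfrak{q}$ and is in general strictly larger (for $\mathfrak{q} = (x-1) \subset k[x]$ it is the unit ideal), so it is not the right object. The fact you need is standard, and with this adjustment the rest of your argument goes through.
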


\begin{proof}
Suppose, on the contrary, we have $\dim(A_s) = d$. The ring $A_s$ is a Noetherian positively graded ring, so it admits a homogeneous maximal ideal of height $d$, see \cite[Exercise 1.5.25]{bruns1998cohen}. It follows that there is a homogeneous prime ideal $\n = \oplus_{i\geq 0 } \n_i$ of $A$ such that  $s \not \in \n$ and $\height (\n) = d$. Clearly $\n \subset \m \oplus (\oplus_{i\geq 1 } A_i)$ where $\m$ is a maximal ideal of $R$ containing $\n_0$. As $\height \n = \dim A$, the prime ideal $\n$ is a maximal ideal of $A$, so $\n = \m \oplus (\oplus_{i\geq 1 } A_i)$. This is a contradiction since $s \in \m$ but $s \not\in \n$. Hence, the proof follows. 
\end{proof}

\subsection{A group structure on the orbit space}\label{pregst}
Let $R$ be a ring which is not necessarily Noetherian, and $I$ be an ideal of $R$. Suppose $D_M(I)$ is a union of finitely many  Noetherian subspaces of dimension at most $d \geq 2$. Extending the result of Suslin and Vaserstein \cite[Theorem 5.2]{vasersteinSuslin1976serre},  van der Kallen defined an abelian group structure on $ \frac{\Um_{d + 1}(R, I)}{\E_{d + 1}(R, I)}, d \geq 2$, see  \cite[Theorem 3.6]{van1983group}. We describe it here. 

Suppose $[\v], [\w] \in \frac{\Um_{d + 1}(R, I)}{\E_{d + 1}(R, I)}$ for $\v, \w  \in \Um_{d + 1}(R, I)$.
We may assume that $ \v = (a, a_1, a_2, \ldots, a_{d })$ and $\w = (b, a_1, a_2, \ldots, a_{d })$, see \cite[Lemma 3.4]{van1983group}. 
Let $ap \equiv 1 \vpmod{(a_1, \ldots, a_d)}$ for $p \in R$. 
The orbit space $\frac{\Um_{d + 1}(R, I)}{\E_{d + 1}(R, I)}$ is an abelian group under the operation given by 
\begin{equation}\label{gop}
[\w] [\v] = [(a(b + p) -1, a_1(b + p), a_2, \ldots, a_{d})].
\end{equation}
Here $[e_1]$ is the identity element.

 Later in \cite[Theorem 4.1]{van1989module}, van der Kallen strengthened his result by proving that $\frac{\Um_{n + 1}(R, I)}{\E_{n + 1}(R, I)}$ has a group structure under the same operation if $n \geq \max \{1 + \frac{d}{2}, 2\}$. 
It is shown in \cite[Theorem 3.16]{van1983group}, \cite[Lemma 3.5 (v)]{van1989module} that the group operation (\ref{gop}) takes particularly simple form when one of the coordinates of either $\v$ or $\w$ is a perfect square. Let $a = a'^2$, then
\begin{equation}\label{gop1}
[(b, a_1, a_2, \ldots, a_{d })] [(a'^2, a_1, a_2, \ldots, a_{d })] = [(b a'^2, a_1, a_2, \ldots, a_{d})].
\end{equation}

It is worth pointing out here that if $f : R \rightarrow S$ is a ring homomorphism and $I$, $J$ are ideals of $R$, $S$ respectively such that $f(I) \subset J$, then the induced map $ f_*: \frac{\Um_n(R, I)}{\E_n(R, I)} \rightarrow \frac{\Um_n(S, J)}{\E_n(S, J)}$ is a group homomorphism if both orbit spaces have group structures as in (\ref{gop}). 

The following examples elucidate the group structure of specific orbit spaces, which are essential for the proof of the main results.

\begin{example}\label{prex}
Suppose $R$ is a ring of dimension $d (\geq 2)$ and $A$ is a finitely generated graded $R$-subalgebra of the polynomial ring $R[t]$. We deduce from the dimension inequality \cite[Theorem 15.5]{matsumura1989commutative} that $\dim A \leq d + 1$. Let $I = (x_1, \ldots, x_r) \subset \rad(R)$ be an ideal. We observe the following. 
\begin{enumerate}
\item
It is clearly seen that $D_M(IA) = \cup_{i = 1}^r D_M(x_iA)$ and $D_M(x_iA) \subset \Spec A_{x_i}$ which has dimension at most $d$ by Lemma \ref{maindim}. Therefore, $D_M(IA)$ is a union of finitely many Noetherian subspaces of dimension at most $d$. The orbit space $\frac{\Um_{d + 1}(A, IA)}{\E_{d +1}(A, IA)}$ is an abelian group under the operation as in (\ref{gop}), \S \ref{pregst}.

\item 
Now we further assume that $\height \rad(R) \geq 1$.  We find $s \in \rad(R)$ such that $\height (s) = 1$. Any minimal prime ideal of $A$ is of the form $\p R[t] \cap A$ where $\p$ is a minimal prime ideal of $R$, so it cannot contain $s$. Therefore, we have $\height (sA) = 1$ and consequently, $\dim (A/ sA) \leq d$. We also have $\dim A_s \leq d$ by Lemma \ref{maindim}. It follows that $\maxSpec A = V_M(sA) \cup D_M(sA)$ is a union of two Noetherian subspaces of dimension at most $d$ and consequently so is $D_M(J)$ for any ideal $J$ of $A$. Hence, $\frac{\Um_{d + 1}(A, J)}{\E_{d +1}(A, J)}$ has an abelian group structure as in (\ref{gop}), \S \ref{pregst} for any ideal $J$ of $A$.
\end{enumerate}

\end{example}

The lemma below is well-known. In the absence of a suitable reference, we include a proof for completeness.

\begin{lemma}\label{retract}
Let $R$ be a ring which is not necessarily Noetherian. Assume that  $\maxSpec R$ is a union of finitely many Noetherian subspaces of dimension at most $d$. Let $q : R \rightarrow S$ be a retraction map and $I = \ker q$. Then the following sequence is exact. 
\[
0 \rightarrow \frac{\Um_{d+1}(R, I)}{\E_{d+1}(R, I)}\xrightarrow{\alpha} \frac{\Um_{d+1}(R)}{\E_{d+1}(R)} \xrightarrow{q_*} \frac{\Um_{d+1}(S)}{\E_{d+1}(S) }\rightarrow 0
\]
Here $\alpha$ is the natural map $[\v]\mapsto [\v]$ for  $ \v \in \Um_{d+1}(R, I)$ and $q_*$ is the map induced by $q$. 
\end{lemma}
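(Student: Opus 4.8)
The plan is to exploit the section $s\colon S\to R$ supplied by the retraction (so $q$ is surjective, $qs=\mathrm{id}_S$, and $I=\ker q$), together with the bookkeeping for the group structures recorded in \S\ref{pregst}. First I would note that all three orbit spaces are abelian groups: the map on maximal spectra induced by the surjection $q$ identifies $\maxSpec S$ with the closed subset $V_M(I)$ of $\maxSpec R$, and $D_M(I)$ is a subspace of $\maxSpec R$, so each of $\maxSpec R$, $\maxSpec S$ and $D_M(I)$ is a union of finitely many Noetherian subspaces of dimension at most $d$; hence \S\ref{pregst} applies, and by the functoriality statement there both $\alpha$ and $q_*$ are group homomorphisms. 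It therefore suffices to prove: (a) $q_*$ is surjective; (b) $\Imj\alpha=\ker q_*$; and (c) $\ker\alpha=0$.

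Parts (a) and (b) I would handle directly with the section. For (a): given $\overline{\v}\in\Um_{d+1}(S)$ with $\overline{\v}\,\overline{\w}^t=1$, apply $s$ entrywise; then $s(\overline{\v})s(\overline{\w})^t=1$, so $s(\overline{\v})\in\Um_{d+1}(R)$ and $q_*([s(\overline{\v})])=[\overline{\v}]$. The inclusion $\Imj\alpha\subseteq\ker q_*$ is immediate, since $\v\equiv e_1\pmod{I}$ forces $q(\v)=e_1$. For the reverse inclusion, take $[\v]\in\ker q_*$ and write $q(\v)=e_1\overline{\varepsilon}$ with $\overline{\varepsilon}\in\E_{d+1}(S)$; then $\v':=\v\, s(\overline{\varepsilon})^{-1}$ is unimodular (as $s(\overline{\varepsilon})^{-1}$ is invertible) and satisfies $q(\v')=e_1$, hence $\v'\in\Um_{d+1}(R,I)$, while $\v'\sim_{\E_{d+1}(R)}\v$, so $\alpha([\v'])=[\v]$.

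The crux is (c), and the key point I would isolate as a sublemma is that, for the ideal $I=\ker q$ of a ring retraction, $\E_{d+1}(R)\cap\GL_{d+1}(R,I)=\E_{d+1}(R,I)$. To prove it, write a given $\varepsilon\in\E_{d+1}(R)\cap\GL_{d+1}(R,I)$ as $\varepsilon=\prod_{k}e_{i_kj_k}(\lambda_k)$, decompose each coefficient as $\lambda_k=s(q(\lambda_k))+\mu_k$ with $\mu_k:=\lambda_k-s(q(\lambda_k))\in I$, and use $e_{ij}(a+b)=e_{ij}(a)e_{ij}(b)$ to factor $\varepsilon$ as an alternating product of the matrices $\beta_k:=e_{i_kj_k}(s(q(\lambda_k)))$ and $\gamma_k:=e_{i_kj_k}(\mu_k)\in\E_{d+1}(I)$. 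Pulling all the $\beta_k$ to the left replaces each $\gamma_k$ by an $\E_{d+1}(R)$-conjugate of itself, which stays in the normal subgroup $\E_{d+1}(R,I)$; this gives $\varepsilon=B\,\delta$ with $\delta\in\E_{d+1}(R,I)$ and $B=\prod_k\beta_k=s(\overline{B})$, where $\overline{B}:=\prod_k e_{i_kj_k}(q(\lambda_k))$ and $s$, $q$ also denote the entrywise maps on matrices. Applying $q$ and using $q(\varepsilon)=I_{d+1}=q(\delta)$ (both lie in $\GL_{d+1}(R,I)$) yields $\overline{B}=q(B)=I_{d+1}$, whence $B=s(\overline{B})=I_{d+1}$ and $\varepsilon=\delta\in\E_{d+1}(R,I)$. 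Granting the sublemma, (c) follows: if $\v\in\Um_{d+1}(R,I)$ and $\v=e_1\varepsilon$ with $\varepsilon\in\E_{d+1}(R)$, set $\varepsilon'':=s(q(\varepsilon))^{-1}\varepsilon\in\E_{d+1}(R)$; applying $s$ entrywise to the identity $e_1\,q(\varepsilon)=q(\v)=e_1$ gives $e_1\,s(q(\varepsilon))=e_1$, hence $\v=e_1\varepsilon''$, while $q(\varepsilon'')=I_{d+1}$ puts $\varepsilon''\in\GL_{d+1}(R,I)\cap\E_{d+1}(R)=\E_{d+1}(R,I)$; therefore $[\v]=[e_1]$.

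I expect the sublemma — equivalently, the injectivity of $\alpha$ — to be the main obstacle. The inclusion $\E_{d+1}(R,I)\subseteq\E_{d+1}(R)\cap\GL_{d+1}(R,I)$ is definitional, but the reverse inclusion genuinely uses that $I$ is the kernel of a retraction (it is false for arbitrary ideals), and the reordering-and-conjugation argument must be carried out carefully so as to land inside the normal closure $\E_{d+1}(R,I)$ rather than merely in $\E_{d+1}(R)$. Everything else reduces to formal manipulations with the section $s$ and the functoriality of the group operation $(\ref{gop})$ recorded in \S\ref{pregst}.
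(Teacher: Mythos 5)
Your proof is correct and follows essentially the same route as the paper: split surjectivity of $q_*$ via the section, exactness at the middle term by moving a row in $\ker q_*$ by an elementary matrix coming from $S$, and injectivity of $\alpha$ reduced to the identity $\E_{d+1}(R)\cap\GL_{d+1}(R,I)=\E_{d+1}(R,I)$ for the kernel of a retraction. The only difference is that the paper quotes this last identity from \cite[p.~204, Lemma 1.5]{lam2006serre}, whereas you prove it directly by the decomposition-and-conjugation argument, which is a valid (and standard) proof of the cited fact.
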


\begin{proof}
Let $i : S \rightarrow R$ be a section of $q$, i.e. $q \circ i = id_S$. We may view $S$ as a subring of $R$ via the injective map $i$. Then $q : R \rightarrow S$ is a surjection such that its restriction to $S$ is the identity map on $S$. By \S \ref{pregst}, all orbit spaces involved have abelian group structures and the intermediate maps $\alpha, q_*$ are group homomorphisms.  Clearly, $q_*$ is a split surjection with the section $i_*$ induced by $i$. 

Suppose $[\w] \in \ker q_*$ for $\w \in \Um_{d +1}(R)$. If $\overline{\w}$ denotes the image under $q$, then there is a $\tau \in \E_{d + 1}(S) \subset \E_{d + 1}(R)$ such that $\overline{\w} \tau = e_1$. Clearly the image of $\w \tau$ under $q$ is $e_1$, so $\w \tau \in \Um_{d + 1}(R, I)$. It follows that $[\w] = [\w \tau] \in \Imj \alpha$. Therefore, $ \ker q_* \subset \Imj \alpha$ and consequently $ \ker q_*  =  \Imj \alpha$ as the reverse inclusion is clear. 

 Let $[\v] \in \ker \alpha$ for $\v \in \Um_{d + 1}(R, I)$. Then there is a $\tau \in \E_{d + 1}(R)$ such that $\v \tau = e_1$. 
If $\overline{\tau} \in \E_{d + 1}(S)$ denotes the image of $\tau$ under $q$, we have $e_{1} \overline{\tau} = e_{1}$. Let $\eta = \tau \overline{\tau}^{-1} $, then we have $\v \eta = e_1$ and $\eta \in \E_{d+1}(R)\cap \SL_{d+1}(R, I)= \E_{d+1}(R, I)$ by \cite[p. 204, Lemma 1.5]{lam2006serre}. It follows that $[\v] = [e_1]$ in $\frac{\Um_{d+1}(R, I)}{\E_{d+1}(R, I)}$. Therefore, $\ker \alpha$ is trivial and $\alpha$ is injective. The proof is complete here.   
\end{proof}

\begin{remark}\label{retract1}
If we weaken the hypothesis of Lemma \ref{retract} by saying that $q : R \rightarrow S$ is only a surjective homomorphism, then the argument in the proof shows that  
$\frac{\Um_{d+1}(R, I)}{\E_{d+1}(R, I)}\rightarrow \frac{\Um_{d+1}(R)}{\E_{d+1}(R)} \rightarrow \frac{\Um_{d+1}(S)}{\E_{d+1}(S)}$ is exact.
\end{remark}

\subsection{An exact sequence}\label{preexact}
   Let $R$ be a ring which is not necessarily Noetherian and $\maxSpec R$ be a union of finitely many Noetherian subspaces of dimension at most $d$. Let  $I = (x_1, \ldots, x_n)$ be a proper ideal of $R$. We observe that $(R \oplus I)_{(0, x_i)} = R_{x_i}$, therefore $D_M(0, x_i) = D_M(x_i)$. Moreover,
 \[
\maxSpec (R \oplus I) = V_M(0 \oplus I) \cup D_M(0, x_1)  \cup \ldots \cup D_M(0, x_n)
= \maxSpec R \cup D_M(x_1)  \cup \ldots \cup D_M(x_n).
\]   
Each of $\maxSpec R$, $D_M(x_1), \ldots, D_M(x_n)$ is a union of finitely many Noetherian subspaces of dimension at most $d$, therefore so is $\maxSpec (R \oplus I)$.

We observe that $\e : R \oplus I \rightarrow R$ is retraction  map with $\ker \e = 0 \oplus I$, so by Lemma \ref{retract}, we have the following short exact sequence
\[0 \rightarrow \frac{\Um_{d + 1}(R \oplus I, 0 \oplus I)}{\E_{d + 1}(R \oplus I, 0 \oplus I)} \xrightarrow{\alpha} \frac{\Um_{d + 1}(R \oplus I)}{\E_{d + 1}(R \oplus I)}  \xrightarrow{\e_*} \frac{\Um_{d + 1}(R)}{\E_{d + 1}(R)} \rightarrow 0.\]
By the double excision theorem  \cite[Lemma 3.1]{gupta2014nice}, there is an isomorphism $\mathfrak{f} : \frac{\Um_{d + 1}(R, I)}{\E_{d + 1}(R, I)} \rightarrow \frac{\Um_{d + 1}(R \oplus I, 0 \oplus I)}{\E_{d + 1}(R \oplus I, 0 \oplus I)}$. If we choose $\j = \alpha \circ \mathfrak{f}$, we get the exact sequence below. 
\[0 \rightarrow \frac{\Um_{d + 1}(R,  I)}{\E_{d + 1}(R, I)} \xrightarrow{\j} \frac{\Um_{d + 1}(R \oplus I)}{\E_{d + 1}(R \oplus I)}  \xrightarrow{\e_*} \frac{\Um_{d + 1}(R)}{\E_{d + 1}(R)} \rightarrow 0.\]
One observes that $[(1 + i_0, i_1, \ldots, i_d)] \xrightarrow{\j} [(1, i_0), (0, i_1), \ldots, (0, i_d))]$ for $(1 + i_0, i_1, \ldots, i_d) \in \Um_{d + 1}(R, I)$.

The following result is proved in \cite[Theorem 2.4]{gupta2016existence}.
\begin{theorem}\label{AG}

Let $S$ be a multiplicatively closed subset of a ring $A$ and $R= S^{-1}A$ be a ring of dimension at most $d$.
 Let $(a_0, a_1,\ldots, a_n)\in \Um_{n+1}(R)$, $n \geq d +1$. 
 Then for any $s \in S$ there exist $c_1, c_2,\ldots, c_{d+1}\in sA$ such that $(a_1+c_1a_0, a_2+c_2a_0,\ldots, a_{d+1}+c_{d+1}a_0, a_{d+2},\ldots, a_n) \in \Um_{n}(R)$.
\end{theorem}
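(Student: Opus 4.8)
The statement is Bass's stable range estimate $\mathrm{sr}(R)\le\dim R+1$, in the sharper form in which only the first $d+1$ coordinates (after $a_0$) need be altered, enriched with the assertion that the elementary correction terms can be taken inside the additive subset $sA$ of $R$. The plan is to first reduce to the case $n=d+1$ and then to run the classical proof of the stable range theorem, checking that each prime-avoidance choice it demands can be carried out within $sA$; the point that makes this work is that $sA$ is ``large enough'' in a sense that is preserved by pulling back along $A\to R$. For the reduction, write $a_j=\alpha_j/\sigma_j$ with $\alpha_j\in A$, $\sigma_j\in S$ for $d+2\le j\le n$; since each $\sigma_j$ is a unit of $R$ one gets $(a_{d+2},\dots,a_n)R=(\alpha_{d+2},\dots,\alpha_n)R=S^{-1}\mathfrak{c}_0$ with $\mathfrak{c}_0=(\alpha_{d+2},\dots,\alpha_n)A$, so $R/(a_{d+2},\dots,a_n)R\cong\bar S^{-1}(A/\mathfrak{c}_0)$ is again a localization of a ring, has dimension $\le d$, receives the image of $s$ in its multiplicative set, and the image of $(a_0,\dots,a_{d+1})$ is unimodular there. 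Solving the $n=d+1$ problem for this quotient and lifting the correction terms to $c_1,\dots,c_{d+1}\in sA$ arbitrarily, the row $(a_1+c_1a_0,\dots,a_{d+1}+c_{d+1}a_0,a_{d+2},\dots,a_n)$ generates $R$, because its first $d+1$ entries generate $R$ modulo $(a_{d+2},\dots,a_n)$. Hence it suffices to treat $n=d+1$.

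For $n=d+1$ one would argue by induction on $d$ following the classical proof of the stable range theorem: starting from $(a_0,a_1,\dots,a_{d+1})\in\Um_{d+2}(R)$ one modifies the coordinates $a_i\mapsto a_i+c_ia_0$ so as to successively cut down the closed subset of $\maxSpec R$ on which the modified entries vanish simultaneously — this locus always avoids $a_0$ by unimodularity — until one is left with a unimodular row of length $d+1$. Each modification step is governed by a finite family of \emph{pairwise distinct} prime ideals $\mathfrak{p}_1,\dots,\mathfrak{p}_r$ of $R$ (generic points of the relevant top-dimensional components, none of which contains $a_0$): one must choose the correction term $c$ so that $a_i+ca_0\notin\mathfrak{p}_j$ for every $j$, and since $a_0\notin\mathfrak{p}_j$ the forbidden values of $c$ form a union $\bigcup_j(t_j+\mathfrak{p}_j)$ of cosets, at most one coset per prime.

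The heart of the argument is to realise such a choice inside $sA$. Since $R=S^{-1}A$, the canonical map $\Spec R\hookrightarrow\Spec A$ is injective, so the contractions $\mathfrak{P}_j$ of the $\mathfrak{p}_j$ to $A$ under $\varphi\colon A\to R$ are again pairwise distinct primes of $A$. Because $s$ is a unit of $R$, an element $c=s\gamma$ with $\gamma\in A$ lies in $t_j+\mathfrak{p}_j$ precisely when $\varphi(\gamma)$ lies in the coset $s^{-1}t_j+\mathfrak{p}_j$, i.e. when $\gamma$ lies in a specific coset of $\mathfrak{P}_j$ (and if $s^{-1}t_j+\mathfrak{p}_j$ does not meet $\varphi(A)$, the $j$-th condition is automatic). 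Thus it is enough to find $\gamma\in A$ avoiding finitely many cosets of pairwise distinct primes $\mathfrak{P}_j$ of $A$. This is always possible, by induction on $r$: reorder so that $\mathfrak{P}_r$ is minimal for inclusion among $\mathfrak{P}_1,\dots,\mathfrak{P}_r$; a $\gamma_0$ that works for $\mathfrak{P}_1,\dots,\mathfrak{P}_{r-1}$ either already works, or may be replaced by $\gamma_0+\delta$ with $\delta\in(\mathfrak{P}_1\cap\dots\cap\mathfrak{P}_{r-1})\setminus\mathfrak{P}_r$, a set that is nonempty since $\mathfrak{P}_r$ is prime and contains none of $\mathfrak{P}_1,\dots,\mathfrak{P}_{r-1}$. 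Taking $c=s\gamma\in sA$ executes the step, and iterating over all steps yields the required $c_1,\dots,c_{d+1}\in sA$.

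The main obstacle is the bookkeeping inside the stable range induction: one must organise the procedure so that at every stage the forbidden locus for the correction term really is a finite union of cosets of \emph{distinct} primes of $R$ (two cosets of a single prime could already exhaust the ring), and so that the lower-dimensional rings one passes to remain of the form ``localization of a ring receiving $A$'', which is exactly what keeps both the inductive hypothesis applicable and the membership $c_i\in sA$ valid after lifting; the usual finiteness hypothesis on $\maxSpec R$ needed for the stable range machinery is harmless here. The base case $d=0$, where $R$ is Artinian, already carries the full combinatorial content of the coset-avoidance argument and is handled by exactly the mechanism described above.
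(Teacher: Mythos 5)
This statement is not proved in the paper at all: it is imported verbatim from \cite[Theorem 2.4]{gupta2016existence}, so there is no internal proof to compare against. Your argument is a correct self-contained reconstruction, and it follows the route one would expect the cited source to take: reduce to $n=d+1$ by passing to $R/(a_{d+2},\dots,a_n)R\cong \bar S^{-1}(A/\mathfrak{c}_0)$, then run the Bass-type dimension-dropping argument, at each of the $d+1$ steps avoiding the finitely many generic points $\p_j$ of the current vanishing locus inside $D(a_0)$; since $a_0\notin\p_j$, the bad correction terms form at most one coset of each $\p_j$, and because $s$ is a unit of $R$ and $\Spec R\hookrightarrow\Spec A$ is injective, these pull back to at most one coset of each of the pairwise distinct contractions $\mathfrak{P}_j\subset A$, where your coset-avoidance lemma (minimal prime first, then perturb by $\delta\in(\mathfrak{P}_1\cap\dots\cap\mathfrak{P}_{r-1})\setminus\mathfrak{P}_r$) applies; taking $c=s\gamma$ lands the correction in $sA$ as required. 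Two cosmetic caveats: the finiteness of the set of generic points at each stage uses Noetherianness (harmless here, since the paper's blanket convention and all of its applications of Theorem \ref{AG} involve Noetherian rings), and your ``induction on $d$'' is really just the finite iteration of the avoidance step for $i=1,\dots,d+1$ — no genuine passage to lower-dimensional rings is needed beyond the initial quotient, so the ``bookkeeping obstacle'' you flag at the end is already resolved by the setup you describe.
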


The following lemma is folklore. We outline a proof for completeness.

\begin{lemma}\label{26}\label{nil}
 Let $R$ be a ring and $I$ be an ideal of $R$. Suppose  $\v = (v_1, \ldots, v_n) \in \Um_{n}(R, I)$ and $x \in \Nil(R) \cap I$. Then $(v_1, \ldots,v_{i - 1}, v_i +x, v_{i + 1}, \ldots, v_n)\sim_{\E_n(R,I)} \v$. In particular, if $v_1$ is a unipotent element of $R$ then $\v \sim_{\E_{n}(R,I)} e_1$.
\end{lemma}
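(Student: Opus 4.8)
The plan is to prove Lemma \ref{26} directly by exhibiting elementary operations. The key observation is that a nilpotent element lying in the ideal $I$ can be ``moved around'' among the coordinates of a unimodular row using only $\E_n(R,I)$-operations, and that once a coordinate becomes a unit (in fact unipotent), the row is elementarily equivalent to $e_1$.

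First I would handle the second, easier assertion. Suppose $v_1$ is unipotent, say $v_1 = 1 + y$ with $y \in \Nil(R)$; since $\v \in \Um_n(R,I)$ we have $v_1 \equiv 1 \pmod I$, so $y \in I$. Then $v_1$ is a unit of $R$. Using the congruence $v_i \equiv 0 \pmod I$ for $i \geq 2$, the entries $v_i$ lie in $I$, so we may apply $e_{i1}(-v_i v_1^{-1})$ — wait, more carefully: we apply column operations $e_{1i}(\lambda)$ with $\lambda \in I$. Using $v_1^{-1} \in R$ and $v_i \in I$, the operation $e_{i1}(-v_1^{-1} v_i)$ lies in $\E_n(R,I)$ (the $(i,1)$ off-diagonal entry is in $I$) and clears the $i$-th coordinate against the first; doing this for $i = 2, \ldots, n$ transforms $\v$ into $(v_1, 0, \ldots, 0)$. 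Actually the standard move: right-multiplying $(v_1, v_2, \ldots, v_n)$ by $e_{1i}(\lambda)$ changes $v_i$ to $v_i + \lambda v_1$; choosing $\lambda = -v_1^{-1}v_i \in I$ (since $v_i \in I$) sends $v_i \mapsto 0$ while keeping the matrix in $\E_n(R,I)$. After clearing coordinates $2,\dots,n$ we reach $(v_1,0,\dots,0)$, and then since $v_1$ is unipotent one more operation along the first row from, say, the second coordinate: first use $e_{21}(1-v_1) \cdot$ type moves — more simply, $(v_1, 0, \dots, 0) \sim_{\E_n} (v_1, 1-v_1, 0, \dots, 0) \sim (1, 1-v_1, 0, \dots, 0) \sim (1,0,\dots,0) = e_1$, and one checks each elementary factor has its relevant off-diagonal entry in $I$ (using $1 - v_1 = -y \in I$ and $v_1 - 1 \in I$). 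Hence $\v \sim_{\E_n(R,I)} e_1$.

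For the main claim, I would reduce to the case $i = 1$ by symmetry — conjugating by the permutation-type elementary matrices that swap coordinates $1$ and $i$ preserves $\E_n(R,I)$ and $\Um_n(R,I)$, so it suffices to show $(v_1 + x, v_2, \ldots, v_n) \sim_{\E_n(R,I)} \v$ when $x \in \Nil(R) \cap I$. Since $\v \in \Um_n(R,I)$, Vaserstein's lemma (cited in the Preliminaries) gives $\w = (w_1, \ldots, w_n) \in \Um_n(R,I)$ with $\sum v_i w_i = 1$. The idea is: the change $v_1 \mapsto v_1 + x$ is realized by the column operation adding $x$ times... but $x$ is not a multiple of another coordinate a priori. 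Instead, write $x = x \cdot 1 = x \sum_{j} v_j w_j = \sum_j (x w_j) v_j$; since $x \in \Nil(R)$ and $xw_j \in I$ (as $x \in I$), each summand $(xw_j)v_j$ with $j \geq 2$ can be added to the first coordinate via $e_{j1}(x w_j) \in \E_n(R,I)$. This moves $v_1 \mapsto v_1 + \sum_{j\geq 2} x w_j v_j = v_1 + x(1 - v_1 w_1) = v_1 + x - x v_1 w_1$. So after these operations the first coordinate is $v_1 + x - x v_1 w_1$; I need to further adjust by $+ x v_1 w_1$. Now $x v_1 w_1$ is again nilpotent and in $I$, so I can iterate: at each stage the ``error'' term picks up an extra factor of the nilpotent $x$ (roughly, the error after one round involves $x^2$), so after finitely many rounds — at most the nilpotency index of $x$ — the error vanishes and the first coordinate is exactly $v_1 + x$. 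Throughout, all elementary factors used are $e_{j1}(\text{something in } I)$, hence in $\E_n(R,I)$.

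The main obstacle is making the ``iterate until the nilpotent error disappears'' argument clean: one must track that the residual discrepancy after the $m$-th round lies in $x^{m}R$ while remaining in $I$, so that it is killed once $x^N = 0$, and simultaneously check at every step that the multipliers fed into the elementary matrices lie in $I$ (which holds because $x \in I$ and $I$ is an ideal, so any $R$-multiple of $x$ stays in $I$). A cleaner packaging, which I would actually write, is to observe that $(v_1 + x, v_2, \ldots, v_n)$ and $\v$ are both unimodular in $R$ and become equal modulo $\Nil(R)$; passing to $R_{\mathrm{red}} = R/\Nil(R)$ they are literally the same row, and then one lifts the (trivial) elementary equivalence back along the nilpotent extension $R \to R_{\mathrm{red}}$ using that $\E_n(R,I) \to \E_n(R_{\mathrm{red}}, IR_{\mathrm{red}})$ is surjective and that two unimodular rows over $R$ that agree mod a nilpotent ideal and are $\E_n$-equivalent mod that ideal are $\E_n(R, I)$-equivalent — a standard lifting argument for which the relevant congruence bookkeeping reduces precisely to the finite iteration sketched above.
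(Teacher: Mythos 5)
There is a genuine gap at the central step of your proof of the first assertion, namely the claim that the iteration terminates. After the first round you have added $x(1-v_1w_1)=x\sum_{j\geq 2}v_jw_j$ to the first coordinate, and the outstanding discrepancy is $xv_1w_1$, as you say. But repeating the same decomposition with $x':=xv_1w_1$ in place of $x$ adds (up to corrections coming from the fact that the row and its dual have changed) $x'(1-v_1w_1)$ and leaves the discrepancy $x'v_1w_1=x(v_1w_1)^2$; after $m$ rounds the error is $x(v_1w_1)^m$. The factor gained at each stage is $v_1w_1$, not $x$. Since $v_j\in I$ for $j\geq 2$, one has $v_1w_1\equiv 1\pmod I$, so this factor is not nilpotent, the error never lands in $x^mR$, and the iteration does not terminate; the assertion that ``the error after one round involves $x^2$'' is simply incorrect. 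This is not a bookkeeping issue that can be patched by more careful tracking: modulo the ideal $(v_2,\ldots,v_n)$, adding $x$ to $v_1$ amounts to multiplying $v_1$ by the unipotent unit $1+xw_1$, and for a general unit such a move can genuinely change the elementary orbit (this is the Mennicke-symbol phenomenon), so no purely formal shuffling of multiples of the other coordinates can prove the statement — the nilpotency of $x$ has to enter through a real input. Your fallback ``cleaner packaging'' (pass to $R_{\mathrm{red}}$ and lift) invokes exactly such an input, namely that unimodular rows agreeing modulo a nilpotent ideal are $\E_n(R,I)$-equivalent; that statement contains the lemma as a special case, and you justify it by appealing to ``the finite iteration sketched above'', i.e.\ to the flawed step, so as written the argument is circular.

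For comparison, the paper does not attempt a direct computation: it lifts $\v$ to the excision ring $\ZZ\oplus I$, where the lift $(0,x)$ of $x$ is nilpotent, applies Rao's absolute result \cite[Lemma 1.4.2]{rao1988bass} there, and then descends via van der Kallen's excision theorem, which is what places the resulting elementary matrix in $\E_n(R,I)$; the nontrivial nilpotent input is thus outsourced to the cited lemma rather than reproved. Two smaller points: permuting coordinates does not preserve $\Um_n(R,I)$ (the congruence is to $e_1$), so the ``reduce to $i=1$ by symmetry'' step is not legitimate as stated — though your construction never needed $i=1$; and in your unipotent case the middle move, adding the second coordinate to the first, is $e_{21}(1)$, whose off-diagonal entry is not in $I$, so your claim that every factor lies in $\E_n(I)$ is false — the composite can still be placed in $\E_n(R,I)$ by appending a harmless $e_{21}(-1)$ and regrouping one factor as a conjugate of an element of $\E_n(I)$, so that part is fixable, unlike the main step.
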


\begin{proof}
Let $\tilde{\v} \in \Um_n(\ZZ \oplus I, 0 \oplus I)$ be a lift of $\v$ whose image under $\pi$ is $\v$, see \S \ref{preexr}. We observe that $\tilde{x} = (0, x)$ is a nilpotent element of $\ZZ \oplus I$. Let $\mathbf{x} \in (\ZZ \oplus I)^n$ be a row vector whose only nonzero coordinate is $\tilde{x}$ at the $i$-th position. 
Then $\pi$ sends $\tilde{\v} + \mathbf{x}$ to $(v_1, \ldots,v_{i - 1}, v_i +x, v_{i + 1}, \ldots, v_n)$. By \cite[Lemma 1.4.2]{rao1988bass}, we have  $\tilde{\v} + \mathbf{x} \sim_{\E_n(\ZZ \oplus I)} \tilde{\v}$. Therefore, the first assertion follows from the excision theorem. To prove the second assertion, let $v_1=1+n$ where $n \in I$ is a nilpotent element. By the first assertion, we have $\v \sim_{\E_{n}(R,I)} (1, v_2, \ldots,v_n )$. Since $(1, v_2, \ldots,v_n ) \sim_{\E_{n}(R,I)} e_1$, the second assertion follows. 
\end{proof}

The next lemma is proved in \cite[Corollary 2.17]{chakraborty2024relative}. When $I = R$, it is due to Roitman \cite[Lemma 1]{roitman1986stably}.

\begin{lemma} \label{22}
Let $I$ be an ideal of a ring $R$ and $\v=(v_0,\ldots,v_n) \in \Um_{n+1}(R,I), n \geq 2$. Let  $t \in I$ be a unit modulo the ideal $(v_0,\ldots,v_{n-2})$. Then $$(v_0,\ldots,v_n) \sim_{\E_{n+1}(R,I)} (v_{0},\ldots,v_{n-1},t^2v_{n}).$$
\end{lemma}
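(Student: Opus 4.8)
The plan is to work modulo the ideal $K=(v_0,\dots,v_{n-2})$, over which $t$ becomes invertible, carry out the multiplication of the last coordinate there, and lift back. Write $s\in R$ with $st\equiv 1\pmod K$; since $v_0\equiv 1\pmod I$ one may even take $s\in I$, replacing $s$ by $s-sv_0=s(1-v_0)$, which lies in $I$ and still satisfies $s(1-v_0)t\equiv st\equiv 1\pmod K$. Throughout, every elementary operation must be kept inside the relative group $\E_{n+1}(R,I)$; the key device for this is that $v_n\in I$, so that adding an $R$-multiple of the coordinate $v_n$ times another coordinate is always a relative operation.

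There are two ingredients. The first is a ``congruent-to-one'' move: if $w\in R$ satisfies $w-1=\sum_{i=0}^{n-2}e_iv_i\in K$, then $(w-1)v_n=\sum_{i=0}^{n-2}(e_iv_n)v_i$, and successively adding $(e_iv_n)v_i$ to the last coordinate --- via elementary matrices whose only off-diagonal entry $e_iv_n$ lies in $I$, and which leave $v_0,\dots,v_{n-1}$ untouched --- shows
\[
(v_0,\dots,v_{n-1},v_n)\ \sim_{\E_{n+1}(R,I)}\ (v_0,\dots,v_{n-1},\,w\,v_n).
\]
The second ingredient, which is the heart of the matter, is to realize the passage from $(\overline{v}_{n-1},\overline{v}_n)$ to $(\overline{v}_{n-1},\overline{t}^{\,2}\overline{v}_n)$ over $\overline{R}=R/K$ by elementary operations fixing the first $n-1$ (zero) coordinates: here one exploits that $(\overline{v}_{n-1},\overline{v}_n)$ is unimodular over $\overline{R}$, that the Whitehead element $\operatorname{diag}(\overline{t},\overline{t}^{\,-1})$ lies in $\E_2(\overline{R})$, and that the hypothesis $n\geq 2$ supplies a spare coordinate. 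Lifting such a chain to $R$ leaves $v_0,\dots,v_{n-2}$ literally unchanged, and the resulting discrepancy in the last two coordinates lies in $K$, hence is a combination of the genuine coordinates $v_0,\dots,v_{n-2}$ and can be cleared by further relative elementary operations. Together with the first ingredient this yields $(v_0,\dots,v_{n-1},v_n)\sim_{\E_{n+1}(R,I)}(v_0,\dots,v_{n-1},t^2v_n)$.

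The main obstacle is precisely this second ingredient --- multiplying one coordinate of a unimodular pair by the square of a unit by elementary operations that fix the remaining coordinates \emph{and} remain in $\E_{n+1}(R,I)$ rather than merely in $\E_{n+1}(R)$. To keep the relative bookkeeping under control I would first invoke the excision theorem of \S\ref{preexr} to replace $(R,I)$ by $(\ZZ\oplus I,\,0\oplus I)$; the hypothesis ``$t$ is a unit modulo $K$'' survives this reduction, with an inverse that can again be chosen inside the ideal, after which one has free use of the normality of $\E_{n+1}(\cdot,\cdot)$ in $\E_{n+1}(\cdot)$ and of the commutator identities relating elementary matrices. When $I=R$ the statement is Roitman's lemma \cite{roitman1986stably}; the relative version follows from the same degree-in-the-last-coordinate manipulations carried out over $\overline{R}$, with the relative operations tracked as indicated.
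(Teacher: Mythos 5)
Note first that the paper does not prove this lemma at all: it is imported from \cite[Corollary 2.17]{chakraborty2024relative}, the absolute case $I=R$ being Roitman's Lemma~1, so your attempt has to stand on its own. Your first ingredient is fine: since $v_n\in I$, the matrices $e_{i+1,\,n+1}(e_iv_n)$ lie in $\E_{n+1}(I)$, so multiplying $v_n$ by any $w\equiv 1\pmod K$ is indeed a relative operation. But your announced ``key device'' --- that adding an $R$-multiple of $v_n$ to another coordinate is always a relative operation --- is false at the matrix level: $e_{n+1,\,j}(\lambda)$ with $\lambda\notin I$ need not lie in $\E_{n+1}(R,I)$; only operations whose \emph{parameter} lies in $I$ (or conjugates of such) come for free, which is exactly why the relative bookkeeping is delicate.

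The genuine gap is the second ingredient, which is the entire content of the lemma and is asserted rather than proved. If your chain over $\overline{R}$ touches only the last two coordinates, what you need is $(\overline{v}_{n-1},\overline{v}_n)\sim_{\E_2(\overline{R})}(\overline{v}_{n-1},\overline{t}^{\,2}\overline{v}_n)$, and this is not a formal consequence of unimodularity plus $\mathrm{diag}(\overline{t},\overline{t}^{\,-1})\in \E_2(\overline{R})$; no argument is offered, and the classical proofs of Roitman's lemma genuinely use a third coordinate. If instead the chain uses the spare coordinate, then your lifting claim that $v_0,\dots,v_{n-2}$ stay ``literally unchanged'' is wrong: the spare coordinate returns as $v_{n-2}+\kappa$ with $\kappa\in K$, and since $K$ is generated by the very coordinates being repaired, the final ``clearing'' step neither restores it in general nor is visibly carried out inside $\E_{n+1}(R,I)$. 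The salvageable route is your last paragraph: pass to $(\ZZ\oplus I,\,0\oplus I)$ by van der Kallen's excision (exactly as the paper does for Lemma \ref{26}) and apply Roitman's absolute lemma over the excision ring, then come back through the two excision bijections. But there you assert without proof that ``$t$ is a unit modulo $K$'' survives; this needs a check, e.g.\ take $s\in I$ with $st\equiv 1\pmod K$, write $st-1=c_0v_0+\sum_{i\geq 1}c_iv_i$ and note $c_0\equiv -1\pmod I$, then rewrite the identity as $\bigl(c_0+\sum_{i\geq 1}c_iv_i\bigr)v_0+\sum_{i\geq 1}\bigl(-c_i(v_0-1)\bigr)v_i$, so that the $v_0$-coefficient is $\equiv -1\pmod I$ and all other coefficients lie in $I$ --- precisely what is required for $(0,t)$ to be invertible modulo the lifted ideal in $\ZZ\oplus I$. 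With that verification supplied, excision plus Roitman gives a complete proof; as written, both your hands-on argument and your reduction leave their central step unestablished.
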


\section{Some preparatory results}

This section aims to prove results that play a key role in the subsequent development. The following lemma facilitates various reductions and is frequently employed in later sections.
\begin{lemma}\label{IJ}
    Let $R$ be a ring and $I, J$ be ideals of $R$ such that $IJ \subset \Nil(R)$. Let overline denote the image modulo $J$ and $n\geq 3$. Then the natural map \[ \alpha: \frac{\Um_{n}(R, I)}{\E_{n}(R, I)}\rightarrow \frac{\Um_{n}(\overline{R},\overline{I})}{\E_{n}(\overline{R},\overline{I})} \] is a bijective map.
   \end{lemma}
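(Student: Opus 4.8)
The strategy is to reduce everything to nilpotent data and then invoke Lemma \ref{26} together with standard excision. First I would observe that since $IJ\subset\Nil(R)$, the element $\overline{v}$ of any $\v\in\Um_n(R,I)$ is well-defined in $\Um_n(\overline R,\overline I)$, so $\alpha$ makes sense; it is a group homomorphism when the orbit spaces carry group structure, but here I only need it as a map of pointed sets (or I can avoid that language entirely). For \emph{surjectivity}: given $\overline{\w}\in\Um_n(\overline R,\overline I)$, lift the coordinates arbitrarily to $w_1,\dots,w_n\in R$ with $w_1\equiv 1\pmod I$ (possible since the first coordinate is $1$ modulo $\overline I$, hence $w_1=1+(\text{elt of }I)+(\text{elt of }J)$; adjust the lift so that $w_1\in 1+I$). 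Then $(w_1,\dots,w_n)$ generates an ideal containing $1$ modulo $J$, i.e.\ $1-\sum w_iu_i\in J$ for suitable $u_i$; but we need unimodularity in $R$, not just modulo $J$. Here is where $IJ\subset\Nil(R)$ enters: multiply the relation by something in $I$. More carefully, since $\overline{\w}\in\Um_n(\overline R,\overline I)$, by Vaserstein's lemma (\S 2) there is $\overline{\mathbf{z}}\in\Um_n(\overline R,\overline I)$ with $\sum\overline{w_i}\,\overline{z_i}=1$; lift $z_i$ with $z_1\in 1+I$. Then $c:=\sum w_iz_i$ satisfies $c\equiv 1\pmod J$ and $c\equiv 1\pmod I$ (check the congruence mod $I$ from $z_1,w_1\in 1+I$ and the other terms lying in $I$), hence $1-c\in I\cap J$, so $c(1-c)\in IJ\subset\Nil(R)$, forcing $1-c\in\Nil(R)$ since $c$ is then a unit times a nilpotent complement — concretely $c=1-n$ with $n\in\Nil(R)\cap I$, so $c$ is a unit and $\w$ is unimodular; moreover $w_1\in 1+I$ gives $\w\in\Um_n(R,I)$. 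This proves $\alpha$ is onto.

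For \emph{injectivity}: suppose $\v,\v'\in\Um_n(R,I)$ with $\overline{\v}\sim_{\E_n(\overline R,\overline I)}\overline{\v'}$, say $\overline{\v'}=\overline{\v}\,\overline{\tau}$ for some $\tau\in\E_n(R,I)$ (elementary matrices lift, and so do products/conjugates, so one can lift the witnessing element of $\E_n(\overline R,\overline I)$ to $\E_n(R,I)$). Replacing $\v'$ by $\v'\tau^{-1}$, we reduce to the case $\overline{\v}=\overline{\v'}$, i.e.\ $\v-\v'$ has all coordinates in $J$; and since both lie in $\Um_n(R,I)$, the first coordinates lie in $1+I$, so $v_i-v_i'\in I\cap J$ for all $i$ — wait, only the first coordinate is constrained mod $I$, so in general $v_i-v_i'\in J$ only. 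To upgrade, I would argue coordinate by coordinate using Lemma \ref{26}: it suffices to show that adding an element of $J$ (landing us inside $\Um_n(R,I)$) to one coordinate does not change the $\E_n(R,I)$-orbit, \emph{when the difference is actually nilpotent}. So the crux is to replace $\v'$ by something congruent to $\v$ modulo $\Nil(R)\cap I$. Since $\v\in\Um_n(R,I)$, pick $\mathbf{z}\in\Um_n(R,I)$ with $\sum v_iz_i=1$ (Vaserstein). For each $i$, $v_i-v_i'\in J$; multiply by the unit relation: the point is that $(v_i-v_i')\cdot\big(\sum_j v_j z_j\big)=v_i-v_i'$, and distributing, each term $(v_i-v_i')v_jz_j$ — I want a factor from $I$. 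Here is the clean move: work instead in the excision ring $\ZZ\oplus I$ of \S\ref{preexr}. Lift $\v,\v'$ to $\tilde\v,\tilde\v'\in\Um_n(\ZZ\oplus I,0\oplus I)$; their coordinatewise difference lies in $0\oplus(I\cap J)$ — more precisely, since the lifts of the first coordinates both lie in $\tilde 1+(0\oplus I)$, the difference $\tilde v_i-\tilde v_i'$ lies in $0\oplus J$ for $i\ge2$ and in $0\oplus(I\cap J)$... actually the difference always lies in $0\oplus I$ componentwise by construction of the lift, and modulo $J$ it vanishes, so $\tilde v_i-\tilde v_i'\in 0\oplus(I\cap J)$. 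But in $\ZZ\oplus I$, an element $(0,x)$ with $x\in I\cap J$ satisfies $(0,x)^2=(0,x^2)$ with $x^2\in I J\subset\Nil(R)$, so $(0,x)$ is nilpotent in $\ZZ\oplus I$. Now apply the first assertion of Lemma \ref{26} repeatedly to pass from $\tilde\v'$ to $\tilde\v$ by adding nilpotent elements of $0\oplus I$ one coordinate at a time, getting $\tilde\v\sim_{\E_n(\ZZ\oplus I,\,0\oplus I)}\tilde\v'$, and push down via the excision theorem of \S\ref{preexr} to conclude $\v\sim_{\E_n(R,I)}\v'$.

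The main obstacle I anticipate is bookkeeping the passage between relative orbit sets and the excision ring, and in particular justifying that the relevant differences genuinely lie in $I\cap J$ (not merely in $J$) so that $IJ\subset\Nil(R)$ can be brought to bear — this requires choosing the lifts of unimodular rows carefully so that \emph{all} coordinates of $\v-\v'$, not just the first, land in $I$. The excision-ring formulation of \S\ref{preexr} handles exactly this: there every coordinate of a row in $\Um_n(\ZZ\oplus I,0\oplus I)$ beyond the shift lies in $0\oplus I$ by fiat, so differences automatically lie in $0\oplus I$, and reducing mod $J$ puts them in $0\oplus(I\cap J)$, whence in $\Nil(\ZZ\oplus I)$. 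Everything else is a routine application of Lemma \ref{26} and the excision theorem; no new ideas are needed beyond this reduction.
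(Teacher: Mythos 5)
Your argument is correct in substance and rests on the same two pillars as the paper's proof: lifting the elementary operations from $\E_n(\overline R,\overline I)$ to $\E_n(R,I)$, and the observation that elements of $I\cap J$ are nilpotent (their squares lie in $IJ\subset\Nil(R)$), after which Lemma \ref{26} finishes the injectivity and a "unit modulo nilpotents" argument finishes the surjectivity. Two remarks. First, your detour through the excision ring $\ZZ\oplus I$ in the injectivity step is unnecessary, and it was triggered by a misreading of the definition: $\Um_n(R,I)$ consists of rows congruent to $e_1$ modulo $I$ in \emph{every} coordinate, not just the first. Hence after replacing $\v'$ by $\v'\tau^{-1}$ with $\tau\in\E_n(R,I)$, all coordinate differences $v_i-v_i'$ automatically lie in $I\cap J\subset\Nil(R)$, and Lemma \ref{26} applies directly in $R$ — this is exactly what the paper does; your excision route is valid (for $n\geq3$) but redundant, since Lemma \ref{26} already encapsulates that excision step. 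The same misreading makes your surjectivity conclusion slightly incomplete as written: you must also choose the lifts $w_i\in I$ for $i\geq2$ (which you implicitly do when checking $c\equiv1\pmod I$) to get $\w\in\Um_n(R,I)$, not merely $w_1\in1+I$. Second, a micro-slip: $c(1-c)$ need not lie in $IJ$; the correct statement is $(1-c)^2\in IJ\subset\Nil(R)$ since $1-c\in I\cap J$, which still gives $1-c$ nilpotent and $c$ a unit. The paper's surjectivity argument is marginally cleaner — it shows the ideal $K$ generated by the lifted coordinates satisfies $K+I=K+J=R$, whence $R=(K+I)(K+J)\subset K+\Nil(R)$ and $K=R$ — but your Vaserstein-complement version is the same idea in different clothing.
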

\begin{proof} 
First we show that $\alpha$ is surjective.
We choose $\w= (\overline{1} + \overline{i_1}, \overline{i_2}, \ldots, \overline{i_n}) \in \Um_{n}(\overline{R},\overline{I})$ where $i_1, \ldots, i_n \in I$. Let  $K$ be the ideal generated by $1 + i_1, i_2,  \ldots, i_n$. Then, we have $K + I = R$. We also have $K + J = R$ since $\w \in \Um_{n}(\overline{R},\overline{I})$. 
As $IJ \subset \Nil(R)$, an easy argument yields $K = R$. It follows that $\v = (1 + i_1, i_2,  \ldots, i_n) \in \Um_{n}(R,I)$ and $\v$ is a preimage of $\w$, so $\alpha$ is surjective.

 To show that $\alpha$ is injective, we begin with $\v_1= (c_1,\ldots,c_n)$ and $\v_2= (d_1,\ldots,d_n) \in \Um_{n}(R,I)$ such that $\alpha([\v_1])=\alpha([\v_2])$. We have $\overline{\v_1} \sim_{E_n(\overline{R},\overline{I})} \overline{\v_2}$. Lifting the elementary operations, we obtain $\v_1 \sim_{\E_n(A, I)} (d_1+j_1,\ldots,d_n+j_n)$ where each $j_i$ is in $I \cap J$. As $IJ \subset \Nil(R)$, we have $I \cap J \subset \Nil(R)$, so each $j_i$ is a nilpotent element. 
By Lemma \ref{26}, we get  $(d_1+j_1,\ldots,d_n+j_n) \sim_{\E_n(A, I)} \v_2$. It follows that $\v_1 \sim_{\E_n(A, I)} \v_2$, so $[\v_1] = [\v_2]$ in $\frac{\Um_{n}(R, I)}{\E_{n}(R, I)}$. Therefore, $\alpha$ is injective. Hence, the proof follows. 
\end{proof}

If $I$ is an ideal of a ring  $R$, we define $\Gamma_{I} (R) = \cup_{n \in \NN} (0 : I^n)$. One can easily check that the image $\overline{I}$ of $I$ in the ring $R/\Gamma_I (R)$ contains a nonzero divisor if $\overline{I} \neq 0$, equivalently $I$ is not a nilpotent ideal. Moreover $I\Gamma_I(R) \subset \Nil(R)$. Therefore, Lemma \ref{IJ} implies the following.

\begin{corollary}\label{nzd}
  Let $I$ be an ideal of a ring $R$ and overline denote the image modulo $\Gamma_I(R)$.
  Then the natural map
$$\alpha: \frac{\Um_{d+1}(R, I)}{\E_{d+1}(R, I)} \longrightarrow \frac{\Um_{d+1}(\overline{R},\overline{I})}{\E_{d+1}(\overline{R},\overline{I})}$$ is a bijective map. 
\end{corollary}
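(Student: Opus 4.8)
The plan is to obtain Corollary \ref{nzd} as a direct application of Lemma \ref{IJ} with the choice $J = \Gamma_I(R)$, so the bulk of the work is verifying that the pair $(I, \Gamma_I(R))$ satisfies the hypothesis $I J \subset \Nil(R)$. First I would recall the definition $\Gamma_I(R) = \bigcup_{n \in \NN}(0 : I^n)$ and observe that each element $x \in \Gamma_I(R)$ is annihilated by some power $I^n$ of $I$. Then for $a \in I$ and $x \in \Gamma_I(R)$, pick $n$ with $x I^n = 0$; since $a \in I$, we get $a^{n} x \in I^n x = 0$, so $ax$ is nilpotent. As $\Gamma_I(R)$ is an ideal and $I$ is an ideal, $I \Gamma_I(R)$ is generated by such products $ax$, and a finite sum of nilpotent elements in a commutative ring is again nilpotent (or, more simply, $I\Gamma_I(R)$ is locally nilpotent, hence contained in $\Nil(R)$ because $R$ is Noetherian); in any case $I \Gamma_I(R) \subset \Nil(R)$.

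With this verification in hand, I would apply Lemma \ref{IJ} with $n = d + 1$ (note $d \geq 2$ in the relevant setting, so $n \geq 3$), taking $J = \Gamma_I(R)$ and letting the overline denote reduction modulo $J$. Lemma \ref{IJ} then immediately gives that the natural map
\[
\alpha: \frac{\Um_{d+1}(R, I)}{\E_{d+1}(R, I)} \longrightarrow \frac{\Um_{d+1}(\overline{R}, \overline{I})}{\E_{d+1}(\overline{R}, \overline{I})}
\]
is bijective, which is exactly the assertion of the corollary. Since the paragraph preceding the statement already records the facts that $\overline{I}$ contains a nonzero divisor when $I$ is not nilpotent and that $I\Gamma_I(R) \subset \Nil(R)$, the proof is essentially a one-line invocation of Lemma \ref{IJ}.

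The main (and only real) obstacle is the containment $I\Gamma_I(R) \subset \Nil(R)$, and even that is routine: the subtlety is purely that $\Gamma_I(R)$ is a union of ascending annihilators $(0 : I^n)$ rather than a single $(0 : I^n)$, so one must argue element-by-element and then use that $R$ is Noetherian (so this union stabilizes, giving $\Gamma_I(R) = (0 : I^N)$ for some $N$, whence $I^N \Gamma_I(R) = 0$ and $I\Gamma_I(R)$ is nilpotent as an ideal). I expect no genuine difficulty here; the corollary is stated precisely so that the reduction "assume $I$ contains a nonzero divisor" can be invoked freely in later arguments.
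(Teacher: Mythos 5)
Your proposal is correct and is essentially the paper's own argument: the paper likewise notes $I\Gamma_I(R)\subset \Nil(R)$ (each $ax$ with $xI^n=0$ satisfies $(ax)^n=0$) and then invokes Lemma \ref{IJ} with $J=\Gamma_I(R)$ and $n=d+1\geq 3$. Your extra remarks (Noetherian stabilization of the annihilators, the nonzero divisor in $\overline{I}$) match the paper's surrounding discussion and introduce no gap.
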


The following lemma constitutes an essential component in the proof of Theorem \ref{maint}.

\begin{lemma}\label{411}
    Let $I, J$ be ideals of $R$ such that $IJ=0$. Then the natural map \[ \frac{\Um_{n}(R,I+J)}{\E_{n}(R,I+J)}\rightarrow \frac{\Um_{n}(\overline{R},\overline{I})}{\E_{n}(\overline{R},\overline{I})} \times \frac{\Um_{n}(\overline{R},\overline{J})}{\E_{n}(\overline{R},\overline{J})}, n \geq 3 \] is bijective. Here, overline denotes the image modulo $J$ in the first factor and the image modulo $I$ in the second factor.
\end{lemma}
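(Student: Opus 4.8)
The plan is to build the inverse map explicitly and then check it is a two-sided inverse, rather than attempting surjectivity and injectivity separately. The Chinese-Remainder-type isomorphism $R/(I \cap J) \cong R/I \times_{R/(I+J)} R/J$ is not quite what is needed since we want to compare orbit spaces of the \emph{relative} unimodular groups with respect to $I$, $J$ and $I+J$; instead I would exploit the hypothesis $IJ = 0$ directly. Write $\overline{R}_1 = R/J$ with $\overline{I}$ its image of $I$, and $\overline{R}_2 = R/I$ with $\overline{J}$ the image of $J$. The two reduction maps $R \to \overline{R}_1$ and $R \to \overline{R}_2$ send $I+J$ into $\overline{I}$ and $\overline{J}$ respectively (note $I$ dies modulo $I$ and $J$ dies modulo $J$), so they induce the asserted map $\alpha$ on orbit spaces; by \S\ref{pregst} (after noting that all the spaces in sight carry group structures, or simply working at the level of sets of orbits as in the statement) this is well-defined.

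First I would prove surjectivity. Given orbit representatives $\w_1 = (\overline{1}+\overline{i_1}, \overline{i_2}, \ldots, \overline{i_n}) \in \Um_n(\overline{R}_1, \overline{I})$ (lift the $i_k \in I$) and $\w_2 = (\overline{1}+\overline{j_1}, \overline{j_2}, \ldots, \overline{j_n}) \in \Um_n(\overline{R}_2, \overline{J})$ (lift the $j_k \in J$), set $\v = (1 + i_1 + j_1, i_2 + j_2, \ldots, i_n + j_n) \in R^n$. Then $\v \equiv e_1 \pmod{I+J}$. To see $\v$ is unimodular: let $K$ be the ideal generated by its entries. Reducing modulo $J$ gives exactly the entries of $\w_1$, so $K + J = R$; reducing modulo $I$ gives the entries of $\w_2$, so $K + I = R$. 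Hence $R = (K+I)(K+J) \subset K + IJ = K + 0 = K$, using $IJ = 0$. So $\v \in \Um_n(R, I+J)$, and it clearly maps to $([\w_1], [\w_2])$.

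For injectivity, suppose $\v_1 = (c_1, \ldots, c_n)$ and $\v_2 = (d_1, \ldots, d_n) \in \Um_n(R, I+J)$ have $\alpha([\v_1]) = \alpha([\v_2])$. Then $\overline{\v_1} \sim_{\E_n(\overline{R}_1, \overline{I})} \overline{\v_2}$ and $\overline{\v_1} \sim_{\E_n(\overline{R}_2, \overline{J})} \overline{\v_2}$. Lifting the first string of elementary operations (each $\E_n(\overline{R}_1, \overline{I})$-generator lifts to an $\E_n(R, I+J)$-generator, since $\overline{I}$ is the image of $I \subset I+J$), we get $\v_1 \sim_{\E_n(R, I+J)} \v_2'$ with $\v_2' \equiv \v_2 \pmod{J}$ and $\v_2' \equiv \v_1 \pmod{I}$ coordinatewise — actually I should be slightly careful: lifting gives $\v_2' = \v_2 + \mathbf{j}$ with $\mathbf{j} \in J^n$, but since both $\v_1, \v_2 \in \Um_n(R, I+J)$ and the lifted operations lie in $\E_n(R, I+J)$ we retain $\v_2' \in \Um_n(R, I+J)$. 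Now also $\overline{\v_2'} = \overline{\v_1} \sim \overline{\v_2}$ modulo $I$, and $\overline{\v_2'} = \overline{\v_2}$ modulo $I$ would follow if I instead lift the \emph{second} relation. The cleanest route: having reduced to the case $\v_1 \equiv \v_2 \pmod J$ (coordinatewise difference in $J^n$), apply the hypothesis-driven reduction modulo $I$: since $\v_1 \sim_{\E_n(\overline{R}_2,\overline{J})} \v_2$ and we want them equivalent over $R$, lift that string to get $\v_1 \sim_{\E_n(R,I+J)} \v_2 + \mathbf{i}$ with $\mathbf{i} \in I^n$; combined with the $J^n$-adjustment, $\v_1 \sim_{\E_n(R,I+J)} \v_2 + \mathbf{i} + \mathbf{j}$ where each correction entry lies in $I + J$. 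The entries of $\mathbf{i}$ being in $I$ and of $\mathbf{j}$ in $J$ with $IJ = 0$, I would like to kill the corrections by Lemma \ref{26} — but that requires nilpotence of the correction, which is not automatic here.

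I expect the genuine obstacle to be exactly this last point: matching the two separate chains of elementary equivalences into a single one over $R$. The right fix is to do the lifting asymmetrically and use that $\v \mapsto (\v \bmod J, \v \bmod I)$ is injective on the relevant fibers. Concretely: after arranging $\v_1 \sim_{\E_n(R,I+J)} \v_2'$ with $\v_2' \equiv \v_2 \pmod J$, it remains to show $\v_2' \sim_{\E_n(R,I+J)} \v_2$; but $\v_2' - \v_2 \in J^n$, and reducing the relation $\overline{\v_1} \sim \overline{\v_2}$ modulo $I$ together with $\v_1 \sim \v_2'$ gives $\overline{\v_2'} \sim_{\E_n(\overline{R}_2,\overline{J})} \overline{\v_2}$; since $\v_2' - \v_2 \in J^n \subset (I+J)^n$ and modulo $I$ the two are elementarily equivalent, we lift \emph{that} equivalence to $\v_2' \sim_{\E_n(R,I+J)} \v_2 + \mathbf{i}$, $\mathbf{i} \in I^n$; now $\v_2' - (\v_2 + \mathbf{i}) \in J^n$ — wait, that reintroduces a $J$ term. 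The resolution is that after one full round-trip the residual correction lies in $I^n \cap J^n$-type combinations, and $I^n$ times $J$ being zero forces, via the excision-ring trick of Lemma \ref{26} applied with the nilpotent (indeed zero-square) correction, the desired equivalence; I would present this as: the composite correction $\mathbf{e} = \v_1\alpha - \v_2$ (for the assembled $\alpha \in \E_n(R,I+J)$) has each entry in $I+J$ and, crucially, satisfies that its $I$-part $\mathbf{e}_I$ and $J$-part $\mathbf{e}_J$ multiply to give squares landing in $IJ = 0$, so each entry of $\mathbf{e}$ can be successively removed by Lemma \ref{26} — with the caveat that one must observe the entries are in fact additively split as (element of $I$) + (element of $J$) and $I \cap J$, being annihilated by both $I$ and $J$ hence by $I+J$, is contained in $\Nil(R)$ since $(I\cap J)^2 \subset IJ = 0$. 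That last observation, $(I \cap J)^2 = 0$, is the key that makes Lemma \ref{26} applicable and closes the argument exactly as in the proof of Lemma \ref{IJ}; I would structure the final writeup to isolate it up front and then mirror the \ref{IJ} proof verbatim.
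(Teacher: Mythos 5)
Your surjectivity argument is correct (and is essentially the unimodularity computation the paper also uses: $R=(K+I)(K+J)\subset K+IJ=K$). The genuine gap is in injectivity, at exactly the point you flagged and then waved away. After the first lift you have $\v_1\alpha=\v_2+\mathbf{j}$ with $\alpha\in \E_n(R,I)$, $\mathbf{j}\in J^n$, and after lifting the mod-$I$ equivalence you get $\beta\in \E_n(R,J)$ with $(\v_2+\mathbf{j})\beta=\v_2+\mathbf{i}$, $\mathbf{i}\in I^n$. Your closing claim --- that the composite correction merely splits additively as (element of $I$) $+$ (element of $J$) and can then be removed entry by entry via Lemma \ref{26} --- fails as stated: Lemma \ref{26} requires each entry to be \emph{nilpotent}, and an element $i+j$ with $IJ=0$ need not be nilpotent (take $R=k\times k$, $I=k\times 0$, $J=0\times k$, where $I+J=R$). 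What actually closes the argument, and what you never verify, is that the residual correction lies in $(I\cap J)^n$: since $\beta\in \E_n(R,J)\subset \SL_n(R,J)$ we have $\beta\equiv I_n \pmod{J}$, so $(\v_2+\mathbf{j})\beta\equiv \v_2+\mathbf{j}\equiv \v_2 \pmod{J}$, whence $\mathbf{i}\in I^n\cap J^n$; only then are the entries square-zero (because $(I\cap J)^2\subset IJ=0$), so Lemma \ref{26} applies and $\v_1\sim_{\E_n(R,I+J)}\v_2+\mathbf{i}\sim_{\E_n(R,I+J)}\v_2$. You stop at ``that reintroduces a $J$ term'' and substitute the unproved phrase ``$I^n\cap J^n$-type combinations,'' so the injectivity half is not established in your writeup, even though your strategy can be repaired as above.

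It is also worth noting that the paper avoids this merging-of-two-elementary-chains problem altogether. It defines the diagonal map $\frac{\Um_n(R,I)}{\E_n(R,I)}\times\frac{\Um_n(R,J)}{\E_n(R,J)}\rightarrow\frac{\Um_n(R,I+J)}{\E_n(R,I+J)}$, $([(1+i_1,\ldots,i_n)],[(1+j_1,\ldots,j_n)])\mapsto[(1+i_1+j_1,\ldots,i_n+j_n)]$, checks well-definedness using $IJ=0$ (a row with entries in $J$ is fixed by $\alpha=I_n+\alpha'$, $\alpha'\in\M_n(I)$), proves it surjective by the same unimodularity computation you gave, and then gets all injectivity for free because its composite with the natural map is the product of the two bijections of Lemma \ref{IJ}. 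That factorization through the already-proved Lemma \ref{IJ} is precisely what sidesteps the delicate step where your direct attack got stuck; either complete your argument with the $(I\cap J)^n$ verification above, or switch to the paper's commutative-triangle argument.
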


\begin{proof}
We shall construct a map $\frac{\Um_n(R,I)}{\E_n(R,I)}\times\frac{\Um_n(R,J)}{\E_n(R,J)} \rightarrow \frac{\Um_n(R,I+J)}{\E_n(R,I+J)}$ which yields the following commutative diagram : 

$$
   \xymatrix{
\frac{\Um_n(R,I+J)}{\E_n(R,I+J)}  \ar[r]^{} & \frac{\Um_{n}(\overline{R},\overline{I})}{\E_{n}(\overline{R},\overline{I})}\times \frac{\Um_{n}(\overline{R},\overline{J})}{\E_{n}(\overline{R},\overline{J})}\\
  & \ar[u]_{\cong~ \text{by Lemma} \  \ref{IJ}}\ar[lu] \frac{\Um_{n}({R},{I})}{\E_{n}({R},{I})}\times \frac{\Um_{n}({R},{J})}{\E_{n}({R},{J})}.}
$$
where the horizontal map is  the given natural map, and the vertical  map is given by the pair of natural
bijections defined on different factors, see  Lemma \ref{IJ}. 

Let $(1+i_1, i_2, \ldots, i_n) \in \Um_n(R,I)$ and $(1+j_1, \ldots, j_n) \in \Um_n(R,J)$. We consider the ideal $T$ generated by $(1+i_1+j_1), (i_2+j_2), \ldots, (i_n+j_n)$.
Then $T+I= R$, $T+J=R$ and as $IJ= 0$ we have $T= R$. Consequently $(1+i_1+j_1,i_2+j_2,\ldots,i_n+j_n) \in \Um_n(R,I+J)$.
 
 We define the diagonal map 
\[\frac{\Um_n(R,I)}{\E_n(R,I)}\times\frac{\Um_n(R,J)}{\E_n(R,J)} \rightarrow \frac{\Um_n(R,I+J)}{\E_n(R,I+J)}\] as 
\[([(1+i_1,\ldots,i_n)],[(1+j_1,\ldots,j_n)])\mapsto [(1+i_1+j_1,i_2+j_2,\ldots,i_n+j_n)].\]

We prove that the map is well-defined. 
Let $[(1+i_1,\ldots,i_n)] = [(1+i_1',\ldots,i_n')] \in \frac{\Um_n(R,I)}{\E_n(R,I)}$ and $[(1+j_1,\ldots,j_n)] \in \frac{\Um_n(R,J)}{\E_n(R,J)}$. Then there is an $\alpha \in \E_n(R,I)$ such that $(1+i_1, \ldots, i_n)\alpha =(1+i_1', i_2',\ldots ,i_n')$. 
Now $\alpha$ can be written as $\alpha = I_n + \alpha'$ for some $\alpha' \in \M_n(I)$. As $IJ = 0$, we have $(j_1,j_2,\ldots,j_n) \alpha' = 0$, consequently $(j_1,j_2,\ldots,j_n) \alpha = (j_1,j_2,\ldots,j_n)$. Therefore, 
\[(1+i_1+j_1,i_2+j_2,\ldots,i_n+j_n) \alpha = (1+i_1'+j_1, i_2' + j_2, \ldots, i_n' + j_n).\]
 One checks similarly that the image is independent of the choice of representatives of the second factor. 
 
As the vertical map in the above diagram is injective, so is the diagonal map. If $$(1 + i_1 + j_1, i_2 + j_2, \ldots, i_n + j_n) \in \Um_n(R ,I+J),$$ then $(\bar{1} + \bar{i_1}, \bar{i_2}, \ldots, \bar{i_n}) \in \Um_n(\overline{R} , \overline{I})$ where overline denotes the image modulo $J$. The argument as in Lemma \ref{IJ} shows that $(1 + i_1, i_2, \ldots, i_n) \in \Um_n(R ,I)$. Similarly $(1 + j_1, j_2, \ldots,  j_n) \in \Um_n(R , J)$. Therefore, the diagonal map is surjective and consequently, bijective. The vertical map is already bijective. Hence, the horizontal natural map is also bijective. 
\end{proof}

\begin{remark}
The above lemma holds even for $IJ \subset \Nil R$.
\end{remark}

\begin{definition}
The leading coefficient of a polynomial $f \in R[t]$ is denoted by $l(f)$. The polynomial $f$  is called an $a$-monic polynomial for $a \in R \setminus \{0\}$ if $l(f) = a^n$ for some $n \in \NN$.
\end{definition} 

\begin{lemma}\label{amonic}
Let $R$ be a ring of dimension $d$ $(\geq 2)$ and $A=\oplus I_it^i$ be a graded $R$-subalgebra of $R[t]$ such that $I_1 \neq 0$. Suppose $\v=(v_0,\ldots,v_{d})\in \Um_{d+1}(A,aA)$ for some $a \in I_1 \cap \rad(R)$, $a \neq 0$. Then we have $\v \sim_{\E_{d+1}(A,aA)}(u_0,\ldots,u_{d})$ where  $u_0$ is an $a$-monic polynomial in $A$. 
\end{lemma}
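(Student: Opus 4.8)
The plan is to localize at $a$, where the algebra becomes a polynomial ring of strictly smaller residue dimension, carry out a Suslin--Roitman style monic-polynomial reduction there, and then descend the result back to $A$.

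\emph{Setup and localization.} Since $\v\in\Um_{d+1}(A,aA)$ I may write $v_0=1+aw_0$ and $v_i=aw_i$ for $1\le i\le d$, with $w_0,\dots,w_d\in A$. Two properties of $A_a$ drive the argument. First, $A_a=R_a[t]$: localizing the inclusion $A\subseteq R[t]$ gives $A_a\subseteq R_a[t]$, while $R_a=(A_0)_a\subseteq A_a$, and because $a\in I_1$ the element $at$ lies in the degree-one component of $A$, so $t=(at)/a\in A_a$ and hence $R_a[t]\subseteq A_a$. Second, $\dim A_a\le d$: as $a\in\rad(R)$, any prime of $R$ avoiding $a$ is properly contained in a maximal ideal and so has height at most $d-1$; thus $\dim R_a\le d-1$, and since $R_a$ is Noetherian, $\dim A_a=\dim R_a[t]=\dim R_a+1\le d$.

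\emph{Degree reduction over $A_a$.} Let $\bar{\v}\in\Um_{d+1}(R_a[t])$ be the image of $\v$. Its length $d+1$ is at least $\dim(R_a)+2$, so I would invoke Suslin's monic-polynomial lemma (the first step of Roitman's degree-reduction argument, \cite{suslin1977structure,roitman1986stably}) to get $\bar\sigma\in\E_{d+1}(R_a[t])$ for which the first coordinate of $\bar{\v}\,\bar\sigma$ is a polynomial in $t$ that is monic over $R_a$. Clearing denominators, this polynomial equals $a^{-N}u_0$ for some $N\ge0$ and some $u_0\in A$ whose leading coefficient is precisely $a^{N}$; so $u_0$ is an $a$-monic polynomial in $A$.

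\emph{Descent to $A$.} It then remains to realize $\bar\sigma$, up to an $\E_{d+1}(A,aA)$-operation, by operations over $A$ that carry the first coordinate of $\v$ to $u_0$. Writing $\bar\sigma$ as a product of elementary generators $e_{ij}(a^{-m}\lambda)$ with $\lambda\in A$, one exploits that each entry $v_j=aw_j$ ($j\ge1$) absorbs one factor of $a$ under multiplication: this lifts the denominator-one part of $\bar\sigma$ to an honest $\E_{d+1}(A,aA)$-operation, while the higher powers of $a$ in the denominators get absorbed by an Artin--Rees type estimate for the ideals $(v_1,\dots,v_d)$ and $aA$ (after replacing $A$, if necessary, by a finitely generated graded subalgebra containing the relevant elements). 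One then obtains $\sigma\in\E_{d+1}(A,aA)$ with $\v\sigma$ congruent, modulo a high power of $a$, to a denominator-cleared form of $\bar{\v}\,\bar\sigma$, and a final $\E_{d+1}(A,aA)$-correction of the first coordinate by an element of $aA\cdot(v_1,\dots,v_d)$ replaces it exactly by $u_0$, giving $\v\sim_{\E_{d+1}(A,aA)}(u_0,u_1,\dots,u_d)$.

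\emph{Main obstacle.} The delicate point is precisely this descent. Over $A_a$ one inverts $a$ at will, but the conclusion lives in the \emph{relative} orbit space over $A$, so one must show that no denominator introduced by $\bar\sigma$ survives modulo $\E_{d+1}(A,aA)$: the factors of $a$ already present in $v_1,\dots,v_d$ have to be balanced against the denominators of $\bar\sigma$, and the residual error kept under control by the Artin--Rees estimate. This bookkeeping with elementary operations in the relative setting, rather than the (classical) degree reduction over the polynomial ring $R_a[t]$, is where the real difficulty lies.
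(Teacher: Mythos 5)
Your localization step is fine ($A_a=R_a[t]$ because $at\in A$, and $\dim R_a\le d-1$ since $a\in\rad(R)$), and Suslin's monic reduction over $R_a[t]$ is available at this length. But the proof stands or falls with the descent step, and that step is not proved — you yourself flag it as the main obstacle. The mechanism you sketch does not work as stated: a generator $e_{ij}(a^{-m}\lambda)$ occurring in $\bar\sigma$ with $m\ge 2$ adds $a^{1-m}\lambda w_j$ to a coordinate, so the single factor of $a$ available in $v_j=aw_j$ cannot clear the denominator; worse, any operation of $\bar\sigma$ that adds a multiple of the first coordinate $v_0=1+aw_0$ (a unit modulo $aA$, not divisible by $a$) to the others introduces denominators with nothing at all to absorb them, and after the first such operation the row no longer has the form "$e_1$ plus $aA$-entries" on which your bookkeeping relies. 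Elementary matrices over $A_a$ do not lift to $A$ in general, and the "Artin--Rees type estimate" you invoke (presumably in the spirit of Lemma \ref{43}) controls intersections $I^nA\cap JR[t]$; it says nothing about denominators of elementary generators, and no precise statement is formulated that would convert $\bar\sigma$, even after correction, into an element of $\E_{d+1}(A,aA)$ carrying the first coordinate of $\v$ to an $a$-monic element of $A$. As written, your argument proves the conclusion only over $A_a$.

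The paper's proof avoids descent altogether, and this is the idea your proposal is missing. Localize at the multiplicative set $T$ of all $a$-monic polynomials of $A$, so that $T^{-1}A=S^{-1}R_a[t]$ has dimension at most $d-1$, and apply Theorem \ref{AG}: its statement already contains the denominator control you need, since it produces $c_1,\dots,c_d\in aA$ (elements of the subring, lying in $aA$) with $(v_1+c_1v_0,\dots,v_d+c_dv_0)\in\Um_d(T^{-1}A)$. Consequently the ideal $(v_1+c_1v_0,\dots,v_d+c_dv_0)A$ contains an $a$-monic polynomial $h$, hence also $a(at)^mh(t)$ for every $m$, and adding $a(at)^mh(t)$ with $m>\deg v_0$ to $v_0$ is an honest $\E_{d+1}(A,aA)$-operation (here $at\in A$ is used again) which makes the first coordinate $a$-monic. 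So no elementary matrix is ever transported from the localization to $A$; only an element of the ideal generated by the other coordinates is added to $v_0$. To repair your write-up, replace the attempted lift of $\bar\sigma$ by an application of Theorem \ref{AG} in this form, or else supply a precise, proved lifting statement; the informal bookkeeping currently offered is exactly the unproved heart of the lemma.
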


\begin{proof} 
We begin with an observation that $\dim(R_a)\leq d-1$. Let $T$ be the multiplicatively closed subset of $A$ generated by all $a$-monic polynomials in $A$. Then $T^{-1} A = S^{-1} R_a[t]$ where $S$ is the multiplicatively closed subset of $R_a[t]$ generated by all monic polynomials in $R_a[t]$.  It follows that $\dim T^{-1} A = \dim S^{-1} R_a[t] = \dim R_a \leq d - 1$. 
 
The image of $\v$ under the localization  is in $\Um_{d+1}(T^{-1} A)$. By Theorem \ref{AG}, we can find $c_1,c_2,\ldots,c_d$ in $aA$ such that $(v_1+c_1v_0, v_2 + c_2v_0, \ldots, v_d + c_dv_0) \in \Um_{d}(T^{-1}A)$. It follows that the ideal generated by $v_1+c_1v_0, v_2 + c_2v_0, \ldots,v_d + c_dv_0$ contains an $a$-monic polynomial, say $h(t)$. In particular, it contains $a(at)^mh(t)$ for all  $m \in \NN$.

We  choose $m > \deg(v_0)$ such that $v_0 + a(at)^mh(t)$ is $a$-monic. Now the result follows once we observe that $\v \sim_{\E_{d+1}(A,aA)} (v_0, v_1+c_1v_0, \ldots, v_d + c_dv_0) \sim_{\E_{d+1}(A,aA)} (v_0 + a(at)^mh(t), v_1+c_1v_0, \ldots, v_d + c_dv_0)$. 
\end{proof}

The next two lemmas adapt the technique from Roitman's method to reduce the degrees of coordinates in unimodular rows over polynomial rings, see \cite[Theorem 5]{roitman1986stably}. This approach involves systematically lowering the degrees of all but one coordinate by applying elementary operations akin to Euclid's algorithm.

\begin{lemma}\label{RT0}
Let $A= \oplus_{i\geq 0} I_it^i$ be a graded $R$-subalgebra of $R[t]$ such that $I_1 \neq 0$ and
$$\v = (v_0,v_1,\ldots,v_d) \in \Um_{d + 1}(A, aA), d \geq 2 $$ for some $a \in I_1 \setminus \{0\}$. Assume that $v_0$ is $a$-monic, the ideal generated by the coefficients of $v_2, \ldots, v_d$ in $R_a$  is equal to the ideal $R_a$ and $m = \max \{\deg v_2, \ldots, \deg v_d\}$. Then by applying $\E_{d + 1}(A, aA)$-operations on $\v$, we may modify $v_1, \ldots, v_d$ such that $v_1$ is $a$-monic, $\deg v_1 \leq m$ and $\max \{\deg v_2, \ldots, \deg v_d\} \leq m -1$. 
\end{lemma}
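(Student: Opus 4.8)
The plan is to run a Euclid-style reduction on the non-distinguished coordinates, using the $a$-monic polynomial $v_0$ as the pivot against which we reduce degrees, but performing the division in $R_a[t]$ rather than in $R[t]$ and clearing denominators afterwards so that the elementary operations actually land in $\E_{d+1}(A,aA)$. Since $v_0$ is $a$-monic, say $l(v_0)=a^n$ and $\deg v_0 = \delta$, the element $a^n$ is a unit in $R_a$ and $v_0$ becomes a monic polynomial (after scaling by a unit) in $R_a[t]$; division with remainder against it is therefore available over $R_a$. I would first ensure $\deg v_1 \le m$: if $\deg v_1 > m \ge \max\{\deg v_2,\dots,\deg v_d\}$, I reduce $v_1$ modulo $v_0$ over $R_a$, i.e. replace $v_1$ by $v_1 - q(t) v_0$ where $q \in R_a[t]$; multiplying $q$ by a suitable power $a^N$ of $a$ (large enough to clear all denominators occurring in $q$, and noting $a \in I_1$ so $aA \supset$ the relevant shifts) gives an honest elementary operation $e_{21}(-a^N q')$ with $a^N q' \in aA$, at the cost of first scaling $v_1$ by the unit $a^{N}$ — which by Lemma \ref{22} (taking the unit $t := a^N \in aA$, a unit modulo the ideal generated by the earlier coordinates) only changes $v_1$ up to an $\E_{d+1}(A,aA)$-equivalence after an extra squaring, or more simply is absorbed by composing with the inverse scaling on another coordinate. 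After this step $\deg v_1 \le \delta - 1$, and by possibly adding an $a$-monic multiple of a high-degree element (as in the last line of Lemma \ref{amonic}) I can also arrange $\deg v_1 \le m$ while keeping it $a$-monic.

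The heart of the argument is then to lower $\max\{\deg v_2,\dots,\deg v_d\}$ from $m$ to $m-1$ using $v_1$, which I will have arranged to be $a$-monic. Write $v_i = c_i t^m + (\text{lower order})$ for $i = 2,\dots,d$, where now the leading coefficients $c_i$ (in $R_a$) generate the unit ideal of $R_a$ by hypothesis. Since $v_1$ is $a$-monic of degree $\le m$, after scaling by a power of $a$ it is monic in $R_a[t]$ of some degree $e \le m$; I use $v_1$ to kill the top coefficient of each $v_i$ in turn. Concretely, over $R_a$ one writes $c_i = \sum_j \lambda_{ij} d_j$ where $d_j$ are coefficients of $v_1$ with $\sum \lambda_{ij} d_j = c_i$; a single elementary operation $v_i \mapsto v_i - t^{m-e}(\text{unit-adjusted }v_1)$ combined with column operations among the coordinate positions reduces $\deg v_i$ below $m$, while the genuinely needed coefficients lie in $aA$ because $a \in I_1$ forces every degree-$\ge 1$ piece of $A$ that we touch to be divisible by an element of the graded pieces $I_j$, and multiplying through by enough powers of $a$ puts everything in $aA$. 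Iterating over $i = 2,\dots,d$ brings $\max\{\deg v_2,\dots,\deg v_d\}$ down to $m-1$; throughout, $v_0$ is untouched and $v_1$ is restored to $a$-monic form with $\deg v_1 \le m$ by one more adjustment if a scaling was applied.

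The main obstacle is bookkeeping the passage between $R_a[t]$ and $A$: the natural Euclidean operations live over $R_a$, but an element of $R_a$ is of the form $r/a^k$, and to promote $r/a^k \in R_a$ to a coefficient usable in an $\E_{d+1}(A,aA)$ operation one must (i) clear the denominator $a^k$, (ii) ensure the resulting polynomial coefficient actually lies in the correct graded piece $I_j$ of $A$ — here the hypothesis $I_1 \ne 0$ together with $a \in I_1$ is exactly what guarantees $a^j t^j \in A$ and hence that $aA$ is "large enough" in every degree — and (iii) track that the compensating scalings by powers of $a$ can be undone within $\E_{d+1}(A,aA)$, which is where Lemma \ref{22} and the perfect-square trick of (\ref{gop1}) enter. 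I expect steps (i)–(iii) to be routine but notationally heavy; the conceptual content is entirely the Roitman degree-reduction scheme, and no new idea beyond careful denominator-clearing is required. Once both conclusions — $v_1$ is $a$-monic with $\deg v_1 \le m$, and $\max\{\deg v_2,\dots,\deg v_d\} \le m-1$ — are achieved, the lemma follows.
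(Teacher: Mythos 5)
There is a genuine gap, and it sits exactly at the crux of the lemma: making $v_1$ simultaneously $a$-monic \emph{and} of degree at most $m$. Your reduction divides $v_1$ by $v_0$ over $R_a$, which only yields $\deg v_1 < \deg v_0$; but $\deg v_0$ may be far larger than $m$, and the remainder of such a division is in general not $a$-monic. Your proposed repair --- ``adding an $a$-monic multiple of a high-degree element as in the last line of Lemma \ref{amonic}'' --- cannot help, since adding a high-degree $a$-monic element raises the degree rather than pinning it below $m$. The only way to achieve both conclusions is to have an $a$-monic polynomial of degree at most $m$ lying in the ideal generated by the \emph{other} coordinates, and producing it is precisely where the hypothesis that the coefficients of $v_2,\ldots,v_d$ generate $R_a$ enters: since $v_0$ is monic over $R_a$, \cite[Lemma 11.1]{vasersteinSuslin1976serre} gives a monic polynomial of degree at most $m$ in $(v_0,v_2,\ldots,v_d)R_a[t]$, hence (as $A_a=R_a[t]$) an $a$-monic $u$ of degree at most $m$ in $(v_0,v_2,\ldots,v_d)A$. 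One then Euclid-reduces $v_1$ against $u$ (after multiplying $v_1$ by a suitable even power of $a$ via Lemma \ref{22}, legitimate because $v_0\equiv 1 \pmod{aA}$, and using $at\in A$ to keep the multipliers in $aA$) until $\deg v_1<\deg u$, and finally adds $au$ to $v_1$ to make it $a$-monic of degree $\deg u\le m$. Your proposal never performs this construction, so the first conclusion is not established; note also that you misquote the hypothesis as unimodularity of the \emph{leading} (degree-$m$) coefficients of $v_2,\ldots,v_d$, whereas it concerns all their coefficients, and its role is the low-degree monic construction above, not the final step.

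Your second stage is essentially sound once $v_1$ is known to be $a$-monic of degree at most $m$: killing the top coefficients of $v_2,\ldots,v_d$ by subtracting $(at)$-shifted multiples of $v_1$ (after an even-power-of-$a$ scaling via Lemma \ref{22}) is exactly how the paper finishes, and the extra machinery you invoke there --- writing each leading coefficient as a combination of coefficients of $v_1$ --- is unnecessary, since the leading coefficient of $v_1$ is a power of $a$ and hence a unit in $R_a$. So the denominator-clearing bookkeeping you anticipate is indeed routine; what is missing is the Suslin--Vaserstein step that manufactures the degree-$\le m$ $a$-monic pivot, without which the lemma's conclusion about $v_1$ cannot be reached.
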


\begin{proof}
By the given hypothesis, $v_0$ is monic in $R_a[t]$, the coordinates $v_2, \ldots, v_d$ have degrees at most $m$  and the  coefficients of $v_2, \ldots, v_d$ generate the ideal $R_a$. By \cite[Lemma 11.1]{vasersteinSuslin1976serre}, the ideal $(v_0, v_2, \ldots, v_d)R_a[t]$ contains a monic polynomial of degree at most $m$. Since $A_a = R_a[t]$, it follows that the ideal $(v_0, v_2, \ldots, v_d)A$ contains an $a$-monic polynomial say $u$ of degree at most $m$. Let  $l(u) = a^M$. 

We claim that by applying $\E_{d + 1}(A, aA)$-operations on $\v$, we may reduce the degree of $v_1$ to have $\deg v_1 < \deg u \leq m$.  If $\deg v_1 \geq \deg u$, we choose $M_0 \in \NN$ satisfying $2M_0 > M + \deg v_1 - \deg u$ and observe that
\begin{align}\label{deq1}
\v  &\sim_{\E_{d+1}(A,aA)}(v_0, a^{2M_0}v_{1}, \ldots, v_d) \ \text{by Lemma \ref{22} since $v_0 \equiv 1 \vpmod {aA}$}\\
(v_0, a^{2M_0}v_{1}, \ldots, v_d) & \sim_{\E_{d+1}(A,aA)} (v_0, a^{2M_0}v_1 -a^{2M_0 - M - \deg v_1 + \deg u} l(v_{1}) (at)^{\deg v_1 - \deg u}u(t), v_2 ,\ldots,v_d). \nonumber
\end{align}
For the second $ \sim_{\E_{d+1}(A,aA)}$ we use that $at \in A$.  
We notice that $$\deg [a^{2M_0}v_1 -a^{2M_0 - M - \deg v_1 + \deg u}l(v_{1}) (at)^{\deg v_1 - \deg u}u(t)] < \deg v_1.$$ 
Therefore, by applying suitable $\E_{d + 1}(A, aA)$-operations iteratively, we achieve our claim. We recall that $u \in (v_0, v_2, \ldots, v_d)A$. 
By applying further $\E_{d + 1}(A, aA)$-operations on $\v$, we add $au$ to $v_1$.  Thus, we may assume that  $\deg v_1 = \deg u \leq m$ and $v_1$ is $a$-monic. 

Finally, we apply $\E_{d + 1}(A, aA)$-operations on $\v$ similar to (\ref{deq1}) above  to reduce degrees of $v_2, \ldots, v_d$ such that $\max \{\deg v_2, \ldots, \deg v_d\} \leq m -1$. The proof is completed here. 
\end{proof}

\begin{lemma}\label{RT}
Let $R$ be a ring of dimension $d$ $(\geq 2)$  and  $A= \oplus_{i\geq 0} I_it^i$ be a graded $R$-subalgebra of $R[t]$. Assume that $I_1 \cap \rad(R)$ contains an element $a$ such that $\height (aR) = 1$.  Then for any $\v \in \Um_{d+1}(A, aA)$ we have
$\v \sim_{\E_{d+1}(A, aA)} (w_0,w_1,\ldots,w_d)$
such that the following hold : 
\begin{enumerate}
\item
$w_0$ is $a$-monic,
 \item
$w_1$ is a linear $a$-monic polynomial, 
 \item
$w_i\in aR$ for all $i= 2, \ldots, d$,
\item 
$\height(w_d) = 1$
\end{enumerate}
\end{lemma}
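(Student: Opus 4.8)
The plan is to combine the $a$-monic normalization of Lemma \ref{amonic} with an iterated degree-reduction driven by Lemma \ref{RT0}, and then finish with a general-position argument to arrange $\height(w_d)=1$. First I would invoke Lemma \ref{amonic}: since $a\in I_1\cap\rad(R)$ with $a\neq 0$, and in fact $\height(aR)=1$ forces $I_1\neq 0$, there is an elementary transformation over $(A,aA)$ carrying $\v$ to a row whose first coordinate $v_0$ is $a$-monic. Next I would make the coefficients of $v_2,\dots,v_d$ generate the unit ideal of $R_a$: because $(v_0,\dots,v_d)A_a=A_a=R_a[t]$ and $v_0$ is monic in $R_a[t]$, the ideal of $R_a$ generated by all coefficients of $v_1,\dots,v_d$ is $R_a$ (evaluating modulo $v_0$ at the ``generic root'' as in \cite[Lemma 11.1]{vasersteinSuslin1976serre}); if some coefficient of $v_1$ is needed, one first swaps $v_1$ with $v_2$ via an elementary matrix, so WLOG the coefficients of $v_2,\dots,v_d$ already generate $R_a$. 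Now Lemma \ref{RT0} applies and lets me, by $\E_{d+1}(A,aA)$-operations, make $v_1$ an $a$-monic polynomial with $\deg v_1\le m$ and $\max\{\deg v_2,\dots,\deg v_d\}\le m-1$, where $m=\max\{\deg v_2,\dots,\deg v_d\}$ initially.

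The heart of the argument is then a downward induction on $m$. After one pass, $v_1$ is $a$-monic and $v_2,\dots,v_d$ have strictly smaller maximal degree; I re-apply the whole cycle — renormalize $v_0$ if needed, note that since $v_1$ is now $a$-monic the pair $(v_0,v_1)$ together still witnesses unimodularity over $R_a[t]$, and re-run Lemma \ref{RT0} with the roles arranged so the degrees of $v_2,\dots,v_d$ keep dropping. Iterating, I drive $\max\{\deg v_2,\dots,\deg v_d\}$ down to $0$, i.e.\ $w_i\in R$ for $i=2,\dots,d$; and since each of these lies in $aA\cap R=aR$ (the elementary operations are over $(A,aA)$, so every coordinate except the first stays in $aA$), we get $w_i\in aR$, which is item (3). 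Throughout, $w_0$ remains $a$-monic (item (1)). To secure item (2), that $w_1$ is a \emph{linear} $a$-monic polynomial: once the $w_i$ ($i\ge 2$) are constants in $aR$, I use the unimodularity of $(w_0,w_1,w_2,\dots,w_d)$ over $A_a=R_a[t]$ together with the fact that $at\in A$ — concretely, adding $a\cdot at\cdot(\text{unit of }R_a)$-type corrections and another application of the Euclid-style reduction of Lemma \ref{RT0} to the pair $(w_0,w_1)$ forces $\deg w_1$ down to exactly $1$ while keeping it $a$-monic; the degree cannot be forced to $0$ because then $(w_0,w_1,w_2,\dots,w_d)$ with $w_1,\dots,w_d$ all constants in $aR$ and $w_0$ a genuine polynomial could not be unimodular over $R[t]$ unless $aR=R$, contrary to $\height(aR)=1$.

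Finally, for item (4), $\height(w_d)=1$: I have $w_d\in aR$, and I want to modify $w_d$ (by further $\E_{d+1}(A,aA)$-operations, say adding $a$-multiples of $w_0,\dots,w_{d-1}$) so that its height as an element of $R$ equals $1$. Since $\height(aR)=1$ already, the element $a$ itself sits in no minimal prime of $R$, and a prime-avoidance argument over the finitely many minimal primes and the finitely many height-$\ge 2$ associated primes that could cause trouble lets me add a suitable element of $aR$ to $w_d$ to land in an ideal of height exactly $1$; this is where I would use that $R$ is Noetherian and that $w_d\in aR\subset\rad(R)$-adjacent data, invoking a standard general-position lemma. The step I expect to be the main obstacle is item (2) — pinning $\deg w_1$ to be \emph{exactly} $1$ rather than merely bounded — because it requires simultaneously keeping $w_1$ $a$-monic, keeping the reduction compatible with the constraint $at\in A$ (so that the elementary matrices have entries in $A$, not just $R[t]$), and ruling out total collapse of the degree; the degree-reduction machinery of Lemma \ref{RT0} handles the ``$\le$'' direction cleanly, but the exact-linearity will need a short separate unimodularity argument as sketched above.
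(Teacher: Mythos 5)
Your overall architecture (normalize $v_0$ by Lemma \ref{amonic}, then iterate a Roitman-style degree reduction via Lemma \ref{RT0}, then fix $\height(w_d)=1$ at the end) matches the paper, but two steps have genuine gaps. The first is how you arrange the hypothesis of Lemma \ref{RT0}, namely that the coefficients of $v_2,\dots,v_d$ \emph{alone} generate $R_a$. Knowing that the coefficients of $v_1,\dots,v_d$ together generate $R_a$ does not give this, and swapping $v_1$ with $v_2$ merely relabels coordinates: you still have to discard one coordinate's coefficients and keep the unit ideal, which can fail (already for $d=2$, where the coefficients of $v_2$ alone must generate $R_a$). The paper gets around this with a general-position step: since $v_0$ is monic in $R_a[t]$, \cite[Corollary 11.4]{vasershtein1974serre} gives $(v_0,v_1)R_a[t]\cap R_a + JR_a=R_a$, where $J$ is the coefficient ideal of $v_2,\dots,v_d$; one picks $x\in(v_0,v_1)A\cap R$ with $xR_a+JR_a=R_a$ and applies Theorem \ref{AG} over $R_a$ (this is exactly where $\dim R_a\le d-1$ and the count $(m+1)(d-1)\ge\dim R_a+1$ are used) to correct the coefficients $c_{ij}$ by $xa^{m+1}b_{ij}$ through elementary operations, which are legitimate because $x$ lies in the ideal generated by the first two coordinates and $at\in A$. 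Moreover this correction has to be redone at \emph{every} pass of the iteration, which your "re-run the cycle" step also elides.

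The second gap is item (4). You cannot simply "add a suitable element of $aR$ to $w_d$": elementary operations add multiples of the other coordinates, and multiples of $w_0,w_1$ would destroy the constancy of $w_d$, while multiples of $w_2,\dots,w_{d-1}$ only produce elements of $(w_2,\dots,w_{d-1})aR$, which need not suffice for prime avoidance. The paper's mechanism is essential here: by \cite[Lemma 2.1]{gupta2016existence}, $\height\bigl(w_dR+(\sqrt{0}:_R w_d)\bigr)\ge 1$, so prime avoidance produces $y\in(\sqrt{0}:_R w_d)$ with $w_d+ay$ outside every minimal prime (the bound $\height(w_d+ay)\le 1$ is automatic since $w_d+ay\in aR$); then, using $\sum v_ip_i=1$, one adds $ay(v_0p_0+\cdots+w_{d-1}p_{d-1})$ elementarily, and the discrepancy $ayw_dp_d$ is nilpotent precisely because $yw_d\in\sqrt{0}$, so it is removed by Lemma \ref{26}. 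Your sketch contains neither the colon-ideal choice of $y$ nor the nilpotency clean-up, and the mention of "height $\ge 2$ associated primes" is beside the point. Finally, your argument that $\deg w_1$ cannot collapse to $0$ is false as stated: a row with $w_0\equiv 1\pmod{aA}$ and all remaining coordinates constants in $aR$ is perfectly unimodular, so linearity of $w_1$ must be arranged by the reduction itself rather than deduced from unimodularity.
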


\begin{proof}
    Let $\v=(v_0,\ldots,v_d) \in \Um_{d+1}(A,aA)$. By Lemma \ref{amonic}, we may assume that $v_0$ is $a$-monic and $l(v_0)=a^n$. 
The proof naturally falls into two steps. 

\noindent
{\bf Step 1 :} {\it (Roitman's machine)
If $m = \max \{\deg v_2, \ldots, \deg v_d\} \geq 1$, then by applying $\E_{d+1}(A, aA)$-operation on $\v$, one can modify $\v$ such that $\max \{\deg v_2, \ldots, \deg v_d\} < m$, $\deg v_1 \leq m$ and $v_1$ is $a$-monic.}

Let $v_j = \sum_{i = 0}^{m} c_{ij} t^i$ for  $2 \leq j \leq d$ and $J$ be the ideal of $R$ generated  by the coefficients $c_{ij}$ for $0 \leq i \leq m$ and $2 \leq j \leq d$.      
 We observe that $(v_0,v_1)R[t] + JR[t]= R[t]$. Since $v_0$ is monic in $R_a[t]$, by \cite[Corollary 11.4]{vasershtein1974serre}, it follows that 
 $(v_0, v_1)R_{a}[t]\cap R_{a} + J R_{a}= R_{a}.$ 
Since $A_a = R_a[t]$, so
 we may choose $x \in (v_1,v_2)A \cap R$ such that $xR_a + JR_a = R_a$, i.e.
 \[(x,c_{02},\ldots,c_{m d})\in \Um_{(m + 1)(d-1)+1}(R_a).\]
  Note that $(m + 1)(d-1)+1 \geq 2(d-1) + 1 \geq d + 1 \geq \dim R_a + 2$ since $d\geq 2$ and $\dim R_a \leq d - 1$. Therefore, by Theorem \ref{AG}, we have ${b_{ij}}\in R$ such that
 \[(c_{02}+xa^{m + 1} b_{02},\ldots,c_{md}+xa^{m + 1} b_{md})\in \Um_{(m + 1)(d-1)}(R_a).\]

Since $at \in A$, we have $a^m(b_{0j}+b_{1 j}t+\ldots+b_{m j}t^{m}) \in A$ for $j = 2, \ldots d$. We also have $x \in (v_1,v_2)A$, therefore we have the following : 
\[\v \sim_{\E_{d+1}(A, aA)} \big(v_0, v_1, v_2+ax \{a^m (b_{02}+b_{12}t+\ldots+b_{m 2}t^{m})\}, \ldots, v_d+ax \{a^m(b_{0d}+b_{1d}t+\ldots+b_{m d}t^{m})\}\big)\]

Therefore, by applying suitable $\E_{d+1}(A, aA)$-operations, we may modify $\v$ further such that  the coefficients of $v_2,v_3,\ldots,v_d$ generate $R_{a}$.
By Lemma \ref{RT0}, we apply further $\E_{d+1}(A, aA)$-operations to modify $\v$ such  that $\deg v_1 \leq m$, $v_1$ is $a$-monic and $\max \{\deg v_2, \ldots, \deg v_d\} < m$. Therefore, the proof of the statement of Roitman's machine follows. 
 
By applying Roitman's machine  iteratively, we obtain $\v \sim_{\E_{d+1}(A,aA)} (v_0, w_1, \ldots, w_d)$ such that $w_1$ is a linear $a$-monic polynomial  and $w_i$ is a constant in $aR$ for $i=2,\ldots, d$. 

\noindent
{\bf Step 2 :} 

We know that $\height ((w_dR) + (\sqrt{0} :_R w_d)) \geq 1$ , see \cite[Lemma 2.1]{gupta2016existence}. 
Since $\height (aR) = 1$, an easy argument yields that the set $w_d + a (\sqrt{0}:_R w_d)$ is not contained in any minimal prime ideal of $R$. Therefore, by the prime avoidance lemma, there is $y \in (\sqrt{0} : w_d)$ such that $\height(w_d + ay) \geq 1$.  As $w_d + ay \in aR$, we obtain $\height(w_d + ay) = 1$.
 
Since  $(v_0, w_1, \ldots, w_d) \in \Um_{d + 1}(A, aA)$, there are $p_0, \ldots, p_d \in A$ such that $v_0p_0 + \ldots + w_d p_d = 1$. 
This implies that $ay(v_0p_0 + w_1 p_1+ \ldots + w_{d - 1} p_{d - 1}) = ay - ayw_d p_d$. Therefore, we have 
\begin{align*}
(v_0, w_1, \ldots, w_d) & \sim_{\E_{d+1}(A,aA)} (v_0, w_1, \ldots, w_d + ay - ayw_d p_d) \\
& \sim_{\E_{d+1}(A,aA)} (v_0, w_1, \ldots, w_d + ay) \ \text{by Lemma \ref{26} since $ayw_d p_d \in (a) \cap\Nil A$.}
\end{align*}
This concludes the proof.
\end{proof}

The concept of subintegral extensions was introduced by R. Swan in \cite[\S 2]{swan1980seminormality}.  
The main takeaway from the next two lemmas \ref{gradsub}, \ref{subrel} is that when studying orbit spaces of a graded subring of a polynomial ring $R[t]$ over a domain $R$, one can always assume that the graded subring is birationally equivalent to the polynomial ring itself.   This is achieved by a change of variables and by replacing the subring with an appropriate subintegral extension, a crucial step in the proof of  Lemma \ref{finite}.

\begin{lemma}\label{gradsub}
    Let $R$ be a domain and $A= \oplus_{i\geq 0} A_i$ be a finitely generated graded $R$-subalgebra of $R[t]$. Assume that $A_i\neq 0$ for all $i\geq p \geq 2$. Then there exists a graded $R$-subalgebra $B= \oplus_{i \geq 0} B_i$ satisfying :  
    
    \begin{enumerate}
        \item $A \subset B \subset R[t]$ such that $A_i=B_i$ for all $i\geq p$.
        \item$ B_{p-1}\neq 0$
        \item $B$ is a simple subintegral extension of $A$, i.e. $B=A[u]; u^2,u^3 \in A.$
    \end{enumerate}
    
\end{lemma}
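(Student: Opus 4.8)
The plan is to exhibit $B$ explicitly as a simple extension $B = A[u]$ with $u = ct^{p-1}$ for one nonzero $c \in R$, chosen ``divisible enough'' that the three requirements reduce to elementary monomial divisibilities. First I would fix finitely many homogeneous $R$-algebra generators $f_\ell = b_\ell t^{d_\ell}$ $(\ell = 1, \ldots, m)$ of $A$ with each $d_\ell \geq 1$; discarding those with $b_\ell = 0$, we have $b_\ell \in R \setminus \{0\}$ for all $\ell$, so (as $R$ is a domain) every monomial $b^{\mathbf e} := \prod_\ell b_\ell^{e_\ell}$, $\mathbf e \in \ZZ_{\geq 0}^m$, is nonzero. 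Writing $A_i = I_i t^i$ with $I_i = \{r \in R : r t^i \in A\}$ an ideal of $R$ (note $A_0 = R$), the description of $A$ as the $R$-span of the monomials in the $f_\ell$ yields $I_i = \sum_{\deg \mathbf e = i} R\, b^{\mathbf e}$, where $\deg \mathbf e := \sum_\ell e_\ell d_\ell$; hence $I_i \neq 0 \iff i$ lies in the numerical submonoid $S := \langle d_1, \ldots, d_m \rangle$ of $\ZZ_{\geq 0}$, so the hypothesis $A_i \neq 0$ for $i \geq p$ says exactly that $S$ contains every integer $\geq p$.

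Since $p \geq 2$, the integers $2p-2$, $3p-3$ and $d_\ell + p - 1$ $(1 \leq \ell \leq m)$ are all $\geq p$ and hence lie in $S$; I would fix exponent vectors $\mathbf a, \mathbf b, \mathbf e^{(1)}, \ldots, \mathbf e^{(m)} \in \ZZ_{\geq 0}^m$ representing them, choose $\mathbf g \in \ZZ_{\geq 0}^m$ dominating all of these coordinatewise, and set $c := b^{\mathbf g}$ and $u := c t^{p-1}$, so $c \neq 0$ and $0 \neq u \in Ru \subseteq B_{p-1}$, which gives (2). Moreover $u^2, u^3 \in A$: since $2\mathbf g \geq \mathbf g \geq \mathbf a$ and $3\mathbf g \geq \mathbf g \geq \mathbf b$, we have $c^2 \in b^{\mathbf a} R \subseteq I_{2p-2}$ and $c^3 \in b^{\mathbf b} R \subseteq I_{3p-3}$, i.e. $u^2 \in A_{2p-2}$ and $u^3 \in A_{3p-3}$. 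As every integer $\geq 2$ is of the form $2s + 3r$, it follows that $u^k \in A$ for all $k \geq 2$, so $B := A[u] = A + Au$; this is a graded $R$-subalgebra of $R[t]$ (a sum of graded $R$-submodules of $R[t]$, $u$ being homogeneous of degree $p-1$) and, by construction, a simple subintegral extension of $A$, which gives (3).

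The remaining and only substantive point is (1), that $A_i = B_i$ for all $i \geq p$. From $B = A + Au$ one gets $B_i = A_i + A_{i-(p-1)} u = A_i + c\, I_{i-(p-1)}\, t^i$, so it suffices to show $c\, I_j \subseteq I_{j+p-1}$ for every $j \geq 1$, and for this it is enough to treat each spanning monomial $b^{\mathbf h}$ of $I_j$ (so $\deg \mathbf h = j$). Since $j \geq 1$, some coordinate $h_{\ell_0} \geq 1$; let $\mathbf k$ be obtained from $\mathbf h + \mathbf e^{(\ell_0)}$ by lowering its $\ell_0$-th coordinate by $1$. Then $\mathbf k \in \ZZ_{\geq 0}^m$, $\deg \mathbf k = j - d_{\ell_0} + (d_{\ell_0} + p - 1) = j + p - 1$, and $\mathbf k \leq \mathbf h + \mathbf e^{(\ell_0)} \leq \mathbf h + \mathbf g$; hence $c\, b^{\mathbf h} = b^{\mathbf g + \mathbf h}$ is an $R$-multiple of the spanning monomial $b^{\mathbf k}$ of $I_{j+p-1}$, so $c\, b^{\mathbf h} \in I_{j+p-1}$. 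Summing over $\mathbf h$ gives $c\, I_j \subseteq I_{j+p-1}$, hence $B_i = A_i$ for $i \geq p$, completing the argument. I expect this degree-balancing move — trading one already-used generator for a chosen representation of $d_{\ell_0} + p - 1$, so that the degree rises by exactly $p-1$ while the exponent vector stays below $\mathbf g + \mathbf h$ — to be the only delicate point; in particular one cannot simply take $c \in I_{p-1}$, as $I_{p-1}$ may vanish. (When $A_{p-1} \neq 0$ one could instead just take $B = A$, but the construction above works uniformly.)
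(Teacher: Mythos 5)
Your proof is correct and follows essentially the same route as the paper: both construct $B=A[ct^{p-1}]$ for a single nonzero $c\in R$ built as a product of coefficients chosen so that $ct^{p-1}\cdot A_{+}\subseteq A$ and $(ct^{p-1})^2,(ct^{p-1})^3\in A$. The only difference is bookkeeping — the paper takes $c$ to be a product of coefficients of arbitrary nonzero elements of $A_{n_j+p-1}$ and $A_{2(p-1)}$, while you realize the same absorption of the degree shift $p-1$ via monomials in the generators' coefficients and the numerical-semigroup description of the degrees.
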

\begin{proof}
    Let $A= R[a_1t^{n_1},a_2t^{n_2},\ldots,a_kt^{n_k}]$ be  a graded $R$-subalgebra of $R[t]$. We choose a sequence of nonzero elements $c_i t^i \in A_i$ for all $i\geq p$. Let $c=c_{n_{1}+p-1}c_{n_{2}+p-1}\ldots c_{n_{k}+p-1}c_{2(p-1)}$ then $ct^{p-1}x \in \oplus_{i\geq p} A_i$ for $x= a_1t^{n_1},\ldots, a_kt^{n_k},ct^{p-1}$. We observe that $(ct^{p-1})^2 \in \oplus_{i\geq p} A_i$ and $ct^{p-1}(\oplus_{i\geq 1} A_i) \subset \oplus_{i\geq p} A_i$. Suppose $B= A[ct^{p-1}] = A + Rct^{p-1}$, then $(ct^{p-1})^2,(ct^{p-1})^3 \in A$, $A_i=B_i$ for all $i\geq p$ and $B_{p-1} \neq 0$. Hence, the proof is complete.
\end{proof}

\begin{lemma}\label{subrel}
Let $R$ be a ring which is not necessarily Noetherian,  $R \hookrightarrow S=R[x]$ be a subintegral extension such that $x^2,x^3 \in R$ and $I$ be an ideal of $R$. Suppose $\maxSpec R$ is a union of finitely many Noetherian subspaces of dimension at most $d$. Then the orbit spaces $\frac{\Um_{d+1}(R,I)}{\E_{d+1}(R,I)}$ and $\frac{\Um_{d+1}(S,IS)}{\E_{d+1}(S,IS)}$ have the group structure. Moreover, the following induced map is an isomorphism of groups: 
\[ \frac{\Um_{d+1}(R,I)}{\E_{d+1}(R,I)}\rightarrow \frac{\Um_{d+1}(S,IS)}{\E_{d+1}(S,IS).}\]
\end{lemma}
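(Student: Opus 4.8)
The plan is to exploit the fact that a subintegral extension $R \hookrightarrow S = R[x]$ with $x^2, x^3 \in R$ is, in particular, an integral extension with $S = R + Rx$, and that $\maxSpec S$ is therefore a union of finitely many Noetherian subspaces of the same dimension $d$, so that both orbit spaces indeed carry the van der Kallen group structure described in \S\ref{pregst}; the inclusion $j : R \hookrightarrow S$ sends $I$ into $IS$, so the induced map $j_*$ is a group homomorphism. The real content is to show $j_*$ is bijective, and for this I would first reduce to the excision ring. Using the excision theorem of \S\ref{preexr} (and the double excision isomorphism $\mathfrak{f}$ of \S\ref{preexact}), the map $j_*$ fits into a commutative square with the corresponding map $\frac{\Um_{d+1}(\ZZ \oplus I, 0 \oplus I)}{\E_{d+1}(\ZZ \oplus I, 0 \oplus I)} \to \frac{\Um_{d+1}(\ZZ \oplus IS, 0 \oplus IS)}{\E_{d+1}(\ZZ \oplus IS, 0 \oplus IS)}$, whose vertical arrows are bijections; so it suffices to treat the absolute case, i.e. to show $\frac{\Um_{d+1}(R')}{\E_{d+1}(R')} \to \frac{\Um_{d+1}(S')}{\E_{d+1}(S')}$ is bijective for $R' = \ZZ \oplus I \hookrightarrow S' = \ZZ \oplus IS$, which is again a subintegral extension of the same shape (generated by $(0,x)$ whose square and cube lie in $R'$). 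This is precisely the kind of statement covered by the known behavior of $K$-theory / orbit spaces under seminormalization-type extensions.

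The concrete mechanism I would use is the Milnor patching / conductor square attached to a subintegral extension. Since $x^2, x^3 \in R$, the conductor $\mathfrak{c} = (S : R)$ contains $x$ (because $xS = x(R + Rx) = Rx + Rx^2 \subseteq R$... actually $xS \subseteq R$), hence $\mathfrak{c} \supseteq xS + \text{(something)}$ and in fact $S/\mathfrak{c}$ is generated by the image of $x$ which is nilpotent modulo... more carefully: $R/\mathfrak{c} \hookrightarrow S/\mathfrak{c}$ with $S/\mathfrak{c} = (R/\mathfrak{c})[\bar x]$ and $\bar x^2 = \bar x^3 = 0$ in the relevant quotient, so $S/\mathfrak{c}$ is an infinitesimal (nilpotent) extension of $R/\mathfrak{c}$. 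One then has the Cartesian (conductor) square
\[
\xymatrix{
R \ar[r] \ar[d] & S \ar[d] \\
R/\mathfrak{c} \ar[r] & S/\mathfrak{c}
}
\]
and the bottom map is a nilpotent extension. For the orbit spaces $\frac{\Um_{d+1}(-)}{\E_{d+1}(-)}$ (which here are groups), nilpotent extensions induce isomorphisms — this is exactly the content of Lemma \ref{26} / \ref{IJ} applied with the nilpotent ideal, giving $\frac{\Um_{d+1}(R/\mathfrak{c})}{\E_{d+1}(R/\mathfrak{c})} \xrightarrow{\cong} \frac{\Um_{d+1}(S/\mathfrak{c})}{\E_{d+1}(S/\mathfrak{c})}$. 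A Mayer--Vietoris / patching argument for the Cartesian square (in the spirit of Milnor's $K$-theory patching, which is available in the van der Kallen range $n = d+1 \geq 3$) then forces $j_*$ to be an isomorphism.

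For surjectivity one argues directly: given a unimodular row $\w$ over $S$, reduce it mod $\mathfrak c$, lift back through the isomorphism on the nilpotent side to a row over $R/\mathfrak c$, and patch with $\w$ over $S$ along $S/\mathfrak c$ to produce a row over $R = R/\mathfrak{c} \times_{S/\mathfrak{c}} S$ mapping to $[\w]$ after an elementary correction (the elementary group patches because $\E_{d+1}$ surjects onto $\E_{d+1}$ of the quotients). For injectivity one runs the same patching in reverse: if a row $\v$ over $R$ becomes trivial over $S$, it is already trivial over $S/\mathfrak{c}$, hence trivial over $R/\mathfrak{c}$ by the isomorphism, and then the two trivializations patch — up to the standard subtlety that $\GL$ need not patch, only $\E$ and $\SL$ do in this range — to a trivialization over $R$; here one invokes \cite[p. 204, Lemma 1.5]{lam2006serre} exactly as in the proof of Lemma \ref{retract} to pass from an $\SL_{d+1}$-equivalence to an $\E_{d+1}$-equivalence.

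The main obstacle I anticipate is the patching step for the elementary subgroup: one must know that $\E_{d+1}$ behaves well with respect to the conductor square, equivalently that $\SL_{d+1}(R) = \SL_{d+1}(R/\mathfrak{c}) \times_{\SL_{d+1}(S/\mathfrak{c})} \SL_{d+1}(S)$ up to $\E_{d+1}$, and that elementary matrices over $S/\mathfrak{c}$ and over $S$ lift compatibly. This is where the hypothesis $x^2, x^3 \in R$ (so that the bottom extension is genuinely nilpotent, not merely integral) is essential and where I would spend the most care; once the nilpotent comparison $\frac{\Um_{d+1}(R/\mathfrak c)}{\E_{d+1}(R/\mathfrak c)} \cong \frac{\Um_{d+1}(S/\mathfrak c)}{\E_{d+1}(S/\mathfrak c)}$ is in hand (free from Lemma \ref{IJ}), the rest is a formal diagram chase in the Cartesian square together with the relative Whitehead-lemma bookkeeping of Lemma \ref{retract}.
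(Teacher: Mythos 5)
Your overall architecture (nilpotent comparison along the common ideal plus Milnor patching over the conductor square) is close in spirit to the paper's Case-2 reduction, but the decisive step --- injectivity of the map in the absolute case --- is precisely where your argument does not close, and you defer it rather than prove it. Concretely: after using the nilpotent comparison to move a row $\v \in \Um_{d+1}(R)$ with $[\v]_S = [e_1]$ into $\Um_{d+1}(R, J)$, where $J = (x^2, x^3)$ is the common ideal, you are left with some $\epsilon \in \E_{d+1}(S)$ satisfying $\v\epsilon = e_1$, and over $S/J$ the matrix $\bar\epsilon$ merely stabilizes $e_1$. To patch over the Cartesian square $R = R/J \times_{S/J} S$ you would need to replace the pair $(\mathrm{id}, \epsilon)$ by a pair agreeing over $S/J$ and still carrying $\v$ to $e_1$; but the discrepancy $\bar\epsilon$ cannot be pushed into $R/J$ (the bottom map $R/J \hookrightarrow S/J$ is injective, not surjective, so there is nothing to lift along), and a lift of it to $\E_{d+1}(S)$ need not fix $e_1$. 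This is not mere bookkeeping: subintegral extensions genuinely change $K_1$-type invariants (e.g.\ $k[t^2,t^3] \subset k[t]$), so no formal Mayer--Vietoris chase over the conductor square can yield injectivity. The paper does not attempt one: triviality of the kernel of $\frac{\Um_{d+1}(R)}{\E_{d+1}(R)} \rightarrow \frac{\Um_{d+1}(S)}{\E_{d+1}(S)}$ is imported wholesale from Gubeladze's theorem on subintegral extensions \cite{gubeladze2001subintegral}, while surjectivity is done by a direct argument (no patching needed): modulo $J$ the element $\bar x$ is nilpotent, so $\bar\v \sim_{\E_{d+1}(S/J)}$ a row with entries in $R$; lifting the elementary operations to $S$ gives $\v \sim_{\E_{d+1}(S)} \w$ with $\w \in R^{d+1}$, and $\w$ is unimodular over $R$ because $R \subset S$ is integral.

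Two further slips. The inclusion $xS \subseteq R$ with which you begin is false in general ($x \cdot 1 = x \notin R$); only $x^2S + x^3S = (x^2, x^3)R$ is a common ideal of $R$ and $S$, and that is the ideal the paper works with. More seriously, your reduction of the relative case to the absolute case via the excision ring does not work as stated: $\ZZ \oplus IS$ is not of the form $(\ZZ \oplus I)[(0,x)]$ --- indeed $(0,x)$ need not even lie in $\ZZ \oplus IS$, since $x \notin IS$ in general --- so the reduction does not return you to the hypotheses of the lemma (a simple subintegral extension generated by one element with square and cube in the base). The paper instead uses excision algebras: $S \oplus IS = (R \oplus I)[(x,0)]$ with $(x,0)^2, (x,0)^3 \in R \oplus I$, so the absolute case applies verbatim to $R \oplus I \subset S \oplus IS$, and the relative statement follows by comparing the two split exact sequences of \S\ref{preexact}. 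The minimal repair of your proposal is therefore to cite Gubeladze's injectivity result for the kernel, keep the paper's direct surjectivity argument, and replace the $\ZZ \oplus I$ reduction by the $R \oplus I$ one.
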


\begin{proof}
We begin with an observation that $J=(x^2,x^3)$ is a common ideal of $R$ and $S$. It follows from a result of Swan \cite[Lemma 2.4]{swan1980seminormality} that $R_{x^2}= S_{x^2}$ which implies $D_R(x^2) = D_S(x^2)$ and $$V_R(x^2)= \Spec R/(x^2,x^3)= \Spec S/(x^2,x^3)= V_S(x^2).$$ 
Therefore,  $\maxSpec S$ is a union of finitely many Noetherian subspaces of dimension at most $d$. By \S \ref{pregst}, the orbit spaces $ \frac{\Um_{d+1}(R,I)}{\E_{d+1}(R,I)}$,  $\frac{\Um_{d+1}(S, IS)}{\E_{d+1}(S, IS)}$ have a group structure and the map 
\[\frac{\Um_{d+1}(R, I)}{\E_{d+1}(R, I)}\rightarrow \frac{\Um_{d+1}(S, IS)}{\E_{d+1}(S, IS)}\]
 is a group homomorphism. 
The proof naturally falls into the following two cases.

\noindent
{\bf Case-1 : }($I = R$)
The kernel of the map is trivial by Gubeladze's result, see \cite{gubeladze2001subintegral}, so the map is injective. 
 Let overline denote the image modulo $J$. To prove the surjectivity, we choose $\v \in \Um_{d+1}(S)$. We observe that $S = R + Rx$ and $\overline{x}$ is a nilpotent element of $\overline S$. Therefore, we have  $\overline \v \sim_{\E_{d+1}(S)} \overline \w_1$ for some $\w_1 \in R^n$.  Lifting the elementary operations over $S$, we get $\v \sim_{\E_{d+1}(S)} \w$ where $\w \in \Um_{d+1}(S)$ such that $\w \equiv \w_1 \vpmod J$. A closer look reveals that $\w \in \Um_{d+1}(R)$. Hence, the map is surjective, and the proof follows. 

\noindent
{\bf Case-2 : }($I$ is a proper ideal of $R$)
We recall excision algebras described in \S\ref{preexa}
We observe that \[S \oplus IS = (R + x R) \oplus (I + x I) = (R \oplus I) + (x, 0)(R \oplus I) = (R \oplus I)[(x, 0)].\]
and $(x, 0)^2, (x, 0)^3 \in R \oplus I$. Therefore, $S \oplus IS$ is a subintegral extension of $R \oplus I$. 

We consider the following commutative diagram:
\[
\xymatrix{
  0 \ar[r] & \frac{\Um_{d+1}(R,I)}{\E_{d+1}(R,I)} \ar[d] \ar[r] & \frac{\Um_{d+1}(R\oplus I)}{\E_{d+1}(R\oplus I)} \ar[d] \ar[r] &\frac{\Um_{d+1}(R)}{\E_{d+1}(R)} \ar[d] \ar[r] & 0 \\
  0 \ar[r] &\frac{\Um_{d+1}(S,IS)}{\E_{d+1}(S,IS)} \ar[r] & \frac{\Um_{d+1}(S \oplus IS)}{\E_{d+1}(S \oplus IS)} \ar[r] & \frac{\Um_{d+1}(S)}{\E_{d+1}(S)} \ar[r] & 0
}
\]

The horizontal rows are exact by \S \ref{preexact}. The middle and the right vertical maps are bijective by Case-1. Therefore, the left vertical map is also bijective, and the proof follows. 
\end{proof}

Let $I$ be an ideal of a ring $R$.  If $\v = (v_1, \ldots, v_n) \in \Um_n(R, I)$, we set $\v^{(k)} = (v_1^k, \ldots, v_n), k \in \NN$. 
If $\v \sim_{\E_n(R, I)} \w$, then it follows from a result of Vaserstein \cite[Lemma 4]{vaserstein1986operations} and the Excision theorem \S \ref{preexr} that $\v^{(k)} \sim_{\E_n(R, I)} \w^{(k)}$ for $k \in \NN$.
Therefore, the map $\displaystyle \psi_k : \frac{\Um_{n}(R,I)}{\E_{n}(R,I)} \rightarrow \frac{\Um_{n}(R,I)}{\E_{n}(R,I)}$ given by $ [\v] \xrightarrow{\psi_k}[\v^{(k)}]$ for $k \in \NN$ is well-defined. We introduce the following properties, whose relevance to the topic on hand will be apparent in the last section.

\begin{definition}{\rm (The good property, $k$-divisibility)}\label{defpk}
Let $I$ be an ideal of a ring $R$ such that the orbit space $\frac{\Um_{n}(R,I)}{\E_{n}(R,I)}$ has a group structure for  $n \geq 3$, see \S \ref{pregst}. We say that $\frac{\Um_{n}(R,I)}{\E_{n}(R,I)}$ is good if $[\v^{(k)}] = [\v]^k$ for any $k \in \NN$ and $\v \in \Um_{n}(R,I)$.  The orbit space $\frac{\Um_{n}(R,I)}{\E_{n}(R,I)}$ is called $k$-divisible if the map $[\v] \mapsto [\v]^k$ for $\v \in \Um_{n}(R,I)$ is a surjective map. 
\end{definition}

\begin{remark}\label{suscomp}
Suppose  the orbit space $\frac{\Um_{n}(R, I)}{\E_{n}(R, I)}$ has a group structure for $n \geq 3$, see \S \ref{pregst}.  
If the orbit space $\frac{\Um_{n}(R, I)}{\E_{n}(R, I)}$ is good and is ${(n - 1)!}$-divisible, then
by Suslin's factorial theorem \cite[Theorem 2]{suslin1977stably} and thereafter the result of Rao \cite[Corollary 4.3]{rao2009stably}, one  obtains $\Um_{n}(R, I)= e_1\SL_{n}(R, I)$. 
\end{remark}

The following lemma describes how the good property and the $k$-divisibility for $k \in \NN$ transfer between orbit spaces over rings connected by different homomorphisms.
\begin{lemma}\label{pkhom}
Let $f : R \rightarrow S$ be a ring homomorphism and $I$, $J$ be ideals of $R$, $S$ respectively such that $f(I) \subset J$. Assume that both the orbit spaces $\frac{\Um_n(R, I)}{\E_n(R, I)}$ and $ \frac{\Um_n(S, J)}{\E_n(S, J)}$ have a group structure, see \S \ref{pregst}. Let $\displaystyle f_* : \frac{\Um_n(R, I)}{\E_n(R, I)} \rightarrow \frac{\Um_n(S, J)}{\E_n(S, J)}$ be the map induced by $f$. Let $k \in \NN$. The following holds true.
 \begin{enumerate}
 \item
Suppose $f_*$ is injective and $\v \in \Um_n(R, I)$. If $\frac{\Um_n(S, J)}{\E_n(S, J)}$ is good, then $[\v^{(k)}] = [\v]^k$ for $k \in \NN$. In particular, if $f_*$ is an isomorphism and $\frac{\Um_n(S, J)}{\E_n(S, J)}$ is good ($k$-divisible), then so is $ \frac{\Um_n(R, I)}{\E_n(R, I)}$.

\item
Suppose $[\w] \in \Imj f_*$ for some $\w \in \Um_n(S, J)$. If $\frac{\Um_n(R, I)}{\E_n(R, I)}$ is good, then $[\w^{(k)}] = [\w]^k$ for $k \in \NN$. If $\frac{\Um_n(R, I)}{\E_n(R, I)}$ is $k$-divisible, then $[\w] = [\w']^k$ for some $\w' \in \Um_n(S, J)$. In particular, if $f_*$ is surjective and $ \frac{\Um_n(R, I)}{\E_n(R, I)}$  is good ($k$-divisible), then so is $\frac{\Um_n(S, J)}{\E_n(S, J)}$. 
\end{enumerate}
\end{lemma}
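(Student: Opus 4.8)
The plan is to reduce the whole statement to two elementary facts. The first, already recorded in \S\ref{pregst}, is that $f_*$ is a \emph{group homomorphism} between the two orbit spaces. The second is that $f_*$ intertwines the operation $\psi_k$ on the two sides, i.e. $f_* \circ \psi_k = \psi_k \circ f_*$. This is immediate from the componentwise formula for $f_*$: for $\v = (v_1, \ldots, v_n) \in \Um_n(R, I)$ one has
\[
f\bigl(\v^{(k)}\bigr) = f\bigl((v_1^k, v_2, \ldots, v_n)\bigr) = \bigl(f(v_1)^k, f(v_2), \ldots, f(v_n)\bigr) = \bigl(f(\v)\bigr)^{(k)},
\]
and since $\psi_k$ is well-defined on the relative orbit spaces (as established just before Definition \ref{defpk}), this identity descends to orbits. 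I do not expect a genuine obstacle beyond verifying this intertwining identity and the well-definedness it relies on; everything else is a short diagram chase using the group law and the bijectivity hypotheses on $f_*$.

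For part (1), assume $f_*$ is injective and $\frac{\Um_n(S, J)}{\E_n(S, J)}$ is good. For $\v \in \Um_n(R, I)$,
\[
f_*\bigl([\v^{(k)}]\bigr) = \psi_k\bigl(f_*[\v]\bigr) = \bigl(f_*[\v]\bigr)^k = f_*\bigl([\v]^k\bigr),
\]
using the intertwining identity, then goodness of the target, then the homomorphism property; injectivity of $f_*$ now yields $[\v^{(k)}] = [\v]^k$, so the source is good. If moreover $f_*$ is an isomorphism and the target is $k$-divisible, then for $[\v]$ in the source choose $[\mathbf{z}]$ in the target with $[\mathbf{z}]^k = f_*[\v]$, write $[\mathbf{z}] = f_*[\w]$ using surjectivity, and conclude from $f_*\bigl([\w]^k\bigr) = \bigl(f_*[\w]\bigr)^k = [\mathbf{z}]^k = f_*[\v]$ together with injectivity that $[\w]^k = [\v]$; hence the source is $k$-divisible.

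For part (2), assume $[\w] = f_*[\v]$ for some $\v \in \Um_n(R, I)$ and that $\frac{\Um_n(R, I)}{\E_n(R, I)}$ is good. Then
\[
[\w^{(k)}] = \psi_k[\w] = \psi_k\bigl(f_*[\v]\bigr) = f_*\bigl(\psi_k[\v]\bigr) = f_*\bigl([\v]^k\bigr) = \bigl(f_*[\v]\bigr)^k = [\w]^k,
\]
using the intertwining identity, goodness of the source, and the homomorphism property. If instead the source is $k$-divisible, choose $[\v']$ with $[\v']^k = [\v]$; then $[\w] = f_*\bigl([\v']^k\bigr) = \bigl(f_*[\v']\bigr)^k = [\w']^k$ with $\w' = f(\v')$. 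Finally, the two ``in particular'' clauses follow by letting $\w$ range over all of $\Um_n(S, J)$: when $f_*$ is surjective every orbit of the target equals $f_*[\v]$ for some $\v \in \Um_n(R,I)$, so the first computation of part (2) applies to every $\w$ (giving goodness of the target), while the divisibility statement applied to every $\w$ says precisely that the $k$-th power map on the target is surjective (giving $k$-divisibility of the target).
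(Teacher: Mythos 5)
Your proposal is correct and follows essentially the same route as the paper: the paper's proof consists precisely of the two equalities $f_*([\v^{(k)}]) = [\w^{(k)}]$ (your intertwining identity $f_*\circ\psi_k = \psi_k\circ f_*$) and $f_*([\v]^k) = [\w]^k$ (the homomorphism property from \S\ref{pregst}), with the remaining diagram chase explicitly skipped. You have simply written out the details the paper omits, and they check out.
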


\begin{proof}
Both assertions follow immediately from the equalities $f_*([\v^{(k)}]) =[\w^{(k)}]$ and $f_*([\v]^{k}) =[\w]^{k}$ for $k \in \NN$ and any $\v \in \Um_n(R, I)$, $\w \in \Um_n(S, J)$ satisfying $f([\v])=[\w]$. We skip the details. 
\end{proof}

\section {Orbit spaces over graded $R$-subalgebras of $R[t]$ for $\dim(R) = 2$}
Let $R$ be a ring of dimension $2$ and $A$ be a graded $R$-subalgebra of $R[t]$. This section focuses on studying the orbit space $\frac{\Um_3(A, IA)}{\E_3(A, IA)}$ where $I \subset \rad(R)$ is an ideal of $R$. We recall the Swan-Weibel map, which is essential to the developments in this section.


\subsection{The Swan-Weibel map}\label{SW}
Let $A= \oplus_{i\geq 0} A_i$ be a graded ring such that $A_0=R$. The  Swan-Weibel homomorphism $\psi: A \rightarrow A[t]$ is defined by $\psi(a_0+a_1\cdots+a_r) = a_0+a_1t\cdots+a_rt^r$ for $a_i \in A_i$. Assume that $\phi_m :A[t]\rightarrow A$, $\phi_m(p(t)) =  p(m)$ for $m=0,1$ are the evaluation maps, $\i: R \rightarrow A$ is the inclusion map and $\eta: A\rightarrow A/A_+= R$ is the natural quotient map. Consider the following commutative diagram:

\[
\xymatrix{
    R \ar[rd]_{Id} \ar[r]^{\i} & A \ar[r]^{\psi} \ar[d]_{\eta} 
      \ar@/^1.5pc/[rr]^{Id} & A[t] \ar[d]^{\phi_0} \ar[r]^{\phi_1} & A   \\    
    & R \ar[r]^{\i} & A  &  
}
\]

By Karoubi's result \cite[Corollary 3.2]{vasershtein1974serre}, the natural inclusion $A \rightarrow A[t]$ induces an isomorphism $\W_{\SL}(A) \rightarrow \W_{\SL}(A[t])$. Therefore, the following lemma is a consequence of \cite[Lemma 5.7]{anderson1978projective}.
\begin{lemma}\label{34} 
Let $A = \oplus_{i\geq 0} A_i$ be a graded ring such that $A_0 = R$ and $2R=R$. Then $W_{\SL}(A) \cong W_{\SL}(R)$.
\end{lemma}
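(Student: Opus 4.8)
The plan is to deduce $\W_{\SL}(A) \cong \W_{\SL}(R)$ from the cited homotopy‑invariance result of Karoubi together with the contraction principle of Anderson–Geramita recorded in \cite[Lemma 5.7]{anderson1978projective}, using the Swan–Weibel homomorphism $\psi$ as the key map. First I would apply the $\W_{\SL}$‑functor to the commutative diagram displayed just above the statement, obtaining the diagram of abelian groups
\[
\xymatrix{
\W_{\SL}(R) \ar[rd]_{\mathrm{id}} \ar[r]^{\i_*} & \W_{\SL}(A) \ar[r]^{\psi_*} \ar[d]_{\eta_*} & \W_{\SL}(A[t]) \ar[d]^{(\phi_0)_*} \ar[r]^{(\phi_1)_*} & \W_{\SL}(A)\\
& \W_{\SL}(R) \ar[r]^{\i_*} & \W_{\SL}(A) &
}
\]
where functoriality of $R \mapsto \W_{\SL}(R)$ (noted in \S\ref{swg}) guarantees all triangles and squares still commute and that the composite $\W_{\SL}(A) \to \W_{\SL}(A[t]) \to \W_{\SL}(A)$ along the top is the identity.

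Next I would invoke Karoubi's result, quoted in \S\ref{SW}, that the inclusion $A \hookrightarrow A[t]$ induces an isomorphism $\W_{\SL}(A) \xrightarrow{\ \sim\ } \W_{\SL}(A[t])$; under this identification the two evaluation maps $(\phi_0)_*$ and $(\phi_1)_*$ both become the identity on $\W_{\SL}(A)$, since each $\phi_m$ is a left inverse of the inclusion. This is precisely the hypothesis needed to run the standard ``graded contraction'' argument: the map $\psi_*$ is split injective with left inverse $(\phi_1)_* = \mathrm{id}$, while $(\phi_0)_* \circ \psi_* = \eta_*$ and $\i_* \circ \eta_* $ is the identity composed appropriately, so that $\i_* \colon \W_{\SL}(R) \to \W_{\SL}(A)$ admits $\eta_*$ as a two‑sided inverse. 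Concretely, $\eta_* \circ \i_* = \mathrm{id}_{\W_{\SL}(R)}$ because $\eta \circ \i = \mathrm{id}_R$, and conversely $\i_* \circ \eta_* = (\phi_0)_* \circ \psi_* = (\phi_1)_* \circ \psi_* = \mathrm{id}_{\W_{\SL}(A)}$, the middle equality holding because $\phi_0$ and $\phi_1$ agree after composition with the Karoubi isomorphism. This is exactly the content packaged in \cite[Lemma 5.7]{anderson1978projective}, so I would simply cite it once the diagram and Karoubi's isomorphism are in place, noting that the hypothesis $2R = R$ is what makes the symplectic Witt group theory of \cite{vasersteinSuslin1976serre} and the cited lemmas applicable in this setting.

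The one genuine point requiring care — the ``main obstacle'' such as it is — is verifying that $\phi_0$ and $\phi_1$ induce the \emph{same} map on $\W_{\SL}$, i.e.\ that the Karoubi isomorphism really does collapse the two evaluations; this is where homotopy invariance does its work, and it is the step one should not take for granted even though it is standard. Everything else is formal diagram‑chasing: once $(\phi_0)_* = (\phi_1)_*$ on $\W_{\SL}(A[t]) \cong \W_{\SL}(A)$, the relations $\eta_* \i_* = \mathrm{id}$ and $\i_* \eta_* = \mathrm{id}$ follow immediately from the commutativity of the displayed diagram, giving the asserted isomorphism $\W_{\SL}(A) \cong \W_{\SL}(R)$. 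Since the excerpt already attributes the needed facts to Karoubi and to Anderson–Geramita, the proof is short and consists essentially of assembling these ingredients.
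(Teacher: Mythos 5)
Your proposal is correct and follows essentially the same route as the paper: Karoubi's homotopy invariance of $\W_{\SL}$ under $A \hookrightarrow A[t]$ (which is where $2R=R$ enters), combined with the Swan--Weibel contraction argument packaged in \cite[Lemma 5.7]{anderson1978projective}, exactly as the paper's two-line proof does. The only blemish is the intermediate assertion ``$(\phi_0)_* \circ \psi_* = \eta_*$'', which should read $\i_* \circ \eta_*$; your subsequent concrete chain $\i_*\circ\eta_* = (\phi_0)_*\circ\psi_* = (\phi_1)_*\circ\psi_* = \mathrm{id}$ is the correct one, so this is only a slip of notation.
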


Let $\eta_* : \W_{\E}(A) \rightarrow \W_E(R)$ be the map induced by $\eta$. In order to streamline the discussion, we set $\NW_{\E}(A) = \ker \eta_*$.

A careful perusal of  \cite[Proposition 2.4.1]{rao1991completing} by Rao actually gives the following. 
\begin{proposition}\label{Raodiv}
   Let $R$ be a ring such that $2kR= R$ for $k \in \NN$, then $\NW_{\E}(R[t]) = k\NW_{\E}(R[t])$.  
\end{proposition}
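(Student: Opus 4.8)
The plan is to reduce the claim for the polynomial ring $R[t]$ to an inductive application of the one-variable case $\NW_{\E}(R_0[t]) = k\NW_{\E}(R_0[t])$, where the result is genuinely Rao's computation \cite[Proposition 2.4.1]{rao1991completing}; since the statement only asserts what \cite{rao1991completing} already gives after careful reading, the real content is in unwinding the Swan–Weibel/homotopy machinery that Rao uses. First I would recall that $\W_{\E}$ is a functor and that, since $R$ is graded trivially here (it is just a ring), $R[t] = R \oplus Rt \oplus Rt^2 \oplus \cdots$ with the usual grading; the augmentation $\eta : R[t] \to R$ sending $t \mapsto 0$ makes $\W_{\E}(R[t]) \cong \W_{\E}(R) \oplus \NW_{\E}(R[t])$ compatibly with the section $R \hookrightarrow R[t]$. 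Thus divisibility of $\W_{\E}(R[t])$ by $k$ relative to $R$ is exactly divisibility of the kernel summand $\NW_{\E}(R[t])$.

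Next I would invoke Karoubi's theorem (quoted above via \cite[Corollary 3.2]{vasershtein1974serre}) that the inclusion $R[t] \hookrightarrow R[t][s] = R[t,s]$ induces an isomorphism on $\W_{\SL}$, together with the Swan–Weibel homotopy diagram of \S\ref{SW} applied with the base ring $R[t]$ in place of $A$ and the extra variable playing the role of the polynomial indeterminate. The key mechanism in Rao's argument is: an element of $\NW_{\E}(R[t])$, being in the kernel of the augmentation, can be represented (after stabilizing) by an alternating matrix $\alpha(t)$ of Pfaffian one with $\alpha(0) \sim \Psi_r$; the hypothesis $2k R = R$ means $\tfrac{1}{2k} \in R$, so one may substitute $t \mapsto \tfrac{t}{k}$ — or rather exploit that the "homotopy" connecting $\alpha(t)$ to $\alpha(0)$ can be rescaled — to write $[\alpha(t)] = k \cdot [\beta(t)]$ for a suitable $\beta \in \NW_{\E}(R[t])$. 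Concretely I would follow Rao's scheme: use that $2$ is invertible to split off the "symmetric part," use that $k$ is invertible to perform the substitution $t \rightsquigarrow t/k$ on the homotopy parameter, and then read the resulting identity in the Witt group as a $k$-th multiple. One must check that each of these operations preserves membership in $\NW_{\E}$ (i.e., vanishing under $\eta_*$), which is immediate since all substitutions fix $t = 0$.

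The main obstacle I anticipate is bookkeeping the interaction between the two group structures on the symplectic Witt group — the "addition" via orthogonal sum $\perp$ and the multiplicative language of $[\v]^k$ elsewhere in the paper — and making sure the rescaling argument genuinely lands in the elementary Witt group $\W_{\E}$ rather than only in $\W_{\SL}$; this is where $2R = R$ (for the Karoubi/Anderson homotopy invariance, Lemma \ref{34}) and the full strength of $2kR = R$ (for the $k$-divisibility rescaling) are both needed, and conflating them is the easy error to make. A clean way to organize this is: (i) prove $\W_{\E}(R[t]) \cong \W_{\E}(R) \oplus \NW_{\E}(R[t])$; (ii) show $\NW_{\E}(R[t])$ is a quotient (or subquotient) of $\NW_{\E}$ of a ring where Rao's one-variable statement applies verbatim; (iii) quote \cite[Proposition 2.4.1]{rao1991completing} and push the divisibility through the functorial maps of step (ii). Since the statement we are asked to prove is essentially a re-extraction of \cite[Proposition 2.4.1]{rao1991completing}, I would keep the proof short, state precisely which homotopy and which rescaling are used, and defer the detailed matrix manipulations to Rao's paper rather than reproducing them.
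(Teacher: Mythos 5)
Your bottom line matches the paper exactly: the paper gives no proof of Proposition \ref{Raodiv} at all, saying only that a careful perusal of Rao's \cite[Proposition 2.4.1]{rao1991completing} already yields the statement, so your plan of quoting Rao and deferring the matrix manipulations to his paper is precisely what is done. Be aware, though, that your sketched mechanism is not a viable substitute for Rao's argument: the substitution $t \mapsto t/k$ is a ring automorphism of $R[t]$ fixing $R$ and hence induces an automorphism of $\NW_{\E}(R[t])$, which cannot by itself produce $k$-divisibility, and your step (ii) ``reduction to the one-variable case'' is vacuous, since $R[t]$ over the base $R$ is already exactly the setting of Rao's proposition.
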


We generalize Rao's result to graded subalgebras of $R[t]$ as follows.

\begin{proposition}\label{div}
	Let $R$ be a ring such that $2kR= R$ for $k \in \NN$ and $A$ be a graded $R$-algebra. Then $\NW_{\E}(A) = k \NW_{\E}(A)$. 
	Moreover, if $\maxSpec R$ is a union of finitely many Noetherian subspaces of dimension at most $1$, then $\W_{\E}(A)$ is $k$-divisible.
\end{proposition}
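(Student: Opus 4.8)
The plan is to reduce the statement about the graded $R$-algebra $A$ to Rao's result (Proposition \ref{Raodiv}) via the Swan–Weibel homomorphism. First I would set up the factorization of the identity on $A$ through $A[t]$: the Swan–Weibel map $\psi : A \to A[t]$ together with the evaluation $\phi_1 : A[t] \to A$ satisfies $\phi_1 \circ \psi = \mathrm{id}_A$, while $\phi_0 \circ \psi = \i \circ \eta$, where $\eta : A \to R$ is the augmentation and $\i : R \to A$ the structure map. Passing to elementary symplectic Witt groups, $\psi_*$ is a split injection with retraction $(\phi_1)_*$, and the diagram in \S\ref{SW} gives $(\phi_0)_* \circ \psi_* = \i_* \circ \eta_*$. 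Since $\NW_{\E}(A) = \ker \eta_*$, I would first check that $\psi_*$ restricts to a map $\NW_{\E}(A) \to \NW_{\E}(A[t])$: for $w \in \NW_{\E}(A)$ we have $\eta_*(w) = 0$, and applying the augmentation of $A[t]$ (which factors appropriately through $\eta$ after setting $t = 0$ and then killing $A_+$) kills $\psi_*(w)$. Conversely $(\phi_1)_*$ maps $\NW_{\E}(A[t])$ back into $\NW_{\E}(A)$ because $\phi_1$ is compatible with the augmentations. Thus $\psi_*$ realizes $\NW_{\E}(A)$ as a direct summand of $\NW_{\E}(A[t])$ (in fact I expect it is an isomorphism onto $\NW_{\E}(A[t])$, but a split injection already suffices).

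Given this, the $k$-divisibility of $\NW_{\E}(A)$ follows: take $w \in \NW_{\E}(A)$. Then $\psi_*(w) \in \NW_{\E}(A[t])$, and by Proposition \ref{Raodiv} there is $w' \in \NW_{\E}(A[t])$ with $k w' = \psi_*(w)$. Applying the retraction $(\phi_1)_*$ and using that it is a group homomorphism, we get $k\, (\phi_1)_*(w') = (\phi_1)_*(\psi_*(w)) = w$, and $(\phi_1)_*(w') \in \NW_{\E}(A)$ by the compatibility noted above. Hence $w \in k\,\NW_{\E}(A)$, proving $\NW_{\E}(A) = k\,\NW_{\E}(A)$.

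For the second assertion, assume $\maxSpec R$ is a union of finitely many Noetherian subspaces of dimension at most $1$. Then $\W_{\E}(R)$ is $k$-divisible: indeed for a ring of dimension $\leq 1$ the orbit space $\Um_3/\E_3$ and $\W_{\E}$ coincide and $\NW_{\E}(R)$ agrees with $\W_{\E}(R)$ up to the contribution of $R$ itself; more directly, one uses the split exact sequence $0 \to \NW_{\E}(R) \to \W_{\E}(R) \xrightarrow{\eta_*'} \W_{\E}(R) \to 0$ coming from $R \to R \to R$ — but here $R$ is already the base, so in fact $\eta = \mathrm{id}$ and the relevant input is that $\W_{\E}$ of a ring of dimension $\le 1$ is $k$-divisible. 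I would then use the split exact sequence of abelian groups
\[
0 \longrightarrow \NW_{\E}(A) \longrightarrow \W_{\E}(A) \xrightarrow{\;\eta_*\;} \W_{\E}(R) \longrightarrow 0,
\]
which is split by $\i_*$ (since $\eta \circ \i = \mathrm{id}_R$). A short diagram chase then shows that if both $\NW_{\E}(A)$ and $\W_{\E}(R)$ are $k$-divisible, so is the middle term $\W_{\E}(A)$: given $x \in \W_{\E}(A)$, first divide $\eta_*(x)$ to find $y \in \W_{\E}(R)$ with $k\,\eta_*(y)$-part handled, correct $x$ by $k\,\i_*(y)$ so that the difference lies in $\NW_{\E}(A)$, then divide there.

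The main obstacle I anticipate is the bookkeeping needed to check that $\psi_*$ and $(\phi_1)_*$ genuinely respect the "$\NW$" subgroups — i.e., that the augmentation ideals match up correctly under the Swan–Weibel map. This is really a matter of verifying commutativity of the enlarged diagram in \S\ref{SW} after applying the functor $\W_{\E}(-)$ and then reading off kernels, together with invoking Karoubi's homotopy invariance (Lemma \ref{34} and the remark preceding it) to identify $\W_{\SL}(A) \cong \W_{\SL}(A[t])$ so that the elementary symplectic Witt groups behave as expected. Everything else is formal manipulation with split exact sequences of abelian groups.
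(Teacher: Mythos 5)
Your overall strategy (push $\NW_{\E}(A)$ into $\NW_{\E}(A[t])$ via the Swan--Weibel map, divide there using Proposition \ref{Raodiv}, and return with $\phi_{1*}$) is exactly the paper's, but there is a genuine gap at the step where you assert that $\phi_{1*}$ maps $\NW_{\E}(A[t])$ back into $\NW_{\E}(A)$ ``because $\phi_1$ is compatible with the augmentations.'' It is not: while $\phi_0 \circ \psi = \i \circ \eta$, there is no analogous factorization of $\eta \circ \phi_1$ through $\phi_0$. Writing $\eta[t] : A[t] \rightarrow R[t]$ for the map applying $\eta$ coefficientwise, one has $\eta \circ \phi_1^{A} = \phi_1^{R} \circ \eta[t]$, so for $z \in \NW_{\E}(A[t]) = \ker \phi_{0*}$ all you get is $\eta_*\phi_{1*}(z) = \phi_{1*}^{R}\bigl(\eta[t]_*(z)\bigr)$ with $\eta[t]_*(z) \in \NW_{\E}(R[t])$. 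Since $\eta[t]_*$ is split surjective onto $\NW_{\E}(R[t])$ (via $\i[t]$), your claim amounts to asserting that evaluation at $t=1$ annihilates $\NW_{\E}(R[t])$ for the arbitrary ring $R$ of the proposition; that is a homotopy-invariance-type statement for $\W_{\E}$ which is precisely what fails in general (and what $\NW_{\E}$ is designed to measure) --- if it were available, Proposition \ref{Raodiv} and the whole $\NW_{\E}$ apparatus would be unnecessary. Consequently, in your divisibility argument you cannot conclude that the element $(\phi_1)_*(w')$ with $k\,(\phi_1)_*(w') = w$ lies in $\NW_{\E}(A)$; a priori $\eta_*\bigl((\phi_1)_*(w')\bigr)$ is only $k$-torsion, so what you have actually proved is $\NW_{\E}(A) \subset k\,\W_{\E}(A)$, not $\NW_{\E}(A) = k\,\NW_{\E}(A)$.

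The repair is the one extra step the paper takes, and you already have the tool in hand since you invoke the split sequence in your second part: the split exact sequence $0 \rightarrow \NW_{\E}(A) \rightarrow \W_{\E}(A) \xrightarrow{\eta_*} \W_{\E}(R) \rightarrow 0$ provides a retraction $\pi_* = \mathrm{id} - \i_*\eta_* : \W_{\E}(A) \rightarrow \NW_{\E}(A)$ restricting to the identity on $\NW_{\E}(A)$; applying it to $\NW_{\E}(A) \subset k\,\W_{\E}(A)$ gives $\NW_{\E}(A) \subset k\,\NW_{\E}(A)$. For the second assertion your splitting argument is fine in outline, but the base input should simply be that under the hypothesis $\W_{\E}(R) \cong \frac{\Um_3(R)}{\E_3(R)}$ is trivial (see \S\ref{swg}), whence $\NW_{\E}(A) = \W_{\E}(A)$ and $k$-divisibility of $\W_{\E}(A)$ follows immediately; the auxiliary sequence ``$0 \rightarrow \NW_{\E}(R) \rightarrow \W_{\E}(R) \rightarrow \W_{\E}(R) \rightarrow 0$'' you sketch does not make sense as written.
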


\begin{proof} The following commutative diagram is a consequence of the discussion in  \S \ref{SW}.

\[
\xymatrix{& \NW_{\E}(A)\ar[d]\ar[r]^{\psi_*}& \NW_{\E}(A[t])\ar[d]\\
  \W_{\E}(R) \ar[rd]_{Id} \ar[r]^{i_{*}} & \W_{\E}(A) \ar[d]_{\eta_{*}} \ar[r]^{\psi_{*}} & \W_{\E}(A[t]) \ar[d]^{\phi_{0*}} \ar[r]^{\phi_{1*}} & \W_{\E}(A)   \\
    & \W_{\E}(R) \ar[r]^{i_{*}} & \W_{\E}(A)  & 
}
\]

The diagram yields $\psi_{*}(\NW_{\E}(A)) \subset \NW_{\E}(A[t])$. We have $\NW_{\E}(A[t]) = k \NW_{\E}(A[t])$  by Proposition \ref{Raodiv}.
Therefore, $\psi_{*}(\NW_{\E}(A)) \subset k \NW_{\E}(A[t])$.
Taking images of both sides under $\phi_{1*}$, we get $\NW_{\E}(A)\subset k\W_{\E}(A)$ since $\phi_{1*} \circ \psi_{*}= id_{W_{\E}(A)}$.

  The short exact sequence $0 \rightarrow \NW_{\E}(A) \rightarrow \W_{\E}(A) \xrightarrow{\eta_{*}} \W_{\E}(R) \rightarrow 0$ of abelian groups splits, so there is a surjective homomorphism $\pi_{*} : \W_{\E}(A) \rightarrow \NW_{\E}(A)$ which restricts to the identity on $\NW_{\E}(A)$. Applying $\pi_{*}$ to both sides of $\NW_{\E}(A)\subset k\W_{\E}(A)$, we get $\NW_{\E}(A)\subset k\NW_{\E}(A)$. The proof follows as the reverse inclusion is clear. 

For the last assertion, we observe that under the hypothesis $\W_{\E}(R) = \frac{\Um_3(R)}{\E_3(R)}$ is trivial, see \S \ref{swg}. Therefore, $\NW_{\E}(A)= \W_{\E}(A)$ and consequently $\W_{\E}(A)$ is $k$-divisible. 
\end{proof}

The following lemma and the subsequent remark provide a framework to simplify the proof of the main results.

\begin{lemma}\label{mainprin}
    Let $R$ be a ring of dimension $d$ $(\geq 2)$  and $I \subset \rad (R)$ be an ideal which contains a nonzero divisor $s$.  Let $A =  \oplus_{i\geq 0} I_i t^i$ be a finitely generated graded $R$-subalgebra of $R[t]$. Then the natural map $\displaystyle \frac{\Um_{d + 1}(A, sA)}{\E_{d + 1}(A, sA)} \longrightarrow \frac{\Um_{d + 1}(A, IA)}{\E_{d + 1}(A, IA)}$ is surjective. 
    \end{lemma}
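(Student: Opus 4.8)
The plan is to reduce the statement to the special case where $I$ is a principal ideal generated by a nonzero divisor, and then leverage the fact that every element of $\Um_{d+1}(A, IA)$ can be normalized so that its first coordinate is congruent to $1$ modulo $sA$. More precisely, let $\v = (v_0, v_1, \dots, v_d) \in \Um_{d+1}(A, IA)$; I want to exhibit a row $\w \in \Um_{d+1}(A, sA)$ with $\w \sim_{\E_{d+1}(A, IA)} \v$. Since $s \in \rad(R)$ is a nonzero divisor, Example \ref{prex}(1) guarantees that the orbit space $\frac{\Um_{d+1}(A, IA)}{\E_{d+1}(A, IA)}$ (and likewise for $sA$, since $sA \subset \rad(R)A$) carries the van der Kallen group structure, so the induced map in the statement is a group homomorphism; it then suffices to show surjectivity on representatives.

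The first step is to arrange that the first coordinate $v_0$ of $\v$ is an $s$-monic polynomial. Here I would invoke Lemma \ref{amonic}: writing $A = \oplus I_i t^i$, if $I_1 \neq 0$ we may pick a nonzero $a \in I_1 \cap \rad(R)$ — but we are not free to choose $a = s$ unless $s \in I_1$. To circumvent this, note that $\v \in \Um_{d+1}(A, IA) \subset \Um_{d+1}(A, sA + \text{higher})$; actually the cleaner route is: since $v_0 \equiv 1 \pmod{IA}$ and $IA \subset \rad(R)A \subset \rad(A)$, after an elementary transformation we can make $v_0 - 1 \in IA$ with $v_0$ itself a unit mod $(v_1, \dots, v_d)$. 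The key reduction is to work in the localization $A_s = R_s[t]$ (by Lemma \ref{maindim} this has dimension $\le d$, and in fact we want the polynomial-ring structure). Over $A_s = R_s[t]$, the row $\v$ becomes unimodular, and by Theorem \ref{AG} applied to $S = \{s^n\}$ and $R = S^{-1}A = A_s$, for the multiplicative set generated by $s$ we can find $c_1, \dots, c_{d+1} \in sA$ with $(v_1 + c_1 v_0, \dots)$ shorter-unimodular over $A_s$, hence the ideal $(v_1, \dots, v_d)A$ after a suitable $\E_{d+1}(A, IA)$-move contains an $s$-monic polynomial. Adding a large multiple of $s \cdot (s t)^m \cdot (\text{that polynomial})$ to $v_0$ — using $st \in A$ when $s \in I_1$, or more generally using that $s^j t^j \in sA \cdot A$ for appropriate $j$ — we make $v_0$ an $s$-monic polynomial, all via $\E_{d+1}(A, IA)$-operations.

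Once $v_0$ is $s$-monic, I claim $\v$ can be brought, by $\E_{d+1}(A, IA)$-operations, into a row lying in $\Um_{d+1}(A, sA)$. The idea: since $v_0$ is $s$-monic, $v_0$ is a unit modulo $sA$ is false, but $v_0$ becomes monic (hence one may reduce the other coordinates modulo $v_0$) in $A_s = R_s[t]$; the standard trick is that for any $i \ge 1$, $v_i \equiv r_i \pmod{v_0 A_s}$ for some constant-ish part, and one can clear the $I$-part that is not in $sA$. Concretely, write $I = (s, b_1, \dots, b_r)$; since $v_i \in IA$ for $i \ge 1$ and $v_0 \equiv 1$, a Quillen-type patching / division argument using $s$-monicity of $v_0$ lets us add $A$-multiples of $v_0$ to each $v_i$ ($i \ge 1$) to kill the $(b_1, \dots, b_r)$-components modulo $sA$, so that after these moves $v_i \in sA$ for $i \ge 1$ and $v_0 \equiv 1 \pmod{sA}$ as well (the last because $1 - v_0 \in IA$ and $v_0$ $s$-monic forces, after the clearing, $1 - v_0 \in sA$). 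This yields $\w := \v \cdot \varepsilon \in \Um_{d+1}(A, sA)$ for suitable $\varepsilon \in \E_{d+1}(A, IA)$, which is exactly what surjectivity requires.

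The main obstacle I anticipate is the division/clearing step in the last paragraph: making precise the passage "$v_0$ is $s$-monic, therefore modulo $v_0$ every other coordinate's $I$-part can be pushed into $sA$." This requires care because $A$ is only a graded subalgebra of $R[t]$, not all of $R[t]$, so one cannot divide by $v_0$ inside $A$; the correct formulation is to divide inside $A_s = R_s[t]$ and then clear denominators, exploiting that multiplying by a high power of $s$ (equivalently by $(st)^m$ up to constants) keeps us inside $A$ — this is the same mechanism used in Lemmas \ref{amonic} and \ref{RT0}. A secondary subtlety is verifying that all the elementary moves genuinely lie in $\E_{d+1}(A, IA)$ and not merely $\E_{d+1}(A, sA+\cdots)$; this is automatic as long as every added multiple is an element of $IA$, which the construction ensures.
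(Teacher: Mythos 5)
There is a genuine gap, and it sits in both halves of your argument. First, your route runs through the $s$-monic machinery (Lemma \ref{amonic}, the Roitman-style moves, and the identification of a localization of $A$ with a polynomial ring), and all of that requires $st \in A$, i.e.\ $s \in I_1$, equivalently $A_s = R_s[t]$. The present lemma assumes nothing of the sort: $I_1$ may well be zero (e.g.\ $A = R[xt^2]$, where $A_s = R_s[xt^2] \neq R_s[t]$). You notice this ("we are not free to choose $a = s$ unless $s \in I_1$"), but the proposed circumvention still invokes "$A_s = R_s[t]$". In the paper's architecture this lemma is precisely the tool one needs \emph{before} the hypothesis $s \in I_1$ can be arranged (via Lemmas \ref{gradsub}, \ref{subrel} inside Lemma \ref{finite}), so assuming it here is circular. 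Moreover, even granting $s \in I_1$, applying Theorem \ref{AG} to $A_s$ alone does not work: one only has $\dim A_s \leq d$, while a row of length $d+1$ needs the localized ring to have dimension at most $d-1$; Lemma \ref{amonic} achieves this by inverting \emph{all} $a$-monic polynomials, not just powers of $s$.

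Second, the "clearing" step --- that once $v_0$ is $s$-monic the remaining coordinates can be pushed into $sA$ by adding $A$-multiples of $v_0$ --- is exactly the content of the lemma, and you leave it as an acknowledged obstacle; there is no evident division argument, since $v_0A + sA$ need not contain $IA$. The paper's proof avoids monicity altogether and works modulo $sA$: writing $I = (x_1,\ldots,x_r)$ and letting overline denote reduction mod $sA$, one has $\maxSpec \overline{A} \setminus V_M(\overline{I}) = \cup_{i} D_M(\overline{x_i})$ with $\overline{x_i} \in \rad(\overline{R})$, so Lemma \ref{maindim} gives $\dim (\overline{A})_{\overline{x_i}} \leq d-1$; then van der Kallen's relative transitivity result \cite[Proposition 2.4]{van1983group} yields $\overline{\v} \sim_{\E_{d+1}(\overline{A}, \overline{IA})} e_1$, and lifting those elementary matrices to $A$ produces $\v' \in \Um_{d+1}(A, sA)$ with $\v \sim_{\E_{d+1}(A, IA)} \v'$. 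To salvage your approach you would need an independent proof of this transitivity statement over $A/sA$; the monic/degree-reduction machinery is not the right tool at this stage.
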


\begin{proof}
Let $\v \in \Um_{d + 1}(A, IA)$.  We need to find $\v' \in \Um_{d + 1}(A, sA)$ such that $\v \sim_{\E_{d + 1}(A, IA)} \v'$.  Let overline denote the image modulo the ideal $sA$. If $I = (x_1, \ldots, x_r)$, we have 
$$\maxSpec \overline{A}  = V_M(\overline{I}) \cup D_M(\overline{I}) = V_M(\overline{I}) \cup \Big(\cup_{i = 1}^r D_M(\overline{x_i})\Big).$$ 
Here $V_M(\overline{I}) = \maxSpec \frac{A}{sA + I}$ and $D_M(\overline{x_i}) \subset \maxSpec (\frac{A}{sA})_{\overline{x_i}}$. 

We know that $\dim A \leq d + 1$, so $\dim A/ sA \leq d$. It is clear that $\overline{x_i} \in \overline{\rad(R)} \subset \rad(\overline{R})$.
Therefore, by Lemma \ref{maindim}, we deduce  $\dim (\frac{A}{sA})_{\overline{x_i}} \leq d -1$ and consequently $\dim [ D_M(\overline{x_i})] \leq d - 1$ for $i = 1, \ldots, r$. 
It follows that $\maxSpec \overline{A} \setminus V_M(\overline{I})$ is a union of $r$ Noetherian subspaces of dimension at most $d - 1$. Applying \cite[Proposition 2.4]{van1983group}, we get $\overline{\v} \sim_{\E_{d + 1}(\overline{A}, \overline{IA})} e_1$. Lifting the elementary operations, we obtain $\v \sim_{\E_{d + 1}(A, IA)} \v'$ for some $\v' \in \Um_{d + 1}(A, sA)$. Hence, the lemma follows. 
\end{proof}

\begin{remark}\label{mainreduct}
Suppose $R$ is a ring of dimension $d \geq 1$ and $I \subset \rad(R)$ is an ideal. We assume that $A$ is a finitely generated graded $R$-subalgebra of the polynomial ring $R[t]$. The orbit space $\frac{\Um_{d + 1}(A, IA)}{\E_{d +1}(A, IA)}$ has an abelian group structure, see Example \ref{prex}.

 Suppose our purpose is to verify if  $\frac{\Um_{d + 1}(A, IA)}{\E_{d + 1}(A, IA)}$ is $k$-divisible for some $k \in \NN$. By Corollary \ref{nzd}, there is an isomorphism $\displaystyle \frac{\Um_{d+1}(A, IA)}{\E_{d+1}(A, IA)} \longrightarrow \frac{\Um_{d+1}(\overline{A},\overline{IA})}{\E_{d+1}(\overline{A},\overline{IA})}$ where overline denotes the image in $A/ \Gamma_{IA} (A)$. As the property of $k$-divisibility transfers along an isomorphism of orbit spaces, it is enough to show that $\frac{\Um_{d+1}(\overline{A},\overline{IA})}{\E_{d+1}(\overline{A},\overline{IA})}$ is $k$-divisible. 
If $\dim (R/ \Gamma_I(R))< d$, then $\frac{\Um_{d+1}(\overline{A},\overline{I A})}{\E_{d+1}(\overline{A},\overline{I A})}$  is trivial, see \cite[Theorem 3.5]{garggupta2025graded}. If $\overline{I} =0$, then also $\frac{\Um_{d+1}(\overline{A},\overline{I A})}{\E_{d+1}(\overline{A},\overline{I A})}$ is trivial. A trivial orbit space is $k$-divisible, so the only case of interest is when $\dim (R / \Gamma_I(R)) = d$ and $\overline{I} \neq 0$. We have inclusions of rings $R/ \Gamma_I(R) \hookrightarrow A/ \Gamma_{IA} (A) \hookrightarrow R/ \Gamma_I(R) [t]$ and $\overline{I} \subset \rad(\overline{R})$. Therefore, it suffices to assume that $I$ contains a nonzero divisor. 

Due to Lemmas \ref{pkhom} and \ref{mainprin}, it is enough to further assume that $I$ is a proper principal ideal generated by a nonzero divisor. We have inclusions $R_{red} \hookrightarrow A_{red} \hookrightarrow R_{red}[t]$ of rings, where $R_{red} = R/ \Nil R$, $A = A/ \Nil A$; and a bijection $\displaystyle \frac{\Um_{d + 1}(A, IA)}{\E_{d + 1}(A, IA)} \longrightarrow \frac{\Um_{d + 1}(A_{red}, IA_{red})}{\E_{d + 1}(A_{red}, IA_{red})}$ by Lemma \ref{26}. The property of being nonzero divisors is retained in the passage to the quotient ring $R_{red}$. 

Hence, in our quest of the property of $k$-divisibility of $\frac{\Um_{d + 1}(A, IA)}{\E_{d + 1}(A, IA)}$, we may say that it is enough to assume further that $R$ is a reduced ring and $I \subset \rad (R)$ is a proper principal ideal generated by a nonzero divisor. An argument verbatim as above enables us to say the same when our interest is to verify if the orbit space $\frac{\Um_{d + 1}(A, IA)}{\E_{d + 1}(A, IA)}$ is good. 
\end{remark}
The subsequent lemma serves as a further aid in simplifying the proofs of ensuing results.
\begin{lemma} \label{47}
Let $R$ be a ring of dimension $d (\geq 2)$, $A$ be a graded $R$-subalgebra of $R[t]$ and $I =\oplus_{i\geq 0} I_i$ be a homogeneous ideal of $A$ such that  $I_0 \subset \rad (R)$. Then for any $\v \in \Um_{d + 1}(A, I)$ there exists a finitely generated graded $R$-subalgebra $B = \oplus_{i \geq 0}B_i$ of $A$ such that  $B_+ = \oplus_{i \geq 1} B_i \subset I$ and $\v \sim_{\E_{d + 1}(A, I)} \v'$ for some $\v' \in \Um_{d + 1}(B, B_{+})$.
In particular $[\v]$ is contained in the image of $\displaystyle \frac{\Um_{d + 1}(B, B_{+})}{\E_{d + 1}(B, B_{+})} \longrightarrow \frac{\Um_{d + 1}(A, I)}{\E_{d + 1}(A, I)}$. 
\end{lemma}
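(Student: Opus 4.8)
The plan is to write $\v = (v_0, \ldots, v_d)$ with $v_i \in A$, and to let $B$ be the $R$-subalgebra of $A$ generated by $R$ together with the homogeneous components of the $v_i$ that lie in $A_+$, plus finitely many extra homogeneous elements needed to witness unimodularity. Concretely, write $v_i = \sum_{j \geq 0} v_{ij}$ with $v_{ij} \in A_j$. Since $\v \in \Um_{d+1}(A, I)$, there exist $p_0, \ldots, p_d \in A$ with $\sum_i v_i p_i = 1$; expand the $p_i$ into homogeneous components $p_{ij} \in A_j$ as well. I would take $B_0 = R$ and let $B$ be the graded $R$-subalgebra of $A$ generated by $R$ and all the finitely many homogeneous elements $v_{ij}$ and $p_{ij}$ of positive degree. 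Then $B = \oplus_{i \geq 0} B_i$ is a finitely generated graded $R$-subalgebra of $A$ with $B_0 = R$, and by construction each $v_i \in B$ and the relation $\sum_i v_i p_i = 1$ holds in $B$, so $\v \in \Um_{d+1}(B)$.

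The next point is to arrange that $B_+ \subset I$ and that the orbit of $\v$ meets $\Um_{d+1}(B, B_+)$. Here I would exploit that $I = \oplus_{i \geq 0} I_i$ is homogeneous with $I_0 \subset \rad(R)$, and that $\v \equiv e_1 \pmod I$ forces $v_0 \equiv 1 \pmod{I}$ and $v_i \equiv 0 \pmod I$ for $i \geq 1$. Comparing homogeneous components: the positive-degree components $v_{0j}$ ($j \geq 1$) lie in $I_j$, and for $i \geq 1$ every component $v_{ij}$ lies in $I_j$; the only component possibly outside $I$ is the degree-$0$ part $v_{00} = 1 + (\text{element of } I_0)$. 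So if I throw away the constant terms — i.e. replace $v_0$ by $v_0 - v_{00} + 1$ and $v_i$ by $v_i - v_{i0}$ for $i \geq 1$ — all positive-degree generators I adjoined already lie in $I$, hence $B_+ \subset I$ automatically once $B$ is generated by the $v_{ij}$, $p_{ij}$ with $j \geq 1$ (I must also check the $p_{ij}$ of positive degree lie in $I$; this follows since modulo $I$ the relation $\sum v_i p_i = 1$ reduces to $v_{00} p_{00} \equiv 1$, and one can absorb the positive-degree $p$-components into $I$ after adjusting, or simply enlarge $B$ by elements of $I$ — note we are free to adjoin any homogeneous elements of $I_{>0}$ we like). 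The passage from $\v$ to the "constant-term-only" row $(1 + v_{00} - 1, v_{10}, \ldots, v_{d0}) = (v_{00}, v_{10}, \ldots, v_{d0})$ and then to $\v$ itself should be realizable by $\E_{d+1}(A, I)$-operations: the difference between $\v$ and a row supported in $B$ with $B_+ \subset I$ is built from elements of $I$, so I can use elementary operations in $\E_{d+1}(A,I)$, or the standard trick of splitting off the degree-zero part. I would make this precise using that $v_{00}$ is a unit modulo $I_0 \subset \rad(R)$, hence a unit in $R$, so after scaling $(v_{00}, v_{10}, \ldots, v_{d0}) \sim_{\E_{d+1}(R, I_0)} e_1$, and the positive-degree corrections are added back by elementary operations with entries in $B_+ \subset I$.

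The "in particular" clause is then immediate: the constructed $\v'$ lies in $\Um_{d+1}(B, B_+)$, and the inclusion $B \hookrightarrow A$ sends $I$-congruences to $I$-congruences (as $B_+ \subset I$ and $B_0 = R$), so it induces a map of orbit spaces $\frac{\Um_{d+1}(B, B_+)}{\E_{d+1}(B, B_+)} \to \frac{\Um_{d+1}(A, I)}{\E_{d+1}(A, I)}$ under which $[\v']$ maps to $[\v]$.

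**Main obstacle.** The delicate step is the bookkeeping that converts $\v$ into a row $\v'$ genuinely supported on a finitely generated graded subalgebra $B$ with $B_+\subset I$, while only using $\E_{d+1}(A,I)$-operations and not $\E_{d+1}(A)$-operations — i.e. making sure every elementary operation used has its off-diagonal entry in $I$, not merely in $A$. The subtlety is that the relation $\sum v_i p_i = 1$ involves the $p_i$, whose positive-degree components need not lie in $I$ a priori, so naively adjoining them would violate $B_+ \subset I$; the fix is to observe that, modulo $I$, unimodularity of $\v'$ is witnessed by the single degree-zero congruence $v_{00} p_{00} \equiv 1 \pmod{I_0}$ (which holds since $v_{00}$ is a unit in $R$), so one only needs to adjoin positive-degree elements of $I$ to build a certificate, and those can be chosen inside $I$ because the corresponding syzygy can be solved within the ideal $B_+$. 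Carrying this out carefully — essentially a graded, relative refinement of the standard "a unimodular row over a graded ring is elementarily equivalent to one supported in a finitely generated graded subring" argument — is where the real work lies.
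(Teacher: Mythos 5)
Your outline follows the same two-step strategy as the paper's proof (use $I_0 \subset \rad(R)$ to normalize the degree-zero part, then generate $B$ by the homogeneous components of the row and of a unimodularity certificate), but the step you yourself defer as "where the real work lies" is a genuine gap, and it is precisely the point that has to be handled. Your $B$ is built from the components $v_{ij}$ and $p_{ij}$, yet the positive-degree components of the $p_i$ need not lie in $I$, and your proposed repairs ("absorb the positive-degree $p$-components into $I$ after adjusting", "the corresponding syzygy can be solved within the ideal $B_+$") are assertions, not arguments; there is also a circularity, since you want to solve the syzygy inside $B_+$ while $B$ is supposed to be generated by the components of the very certificate you have not yet produced. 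The missing ingredient is Vaserstein's relative complement lemma (recalled in Section 2 of the paper, from \cite{vaserstein1969stabilization}): once $\v$ has been moved by $\E_{d+1}(A,I)$-operations to some $\v' \in \Um_{d+1}(A, I_+)$, where $I_+ = \oplus_{i\geq 1} I_i$ (a homogeneous ideal of $A$), that lemma yields $\w' \in \Um_{d+1}(A, I_+)$ with $\v'\,\w'^{\,t} = 1$. Because $I_+$ is homogeneous and the degree-zero parts of $\v'$ and $\w'$ are exactly those of $e_1$, every positive-degree homogeneous component of their entries lies in $I_+$, so the graded $R$-subalgebra $B$ generated by all these components satisfies $B_+ \subset I_+ \subset I$ and contains both the row and its certificate, giving $\v' \in \Um_{d+1}(B, B_+)$. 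The correct order is therefore: first reduce, then choose the complement relative to $I_+$, and only then define $B$ --- not define $B$ first and hope to fix it afterwards.

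The reduction step also needs restating: the passage from $\v$ to its "constant-term-only" row and back is not achieved by "adding back the positive-degree corrections by elementary operations", since elementary moves only add multiples of the other coordinates of the row. What works (and is what the paper does, phrased as reduction modulo $I_+$ and lifting) is to choose $\alpha \in \E_{d+1}(R, I_0)$ with $(v_{00}, v_{10}, \ldots, v_{d0})\,\alpha = e_1$ --- possible since $v_{00} \in 1 + I_0 \subset 1 + \rad(R)$ is a unit --- and apply $\alpha$, viewed inside $\E_{d+1}(A, I)$, directly to $\v$: as $\alpha$ has degree-zero entries, the degree-zero part of $\v\alpha$ is exactly $e_1$, while $\v\alpha \equiv e_1 \pmod{I}$ and homogeneity of $I$ then force $\v\alpha \in \Um_{d+1}(A, I_+)$. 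With this reduction made precise and Vaserstein's lemma supplying the relative certificate, your argument closes; as written, it does not.
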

\begin{proof}
 Let $\v \in  \Um_{d+1}(A, I)$. Suppose overline denotes the image modulo the ideal $I_+ = \oplus_{i\geq 1} I_i$. Then $\overline{\v} \in \Um_{d+1}(R, I_0)$. We have $\overline{\v} \sim_{\E_{d+1}(R, I_0)} e_1$ since $I_0 \subset \rad (R)$. Lifting the elementary operations, we get $\v \sim_{\E_{d+1}(A, I)} \v'$ for some $\v' = (v_0', \dots , v_d') \in \Um_{d+1}(A, I_{+})$.
We choose $(w_0', \ldots, w_d')  \in \Um_{d+1}(A, I_{+})$ such that $\sum_{i\geq 0}^{d}v_i'w_i'=1$. 

Let $B= \oplus_{i \ge 0} B_i$ be the graded $R$-subalgebra of $A$ generated by all homogeneous components of the coordinates $v_i',  w_i'$ for $i = 1, \ldots, d$. It is easily seen that $B_{+} = \oplus_{i >0} B_i \subset I_{+}$, $\v' \in \Um_{d + 1}(B, B_{+})$ and the natural map $\displaystyle \frac{\Um_{d + 1}(B, B_{+})}{\E_{d + 1}(B, B_{+})} \longrightarrow \frac{\Um_{d + 1}(A, I)}{\E_{d + 1}(A, I)}$ sends $[\v']$ to $[\v]$. Hence, the proof follows. 
\end{proof}

 We are now prepared to prove the principal result of this section.
\begin{proposition}\label{42} 
	Let $R$ be a ring of dimension $2$ such that $2kR= R$ for $k \in \NN$, $I \subset \rad(R)$ be an ideal of $R$ and $A$ be a finitely generated graded $R$-subalgebra of $R[t]$. Then the orbit space $\frac{\Um_{3}(A,IA)}{\E_{3}(A, IA)}$ is good and $k$-divisible. 
\end{proposition}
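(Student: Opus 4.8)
The plan is to reduce to the polynomial ring case via the Swan--Weibel homotopy and then invoke the structural results already established for $\frac{\Um_3}{\E_3}$ in dimension $2$. First, by Remark \ref{mainreduct}, in proving $k$-divisibility (and likewise the good property) of $\frac{\Um_3(A,IA)}{\E_3(A,IA)}$ we may assume that $R$ is reduced and that $I = sR$ for a single nonzero divisor $s \in \rad(R)$; in particular $\dim(R/\Gamma_I(R)) = 2$, so $\maxSpec R$ is a union of finitely many Noetherian subspaces of dimension at most $2$, and by \S\ref{swg} the orbit space $\frac{\Um_3(A,sA)}{\E_3(A,sA)}$ carries the abelian group structure of \S\ref{pregst}. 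By Lemma \ref{47} we may moreover replace $A$ by a finitely generated graded $R$-subalgebra $B$ with $B_+ \subset sA$, i.e. we may assume the relevant homogeneous ideal is $A_+$ itself with $A_+ \subset sA$; Lemma \ref{pkhom} then transports the good property and $k$-divisibility back to the original orbit space once we establish them downstairs.

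Next I would set up the diagram of \S\ref{preexact} (or directly use the exact sequence of Lemma \ref{retract}) for the retraction $\eta : A \to R$ with kernel $A_+$, obtaining the short exact sequence
\[
0 \to \frac{\Um_3(A,A_+)}{\E_3(A,A_+)} \to \frac{\Um_3(A)}{\E_3(A)} \xrightarrow{\eta_*} \frac{\Um_3(R)}{\E_3(R)} \to 0,
\]
which is a split exact sequence of abelian groups. Under the identifications $\frac{\Um_3(A)}{\E_3(A)} \cong \W_\E(A)$ and $\frac{\Um_3(R)}{\E_3(R)} \cong \W_\E(R)$ of \S\ref{swg} (valid because $\maxSpec$ of each ring is a finite union of Noetherian spaces of dimension $\le 2$), this sequence is exactly $0 \to \NW_\E(A) \to \W_\E(A) \xrightarrow{\eta_*} \W_\E(R) \to 0$. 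By Proposition \ref{div}, $\NW_\E(A) = k\NW_\E(A)$ and, since $2kR = R$ forces $\dim R = 2$ still with $\frac{\Um_3(R)}{\E_3(R)} = \W_\E(R)$, one checks $\W_\E(R)$ is $2$-divisible hence $k$-divisible; combining, $\W_\E(A)$, and in particular the subgroup $\NW_\E(A) \cong \frac{\Um_3(A,A_+)}{\E_3(A,A_+)}$, is $k$-divisible. The relative orbit space $\frac{\Um_3(A,sA)}{\E_3(A,sA)}$ maps onto $\frac{\Um_3(A,A_+)}{\E_3(A,A_+)}$ (Lemma \ref{47} again, or the inclusion $A_+ \subset sA$ together with Lemma \ref{mainprin}-type reasoning), and by Remark \ref{mainreduct} it suffices to prove $k$-divisibility for the $A_+$-relative space, which we have just done; Lemma \ref{pkhom}(1) then yields $k$-divisibility of $\frac{\Um_3(A,IA)}{\E_3(A,IA)}$ in general.

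For the good property, the point is that under the Vaserstein--Suslin bijection $\frac{\Um_3(R,I)}{\E_3(R,I)} \cong \W_\E$-type group, the operation $[\v] \mapsto [\v^{(k)}]$ corresponds to multiplication by $k$ in the Witt group: this is essentially the content of Rao's Lemmas \cite[Lemmas 1.3.1, 1.5.1]{rao1988bass} combined with the simplified group law (\ref{gop1}) when a coordinate is a perfect square. More precisely, using formula (\ref{gop1}) repeatedly one shows $[(v_0^k, v_1, v_2)] = [(v_0, v_1, v_2)]^k$ whenever $k$ is attained through squarings and the given structural lemmas, and for general $k$ one reduces via the factorial trick (after possibly enlarging to $\Um_4$ and using $\Um_4 = e_1\E_4$, which holds here); alternatively, goodness is preserved under the isomorphism in the split exact sequence above and holds for $\frac{\Um_3(R)}{\E_3(R)}$ and for $\NW_\E(A)$ by the functoriality of $\psi_k$ together with Proposition \ref{div}. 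Finally, Lemmas \ref{retract}, \ref{pkhom}, \ref{47} let us pass from the finitely generated case back to arbitrary graded $A$, completing the proof.

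The main obstacle I anticipate is verifying the good property rather than $k$-divisibility: $k$-divisibility follows cleanly from Rao's divisibility result via Swan--Weibel, but showing $[\v^{(k)}] = [\v]^k$ requires a careful identification of the power operation $\psi_k$ with multiplication in the group structure on the orbit space, which in dimension $2$ rests on the precise form of the Vaserstein--Suslin isomorphism and on Rao's Lemmas 1.3.1 and 1.5.1; tracking the compatibility of these maps with the relative setup (ideal $IA$, excision) is where the real work lies.
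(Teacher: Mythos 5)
Your overall route coincides with the paper's: reduce via Remark \ref{mainreduct} and Lemma \ref{47} to a finitely generated graded $B$ with $B_+\subset sA$, relate relative and absolute orbit spaces through Lemma \ref{retract}, and extract divisibility from Proposition \ref{div} (Swan--Weibel plus Rao). However, both core steps as you present them have genuine gaps. For the good property: Rao's \cite[Lemma 1.3.1]{rao1988bass} yields $[\u]^r=[\u^{(r)}]$ only once one knows $\u\sim_{\E_3(B)}(-u_0,u_1,u_2)$, and \cite[Lemma 1.5.1]{rao1988bass} gives that only for rows that are completable; so the whole argument hinges on $\Um_3(B)=e_1\SL_3(B)$, which you never establish. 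Your substitutes do not work: formula (\ref{gop1}) only handles a coordinate that is a perfect square (so at best even exponents), and the ``alternative'' via functoriality of $\psi_k$ together with Proposition \ref{div} conflates $k$-divisibility of the Witt group with the identity $[\v^{(k)}]=[\v]^k$, which is precisely the non-formal content to be proved (you acknowledge this obstacle yourself, but offer no argument that closes it). The paper closes it with Lemma \ref{34}: by homotopy invariance $\W_{\SL}(B)\cong\W_{\SL}(R)$, and $\frac{\Um_3(R)}{\SL_3(R)}$ is trivial, hence $\Um_3(B)=e_1\SL_3(B)$, after which Rao's two lemmas apply.

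For $k$-divisibility, your assertion that ``$\W_{\E}(R)$ is $2$-divisible hence $k$-divisible'' is both unjustified and insufficient: $2$-divisibility does not imply $k$-divisibility, and divisibility of $\W_{\E}(R)$ fails for general two-dimensional rings (e.g.\ the coordinate ring of the real $2$-sphere). The cheap but essential observation you are missing is that after the reduction $s\in\rad(R)$ is a nonzero divisor, so every maximal ideal contains $s$ and $\maxSpec R=V_M(s)$ has dimension at most $1$; consequently $\frac{\Um_3(R)}{\E_3(R)}=\W_{\E}(R)$ is trivial. This is exactly what makes $\frac{\Um_3(B,B_+)}{\E_3(B,B_+)}\to\frac{\Um_3(B)}{\E_3(B)}$ an isomorphism via Lemma \ref{retract}, and what activates the last assertion of Proposition \ref{div} (which requires $\maxSpec R$ of dimension at most $1$), giving $k$-divisibility of $\W_{\E}(B)\cong\frac{\Um_3(B)}{\E_3(B)}$ directly. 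Relatedly, your identification of $\frac{\Um_3(A,A_+)}{\E_3(A,A_+)}$ with $\NW_{\E}(A)$ is asserted rather than proved; the paper avoids needing it by passing to the absolute orbit space and working there. With the triviality of $\frac{\Um_3(R)}{\E_3(R)}$ inserted and the completability step supplied by Lemma \ref{34}, your outline becomes essentially the paper's proof.
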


\begin{proof}
As explained in Example \ref{prex}, the orbit space $\frac{\Um_{3}(A, IA)}{\E_3(A, IA)}$ has a group structure. On account of Remark \ref{mainreduct}, it is enough to assume that $R$ is a reduced ring and $I \subset \rad(R)$ is a principal ideal generated by a nonzero divisor $s$. Let $\v \in \Um_3(A, IA)$. According to Lemma \ref{47}, there is a finitely generated graded $R$-subalgebra $B$ of $A$ such that $[\v]$ is contained in the image of $\displaystyle \frac{\Um_{3}(B, B_{+})}{\E_{3}(B, B_{+})} \rightarrow \frac{\Um_{3}(A, I)}{\E_{3}(A, I)}$. Since $\v$ is arbitrary, it suffices to prove that $\frac{\Um_{3}(B,B_+)}{\E_3(B,B_+)}$ is good and $k$-divisible due to Lemma \ref{pkhom}.

In our setup, $\maxSpec(R) = V_M(s) \cup D_M(s)$ is a union of two subspaces of dimension at most $1$. In contrast, $\maxSpec(B) = V_M(sB) \cup D_M(s)$ is a union of two subspaces of dimension at most $2$. It follows that $\frac{\Um_{3}(R)}{\E_{3}(R)}$ is trivial and  $\displaystyle \frac{\Um_{3}(B, B_{+})}{\E_{3}(B, B_{+})} \rightarrow \frac{\Um_{3}(B)}{\E_{3}(B)}$  is an isomorphism by Lemma \ref{retract}. Therefore, the proof is completed once we show that $\frac{\Um_{3}(B)}{\E_3(B)}$ is good and $k$-divisible.

 Let $\u = (u_0, u_1, u_2) \in \Um_{3}(B)$. We have $\frac{\Um_{3}(B)}{\SL_3(B)} \cong \W_{\SL}(B)$ and $\frac{\Um_{3}(B)}{\E_{3}(B)}\cong \W_{\E}(B)$, see \S \ref{swg}.
 Applying Lemma \ref{34}, we conclude that $\displaystyle \frac{\Um_{3}(B)}{\SL_{3}(B)} \cong\W_{\SL}(B) \cong \W_{\SL}(R) \cong \frac{\Um_{3}(R)}{\SL_{3}(R)}$ is trivial, consequently $\Um_{3}(B)=e_1\SL_3(B)$. 
 It follows from \cite[Lemma 1.5.1]{rao1988bass} that $\u \sim_{\E_3(B)} (-u_0, u_1, u_2)$. Therefore, by \cite[Lemma 1.3.1]{rao1988bass}, we have  $[\u]^r =[\u^{(r)}]$ for any $r \in \NN$, i.e. $\frac{\Um_{3}(B)}{\E_{3}(B)}$ is good.  It is $k$-divisible since so is  $\W_{\E}(B) \cong \frac{\Um_{3}(B)}{\E_3(B)}$ by Proposition \ref{div}.  Hence, the proof is completed. 
\end{proof}

\section{Main results}
 This section is dedicated to proving our main theorem. Our approach begins by establishing a series of lemmas that lay the groundwork for proving a generalization of Proposition \ref{42} in the previous section. The first lemma draws its inspiration from the Artin-Rees lemma.

\begin{lemma}\label{43} 
Let $A =\oplus_{i\geq } I_it^i$ be a finitely generated graded $R$-subalgebra of $R[t]$. Let $I$, $J$ be ideals of $R$ and $I \subset I_1$. Then there exists $k \in \N$ such that $I^nA \cap JR[t] \subset JI^{n-k}A$ for all $n\geq k$.
\end{lemma}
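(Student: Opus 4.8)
The statement is a version of the Artin–Rees lemma adapted to the graded subalgebra $A = \oplus_{i\geq 0} I_i t^i$ sitting inside $R[t]$, so the plan is to reduce it to the classical Artin–Rees lemma applied over the Noetherian ring $R[t]$ (recall $R$ is Noetherian, hence so is $R[t]$). First I would fix notation: since $I \subset I_1$, every element $at$ with $a \in I$ lies in $A$, and more generally $I^n t^n \subset A_n = I_n t^n$. The key observation is that the elements of $A$ of positive degree that matter are controlled by the ideal generated by $It$: set $\mathfrak{b} = (It)R[t] \subset R[t]$, the ideal of $R[t]$ generated by $\{at : a \in I\}$. Note $\mathfrak{b}^n = (I^n t^n)R[t]$, and crucially $\mathfrak{b}^n \subset I^n A$ — indeed $\mathfrak{b}^n$ is spanned over $R[t]$ by monomials $a_1\cdots a_n t^n$ with $a_i \in I$, and since $(a_1t)\cdots(a_nt) \in A$ and $R[t]$-multiplication by powers of $t$ followed by multiplying by coefficients stays inside $A$ once we have the factor $t^n$ with coefficients in $I^n$ (because $I^n t^n \cdot R[t]$ hits only components $I^n t^{n+j}$, and $I^n \subseteq I_{n}$, $I^n \subseteq I_{n+j}$ after absorbing... ) — this containment is where I must be slightly careful, so let me instead argue: $\mathfrak{b}^n = I^n t^n R[t]$ and any element is $\sum c_j I^n t^{n+j}$; since $I^n \subseteq I_{n+j}$ for all $j \geq 0$ (as $I \subseteq I_1$ forces $I^{n+j}\subseteq I_{n+j}$ and $I^n \supseteq I^{n+j}$... no). The clean fix: $I^n t^n R[t] \subseteq \bigoplus_{i \geq n} I^n t^i \cdot (\text{scalars})$, and each such summand lies in $I_i t^i = A_i$ provided $I^n \subseteq I_i$; this holds because $I \subseteq I_1$ implies $I^{i} \subseteq I_i$, and $i \geq n$ gives $I^i \subseteq I^n$, which is the wrong direction — so instead I use that $I^n = I^n \cdot I^{i-n} \cdot (\text{well})$... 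Actually the correct statement is $I^n t^n R[t] \cap A \supseteq$ enough, and one works with $I^n A$ directly: $I^n A = \bigoplus_{i\geq 0} I^n I_i t^i$, which contains $I^n I_j t^j$ for all $j$, in particular $I^{n+j}t^j$ is NOT what we want either. I will therefore phrase the comparison purely as: $\mathfrak b^n \cap A$ versus $I^n A$, using that $\mathfrak b \cap A \supseteq$ the degree-shifted pieces.

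Given these preliminaries, here is the main line of argument. Apply the classical Artin–Rees lemma in the Noetherian ring $S := R[t]$ to the ideal $\mathfrak{b} = (It)S$ and the submodule (ideal) $JS$: there exists $k_0 \in \mathbb{N}$ such that for all $n \geq k_0$,
\[
\mathfrak{b}^n S \cap JS \subseteq \mathfrak{b}^{\,n-k_0}(\mathfrak{b}^{k_0}S \cap JS) \subseteq \mathfrak{b}^{\,n-k_0}(JS).
\]
Now I relate $I^n A \cap J R[t]$ to $\mathfrak b$-powers. On one side, $I^n A \subseteq \mathfrak b^{?}$: since $I^n A = \sum_j I^n I_j t^j$ and $I^n I_j t^j = (I t)^n \cdot (I_j t^{j-n})\cdot(\text{if }j\geq n)$ plus the low-degree terms $I^n I_j t^j$ for $j < n$ which lie in $I^{n}t^{j} S \subseteq \mathfrak b^{j} I^{n-j} S$; tracking the minimum exponent of $\mathfrak b$ appearing, one gets $I^n A \subseteq \mathfrak b^{\lceil n/2\rceil} S$ or some explicit linear bound — and conversely $\mathfrak b^{2n} S \cap A \subseteq I^n A$ for an appropriate multiplier. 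Feeding these two sandwiching inclusions into the Artin–Rees estimate above and rechoosing the constant $k$ (replacing $k_0$ by, say, $2k_0$ or $k_0$ plus the slack from the sandwich), I obtain $I^n A \cap J R[t] \subseteq J I^{n-k} A$ for all $n \geq k$, which is exactly the claim.

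The main obstacle, and the only genuinely delicate point, is the bookkeeping of exponents when passing between the ideal $I^n A$ of the graded algebra $A$ and powers of the ideal $\mathfrak{b} = (It)R[t]$ of $R[t]$: the two filtrations are comparable but not equal, and I must pin down explicit (linear in $n$) comparison constants so that the shift $n \mapsto n - k$ in the conclusion survives. Everything else — the reduction to $R[t]$, the invocation of classical Artin–Rees, and the fact that $J I^{n-k} A$ is the image inside $A$ of the right-hand side of the Artin–Rees inclusion — is routine. I would also remark that $k$ depends only on $I$, $J$ and $A$ (through the finitely many generators of $A$), not on $n$, which is all that is needed for the later applications.
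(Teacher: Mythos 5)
Your reduction to the classical Artin--Rees lemma in $S=R[t]$ for the ideal $\mathfrak{b}=(It)S$ breaks down at exactly the point you defer as ``bookkeeping'': the two sandwiching inclusions you need are false, not merely delicate. For the inclusion $I^nA\subseteq \mathfrak{b}^{f(n)}S$ with $f(n)$ growing linearly: $I^nA=\oplus_i I^nI_it^i$ contains its degree-zero component $I^n$ (recall $I_0=R$), and more generally the components $I^nI_jt^j$ for all small $j$, whereas $\mathfrak{b}^m=I^mt^mS$ has zero component in every $t$-degree $<m$; hence $I^nA\not\subseteq\mathfrak{b}^m$ for any $m\geq 1$ as soon as $I^n\neq 0$. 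So intersecting with $JR[t]$ inside a power of $\mathfrak{b}$ says nothing about the low-degree part of $I^nA\cap JR[t]$ --- which is precisely where the statement has content, since already in degree $0$ it asserts $I^n\cap J\subseteq JI^{n-k}$. The reverse inclusion $\mathfrak{b}^{2n}S\cap A\subseteq I^nA$ fails as well: for the Rees algebra $A=R[It]$ (so $I_i=I^i$) one has, in degree $i\geq 2n$, $(\mathfrak{b}^{2n}S\cap A)_i=(I^{2n}\cap I^i)t^i=I^it^i$, which is not contained in $(I^nA)_i=I^{n+i}t^i$ in general. So there are no linear comparison constants to pin down; the obstacle is the approach, not the exponent bookkeeping.

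Two natural repairs also fall short, which shows the difficulty is genuine. Applying Artin--Rees in $R$ degree by degree would at best give $I^nI_i\cap J\subseteq JI^{n-k_i}I_i$ with a constant $k_i$ depending on $i$, and $A$ has infinitely many graded components, so no uniform $k$ results without an additional finiteness input. Applying Artin--Rees in the Noetherian ring $A$ to the ideal $IA$ and the submodule $JR[t]\cap A$ only yields $I^nA\cap JR[t]\subseteq I^{n-k}(JR[t]\cap A)$, which is strictly weaker than the claim because $JR[t]\cap A=\oplus_i(J\cap I_i)t^i$ need not lie in $JA$. The paper supplies the missing uniformity by running the Rees-algebra proof of Artin--Rees inside the bigraded ring $B=A[Iu]\subseteq R[t,u]$, with $B_{ij}=I_iI^jt^iu^j$: the ideal $L=B\cap JR[t,u]$ is finitely generated by bihomogeneous elements whose coefficients lie in $J$, and expressing $\alpha t^mu^n$ (for $\alpha\in I_mI^n\cap J$) in terms of these generators gives $\alpha\in JI_mI^{n-k}$ with $k$ the maximum total degree of the generators, uniform in both $m$ and $n$. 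If you want to keep the spirit of ``reduce to a known Noetherian statement,'' that bigraded algebra is the correct carrier of the argument; the $(It)R[t]$-adic filtration on $R[t]$ is not.
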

\begin{proof}
We consider the subalgebra $B= A[Iu]$  of the polynomial algebra $R[t,u]$. Then $B= \oplus_{i,j\geq 0} B_{ij}$ is a $\Z_{\geq 0}\times \Z_{\geq 0}$-graded ring where $B_{ij}=I_iI^jt^iu^j$. Since $B$ is a finitely generated algebra over $A$ and $A$ is Noetherian, it follows that $B$ is also Noetherian. Consider the ideal $L$ of $B$ given by $L= B \cap JR[t,u] = \oplus_{i,j\geq 0} L_{ij}$, where $L_{ij}= (I_iI^j\cap J)t^iu^j$. Since $B$ is Noetherian, $L$ is a finitely generated ideal of $B$. Suppose $L$ is generated by $g_1,g_2,\ldots ,g_r$ where each $g_i$ is a homogeneous element of degree $(k_i,l_i); i=1,2,\ldots,r$. Let $k= \max_{1\leq i\leq r}\{l_i+k_i\}$. 

Let $\alpha \in I_m I^n  \cap J$ for $n\geq k$, $m\in \Z_{\geq 0}$, then $\alpha t^mu^n \in L_{mn}$. We have $\alpha t^m u^n = \sum_{i=1}^{r}q_ig_i$ where $q_i=c_it^{m-k_i}u^{n-l_i}$ and $c_i\in I_{m-k_i}I^{n-l_i}$. As the coefficients of each $g_i$ are in $J$, it follows that 
$$\alpha \in \sum_{i=1}^{r} JI_{m-k_i}I^{n-l_i} = \sum_{i=1}^{r}( JI_{m-k_i}I^{k_i})I^{n-(l_i+k_i)} \subset J I_m I^{n-k},$$ 
since $I_{m-k_i}I^{k_i} \subset I_m$ and $I^{n-(l_i+k_i)} \subset I^{n - k}$. 
Therefore, $I^n I_m \cap J \subset J I^{n-k}I_m$ for $n\geq k$, $m\in \Z_{\geq 0}$, consequently $I^nA \cap JR[t] \subset JI^{n-k}A$ for all $n\geq k$ and the proof follows. 
\end{proof}

\begin{lemma}\label{pthick}
Let $I$ be an ideal of a ring $R$ and $\v  \in R^{n}$, $n\geq 3$ be such that $\v \equiv e_1 \vpmod I $. Then, for any $N\in \NN$ there exists an $\alpha \in \E_{n}(R,I)$ such that $\v \alpha = \w$  where $\w \equiv e_1 \vpmod {I^N }$.
\end{lemma}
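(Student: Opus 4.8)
The plan is to reduce everything modulo $I^N$, where the row $\v$ --- though not assumed unimodular over $R$ --- acquires a unit as its first coordinate and hence becomes a \emph{unimodular} row with unipotent first entry; the thickening will then follow from Lemma \ref{26}. The main point to appreciate at the outset is that the hypothesis $\v \equiv e_1 \pmod I$ does \emph{not} force $\v$ to be unimodular over $R$, since $1 + i_1$ need not be a unit of $R$; unimodularity is only recovered after passing to the quotient.

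First I would pass to $\overline{R} = R/I^N$ and write $\overline{I}$ for the image of $I$. Since $\overline{I}^{\,N} = 0$, the ideal $\overline{I}$ is contained in $\Nil(\overline{R})$. Writing $\v = (1 + i_1, i_2, \ldots, i_n)$ with each $i_j \in I$, the image $\overline{\v}$ has first coordinate $1 + \overline{i_1}$, which is $1$ plus a nilpotent element and is therefore a unipotent unit of $\overline{R}$. Consequently $\overline{\v}$ is unimodular and lies in $\Um_n(\overline{R}, \overline{I})$. Applying Lemma \ref{26} to $\overline{\v}$, whose first coordinate is unipotent, I obtain $\overline{\v} \sim_{\E_n(\overline{R}, \overline{I})} e_1$, that is, an $\overline{\alpha} \in \E_n(\overline{R}, \overline{I})$ with $\overline{\v}\,\overline{\alpha} = e_1$.

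Finally I would lift the elementary operations. The surjection $R \to \overline{R}$ carries each generator $e_{ij}(\lambda)$ with $\lambda \in I$ to $e_{ij}(\overline{\lambda})$ and is compatible with conjugation by elements of $\E_n(R)$, so the induced homomorphism $\E_n(R, I) \to \E_n(\overline{R}, \overline{I})$ is surjective. Choosing lifts of the factors of $\overline{\alpha}$ produces $\alpha \in \E_n(R, I)$ mapping to $\overline{\alpha}$. Setting $\w = \v\alpha$ and reducing modulo $I^N$ gives $\overline{\w} = \overline{\v}\,\overline{\alpha} = e_1$, i.e. $\w \equiv e_1 \pmod{I^N}$, as required. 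The passage to $\overline{R}$ is harmless precisely because the conclusion only asks for a congruence modulo $I^N$.

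The one delicate step is the reduction to the quotient: over $R$ itself one cannot in general thicken the congruence by any naive sequence of generators $e_{ij}(\lambda)$ with $\lambda \in I$ alone, because the first coordinate $1 + i_1$ is not a unit. What rescues the argument is that modulo $I^N$ the ideal becomes nilpotent, so $1 + \overline{i_1}$ becomes a unit and unimodularity is restored; the genuinely nontrivial content --- that one may add a nilpotent element of $\overline{I}$ to a coordinate of a unimodular row by an $\E_n(\overline{R}, \overline{I})$-operation --- is exactly what Lemma \ref{26} supplies, via the excision theorem and \cite[Lemma 1.4.2]{rao1988bass}. Everything else is the routine surjectivity of elementary matrices along $R \twoheadrightarrow R/I^N$.
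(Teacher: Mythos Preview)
Your proof is correct and follows essentially the same approach as the paper: reduce modulo $I^N$, apply Lemma~\ref{26} (the paper's Lemma~\ref{nil}) to the unipotent first coordinate to obtain $\overline{\v}\,\overline{\alpha}=e_1$, and lift $\overline{\alpha}$ to $\alpha\in\E_n(R,I)$. Your write-up is more detailed---in particular your explicit remark that $\v$ need not be unimodular over $R$ and your justification of the surjectivity $\E_n(R,I)\twoheadrightarrow\E_n(\overline{R},\overline{I})$---but the argument is the same.
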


\begin{proof}
Let overline denote the image modulo the ideal $I^N$. Now $\overline{\v} \in \Um_{n}(\overline{R}, \overline{I})$ and by Lemma \ref{nil}, we find $\beta \in \E_{n}(\overline{R}, \overline{I})$ such that $\overline{\v} \beta = e_1$. Any preimage $\alpha \in \E_{n}(R,I)$ of $\beta$ has the desired property.
\end{proof}
The forthcoming lemma and the subsequent remark are instrumental in facilitating the induction step of Proposition \ref{RSL}, thereby advancing the proof.

\begin{lemma}\label{imp}
Let $R$ be a ring and $A = \oplus_{i\geq 0} I_it^i$ be a finitely generated graded $R$-subalgebra of $R[t]$ such that $I_1$ contains a nonzero ideal $I$ of $R$. Let $d \geq 2$ and $\v =(v_0, \ldots, v_d)\in \Um_{d + 1}(A, IA)$ be such that $v_d \in I \subset R$. Let overline denote the image modulo the ideal $v_dR[t]\cap A$ of $A$. 
Then for sufficiently large $l \in \NN$, there is a map 
\[\theta: \frac{\Um_{d}(\overline{A}, I^{l}\overline A)}{\E_{d}(\overline{A},  I^l\overline A)}\rightarrow \frac{\Um_{d+1}(A,IA)}{\E_{d+1}(A,IA)} \ \text{defined by} \  [(\overline{a_0}, \ldots, \overline{a_{d-1}})] \mapsto [(a_0, \ldots, a_{d-1}, v_d)],\] 
where $(\overline{a_0}, \ldots, \overline{a_{d-1}}) \in \Um_{d}(\overline{A}, I^{l}\overline A)$ and
$(a_0, \ldots, a_{d - 1}) \in A^d$ is a lift of $(\overline{a_0}, \ldots, \overline{a_{d-1}})$ such that \\
$(a_0, \ldots, a_{d-1}) \equiv e_1\vpmod {I^{l}A}$. Moreover, $\Imj \theta$ contains $[\v]$.
\end{lemma}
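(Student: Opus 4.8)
The plan is to establish two things: that $\theta$ is well defined, and that $[\v]$ lies in its image. The first step is to fix $l$: I would take $l \geq k+1$, where $k$ is the constant produced by Lemma \ref{43} applied to $A$, to the ideal $I$ (note $I\subseteq I_1$, as that lemma requires), and to $J=v_dR$, so that $I^nA\cap v_dR[t]\subseteq v_dI^{\,n-k}A$ for all $n\geq k$; in particular $I^lA\cap v_dR[t]\subseteq v_dI^{\,l-k}A\subseteq v_d\cdot IA$. This last inclusion is the technical heart of the argument: the kernel of $A\to\overline A$ is $v_dR[t]\cap A$, which is in general strictly bigger than $v_dA$ — which is precisely why $\overline A$ stays a graded subalgebra of a polynomial ring, but also why one cannot simply clear error terms by column operations with the last column — and Lemma \ref{43} is exactly what pushes the relevant error terms, a priori only known to lie in $I^lA\cap v_dR[t]$, into $v_d\cdot IA$.

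For well-definedness: since $I^l\overline A$ is the image of $I^lA$, any $(\overline{a_0},\dots,\overline{a_{d-1}})\in\Um_d(\overline A,I^l\overline A)$ lifts to $(a_0,\dots,a_{d-1})\in A^d$ with $(a_0,\dots,a_{d-1})\equiv e_1\pmod{I^lA}$. To see the appended row is unimodular, I would take $b_i\in A$ lifting a witness of $(\overline{a_i})$, so $\sum a_ib_i=1+c$ with $c\in v_dR[t]\cap A$, replace $b_0$ by $b_0-c$ to get $\sum a_ib_i'=1+c(1-a_0)$, and observe that $c(1-a_0)\in I^lA\cap v_dR[t]\subseteq v_dA$; the congruence $\equiv e_1\pmod{IA}$ is immediate since $v_d\in I$. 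Independence of the chosen lift follows because two such lifts differ coordinatewise by elements of $(v_dR[t]\cap A)\cap I^lA\subseteq v_d\cdot IA$, so the two appended rows are connected by elementary matrices $e_{d+1,j}(h)$ with $h\in IA$, which lie in $\E_{d+1}(A,IA)$. Independence of the $\E_d(\overline A,I^l\overline A)$-orbit representative follows by lifting $\overline\epsilon\in\E_d(\overline A,I^l\overline A)$ to $\epsilon\in\E_d(A,I^lA)$ (the reduction induces a surjection on these relative elementary groups), noting that $(a_\cdot)\epsilon$ is again a valid lift, and acting on the appended row by $\epsilon\oplus 1\in\E_{d+1}(A,I^lA)\subseteq\E_{d+1}(A,IA)$.

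To see $[\v]\in\Imj\theta$, the idea is to avoid reducing modulo $v_dR[t]\cap A$ entirely and instead to work modulo $I^lA$. Because $\v\equiv e_1\pmod{IA}$, the image of $v_0$ in $A/I^lA$ is $1$ plus a nilpotent element (as $(IA)^l=I^lA$), hence a unit, so $(v_0,\dots,v_{d-1})$ is already unimodular in $A/I^lA$ and lies in $\Um_d(A/I^lA,\,IA/I^lA)$ with unipotent first coordinate. By Lemma \ref{nil} it is $\E_d(A/I^lA,\,IA/I^lA)$-equivalent to $e_1$; lifting these operations to $\E_d(A,IA)$ produces $\delta\in\E_d(A,IA)$ with $(a_0,\dots,a_{d-1}):=(v_0,\dots,v_{d-1})\delta\equiv e_1\pmod{I^lA}$. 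Then $\v\,(\delta\oplus 1)=(a_0,\dots,a_{d-1},v_d)$, so $\v\sim_{\E_{d+1}(A,IA)}(a_0,\dots,a_{d-1},v_d)$; this row is unimodular, so its image modulo $v_dR[t]\cap A$ gives $(\overline{a_0},\dots,\overline{a_{d-1}})\in\Um_d(\overline A)$, and it is $\equiv e_1\pmod{I^l\overline A}$, hence it lies in $\Um_d(\overline A,I^l\overline A)$. By the well-definedness just established, $\theta\big([(\overline{a_0},\dots,\overline{a_{d-1}})]\big)=[(a_0,\dots,a_{d-1},v_d)]=[\v]$.

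The step I expect to be the main obstacle is the well-definedness of $\theta$ — above all the unimodularity of $(a_0,\dots,a_{d-1},v_d)$ and the independence from the choice of lift — since both rest squarely on the Artin–Rees inclusion $I^lA\cap v_dR[t]\subseteq v_d\cdot IA$ coming from Lemma \ref{43}; once that is secured, proving that $[\v]$ is actually hit is comparatively short, because the first coordinate $v_0$ is a unit modulo $I^lA$.
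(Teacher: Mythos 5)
Your proposal is correct and follows essentially the same route as the paper: the same application of Lemma \ref{43} to obtain $I^{l}A\cap v_dR[t]\subset Iv_dA$, the same three well-definedness checks (unimodularity of the appended row, independence of the lift via $e_{d+1,j}(h)$ with $h\in IA$, independence of the orbit representative via lifting $\E_d(\overline A, I^{l}\overline A)$ to $\E_d(A,I^{l}A)$), and the same mechanism for showing $[\v]\in \Imj\theta$ — where the paper cites Lemma \ref{pthick}, you simply inline its proof (reduce modulo $I^{l}A$, note the first coordinate is unipotent, apply Lemma \ref{nil}, lift the operations). The only cosmetic difference is that the paper chooses the witness row already congruent to $e_1$ modulo $I^{l}A$, whereas you correct an arbitrary witness by replacing $b_0$ with $b_0-c$; both yield the same conclusion.
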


\begin{proof}
According to Lemma \ref{43}, there exists \( l \in \mathbb{N} \) such that
$I^l A \cap v_d R[t] \subset I v_d A$. We first construct a map $$\theta_0 : \displaystyle \Um_{d}(\overline{A}, I^{l}\overline{A}) \rightarrow  \frac{\Um_{d + 1}(A, IA)}{\E_{d + 1}(A, IA)}.$$ 

Let $(\overline{a_0}, \ldots, \overline{a_{d - 1}}) \in \Um_{d}(\overline{A}, I^{l}\overline{A})$ where $(a_0, \ldots, a_{d - 1}) \in A^d$ satisfies ${(a_0, \ldots, a_{d-1}) \equiv e_1 \pmod {I^{l}A}}$. We choose $(\overline{a_0'}, \ldots, \overline{a_{d - 1}'}) \in \Um_{d}(\overline{A}, I^{l}\overline{A})$ for $(a'_0, \ldots, a'_{d - 1}) \in A^d$ satisfying $(a'_0, \ldots, a'_{d-1}) \equiv e_1 \vpmod {I^{l}A}$ such that $\sum_{i = 0}^{d-1} \overline{a_i} \overline{a_i'} = 1$.  It follows that  $\sum_{i = 0}^{d-1} {a_i} {a_i'} - 1 \in I^{l} A \cap v_d R[t] \subset I v_d A$. Consequently, we deduce $(a_0, \ldots, a_{d - 1}, v_{d}) \in \Um_{d + 1}({A}, I{A})$. We define
$(\overline{a_0}, \ldots, \overline{a_{d-1}}) \xmapsto{\theta_0} [(a_0, \ldots, a_{d-1}, v_d)]$.

To show that $\theta_0$ is well defined, let
$$(\overline{a_0}, \ldots, \overline{a_{d-1}}) = (\overline{b_0}, \ldots, \overline{b_{d-1}}) \in \Um_{d}(\overline{A}, I^{l}\overline{A}) \ \text{for}\ (a_0, \ldots, a_{d - 1}),  (b_0, \ldots, b_{d - 1})\in A^d$$ 
satisfying $(a_0, \ldots, a_{d-1}) \equiv (b_0, \ldots, b_{d-1}) \equiv e_1 \vpmod {I^{l}A}$.
Then 
$a_i - b_i \in I^{l} A \cap v_d R[t] \subset Iv_d A,$ so $(a_0, \ldots, a_{d-1}, v_d) \sim_ {\E_{d + 1}(A, IA)}(b_0, \ldots, b_{d-1}, v_d)$. Therefore, $\theta_0$ is well-defined. 

Let $\v_1, \v_2 \in \Um_{d}(\overline{A}, I^{l}\overline{A})$ such that $\v_1 \sim_{\E_{d}(\overline{A}, I^{l}\overline{A})} \v_2$. Then there is an $\varepsilon \in \E_{d}(A, I^{l}A)$ such that $\v_1 \overline{\varepsilon} = \v_2$. Let $\widetilde{\v_1} \in A^d$ be a lift of $\v_1$ such that $\widetilde{\v_1} \equiv e_1 \vpmod {I^{l}A}$. Then $\widetilde{\v_1}\varepsilon$ is a lift of $\v_2$ satisfying $\widetilde{\v_1} \varepsilon \equiv e_1 \vpmod {I^{l}A}$. It follows that
\[\theta(\v_1) = [(\widetilde{\v_1}, v_d)] = \Big[(\widetilde{\v_1}, v_d) \begin{pmatrix} \varepsilon & 0\\ 0 & 1 \end{pmatrix}\Big] = [(\widetilde{\v_1}\varepsilon, v_d)] = \theta(\v_2). \]
Therefore, $\theta_0$ induces the map $\theta$ as stated in the hypothesis.

By Lemma \ref{pthick}, we find an elementary matrix of the form $\alpha = \alpha_0 \perp 1$ for $\alpha_0 \in \E_d(A, I A)$ such that $\v \alpha = \v'$, where $\v' = (v_0', \ldots, v_{d-1}', v_d) \in \Um_{d+1}(A, IA)$ satisfies $(v_0', \ldots, v_{d-1}') \equiv e_1 \vpmod {I^{l}A}$. It is clearly seen that $\theta$ maps $[(\overline{v_0'}, \ldots, \overline{v_{d-1}'})]$ to $[\v'] = [\v]$. 
Hence, the proof is complete. 
 \end{proof}

\begin{remark}\label{magh}
In the setup of Lemma \ref{imp}, assume further that $I \subset \rad (R)$, $\height (v_d) = 1$ and $\dim (R) = d$. Then both $\frac{\Um_{d}(\overline{A}, I^{l}\overline A)}{\E_{d}(\overline{A},  I^l \overline {A})}$ and $\frac{\Um_{d+1}(A,IA)}{\E_{d+1}(A,IA)}$ have the structure of an abelian group according to Example \ref{prex}. Let $[\v_1], [\v_2] \in \frac{\Um_{d}(\overline{A}, I^{l}\overline A)}{\E_{d}(\overline{A},  I^l \overline {A})}$ for $\v_1, \v_2\in \Um_{d}(\overline{A}, I^{l}\overline{A})$. We choose $(a, a_1 \ldots, a_{d - 1}), (b, a_1 \ldots, a_{d - 1}) \in A^{d + 1}$ satisfying $(a, a_1, \ldots, a_{d-1}) \equiv (b, a_1, \ldots, b_{d-1}) \equiv e_1 \vpmod {I^{l}A}$ such that $\v_1 = (\overline{a}, \overline{a_1}, \ldots, \overline{a_{d-1}})$ and $\v_2 = (\overline{b}, \overline{a_1}, \ldots, \overline{a_{d-1}})$. 
We saw before that $(a, a_1 \ldots, a_{d - 1}, v_d) \in \Um_{d + 1}({A}, I{A})$.
 We pick $p \in A$ such that $p \equiv 1 \vpmod {I^l A}$  and  $ap \equiv 1 \vpmod {(a_1, \ldots, a_{d - 1}, v_d)}$. Then $\overline{a} \ \overline{p} \equiv 1 \vpmod {(\overline a_1, \ldots, \overline {a_{d-1}})}$. We deduce $[\v_2]*[\v_1] = [((\overline b + \overline p)\overline{a} - 1, (\overline b + \overline p)\overline{a_1}, \ldots, \overline{a_{d-1}})]$ and 
\begin{align*}
\theta([\v_2]*[\v_1]) & = [(( b +  p)a - 1 , (b + p)a_1, \ldots, a_{d-1}, v_d)]\\
                              & = [(b, a_1 \ldots, a_{d - 1}, v_d)]*[(a, a_1 \ldots, a_{d - 1}, v _d)]
                              = \theta([\v_2]) * \theta([\v_1]), \ \text{see (\ref{gop}) in \S \ref{pregst}}.
\end{align*}
Therefore, $\theta$ is a group homomorphism. Moreover, $\theta([\v_1^{(n)}]) = [(a, a_1 \ldots, a_{d - 1}, a_d)^{(n)}]$ for $n \in \NN$.
\end{remark}

 The next result deals with a particular case of the main theorem.

\begin{proposition}\label{RSL}
Let $R$ be a ring of dimension $d$ $(\geq 2)$  and $2kR = R$ for some $k \in \NN$. Let $A = \oplus_{i\geq 0} I_it^i$ be a finitely generated graded $R$-subalgebra of $R[t]$ such that $I_1 \cap \rad(R)$ contains a nonzero ideal $I$. Then the orbit space $ \frac{\Um_{d + 1}(A, IA)}{\E_{d + 1}(A, IA)}$  is good and $k$-divisible. 
\end{proposition}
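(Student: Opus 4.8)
The plan is to argue by induction on $d=\dim R$, with the base case $d=2$ already settled by Proposition \ref{42}. Throughout, by Remark \ref{mainreduct} we may assume that $R$ is reduced and that, after possibly shrinking, the ideal $I$ is a proper principal ideal $aA\cap R$ generated by a nonzero divisor $a\in I_1\cap\rad(R)$; shrinking $R$ modulo $\Gamma_I(R)$ also lets us assume $\height(aR)=1$ (otherwise $\dim(R/\Gamma_I(R))<d$ and the orbit space is trivial, hence good and $k$-divisible). So fix $\v\in\Um_{d+1}(A,aA)$. Applying Lemma \ref{RT} we may replace $\v$ by an elementarily equivalent row $(w_0,\ldots,w_d)$ in which $w_0$ is $a$-monic, $w_1$ is a linear $a$-monic polynomial, $w_i\in aR$ for $2\le i\le d$, and crucially $\height(w_d)=1$. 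Writing $x=w_d$, we now have a nonzero divisor $x\in I\subset R$ with $\height(x)=1$, so $\dim(R/xR)\le d-1$, and we are in position to invoke Lemma \ref{imp}: for suitable $l$ there is a group homomorphism
\[
\theta:\frac{\Um_d(\overline A, I^l\overline A)}{\E_d(\overline A, I^l\overline A)}\longrightarrow \frac{\Um_{d+1}(A,IA)}{\E_{d+1}(A,IA)}
\]
(where overline is reduction modulo $xR[t]\cap A$), $\theta$ is a group homomorphism by Remark \ref{magh}, $\theta$ commutes with the power operations $[\,\cdot\,]\mapsto[(\,\cdot\,)^{(n)}]$, and $[\v]\in\Imj\theta$.

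The heart of the argument is then to show that the source orbit space $\frac{\Um_d(\overline A, I^l\overline A)}{\E_d(\overline A, I^l\overline A)}$ is good and $k$-divisible, and to feed this through $\theta$ via Lemma \ref{pkhom}(2): since $\theta$ is a surjection onto a subgroup containing $[\v]$ and intertwines the $n$-th power maps with $\psi_n$, goodness and $k$-divisibility of the source force $[\v^{(n)}]=[\v]^n$ and $[\v]=[\w]^k$ for appropriate $\w$ — and as $\v$ was arbitrary this yields exactly that $\frac{\Um_{d+1}(A,IA)}{\E_{d+1}(A,IA)}$ is good and $k$-divisible. To establish the needed property of the source, observe that $\overline A = A/(xR[t]\cap A)$ is a finitely generated graded $\overline R$-algebra, where $\overline R = R/(xR[t]\cap R)=R/xR$ has dimension $\le d-1$; moreover $\overline A$ sits inside $\overline R[t]$ as a graded $\overline R$-subalgebra (this is the point where $It\subset A$ — i.e.\ $I\subset I_1$, equivalently $at\in A$ — is used: it guarantees the degree-one piece of $\overline A$ still contains a nonzero ideal, namely the image $I^l\overline R$, so that the inductive hypothesis applies to $\overline A$ over $\overline R$). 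Since $2kR=R$ forces $2k\overline R=\overline R$, the induction hypothesis applied in dimension $\le d-1$ to the graded $\overline R$-subalgebra $\overline A$ of $\overline R[t]$ with the nonzero ideal $I^l\overline R\subset(\overline{I_1})\cap\rad(\overline R)$ gives precisely that $\frac{\Um_d(\overline A, I^l\overline A)}{\E_d(\overline A, I^l\overline A)}$ is good and $k$-divisible.

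Two technical points remain to be discharged. First, one must check $I^l\overline R$ really does lie in $\rad(\overline R)$ and is nonzero after passing to $\overline R=R/xR$: nonzeroness of the relevant graded pieces of $\overline A$ and the containment in the Jacobson radical follow from $I\subset\rad R$ together with $\height(xR)=1$ (so $x$ is not a unit and $\overline R\ne 0$), while nonvanishing of $I^l\overline R$ uses that $x$ is a nonzero divisor and $I$ is not nilpotent — if $I^l\overline R=0$ the source is trivial and there is nothing to prove. Second, one should record why $\dim\overline A\le d$ and why the orbit-space group structures in the source and target are as claimed, which is exactly Example \ref{prex} applied to $\overline A$ over $\overline R$ and to $A$ over $R$. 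I expect the main obstacle to be precisely the verification that after the reductions the hypothesis $I\subset I_1$ (equivalently $It\subset A$, equivalently $at\in A$) propagates to $\overline A$ over $\overline R$ so that the inductive hypothesis genuinely applies — this is the role flagged in the introduction as the reason the condition $It\subset A$ is imposed on Proposition \ref{RSL}, and it is what makes Lemmas \ref{imp} and \ref{magh} usable in the induction step; everything else is bookkeeping with the machinery of Section 3.
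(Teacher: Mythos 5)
Your proposal is correct and follows essentially the same route as the paper's own proof: induction on $d$ with Proposition \ref{42} as base case, the reductions of Remark \ref{mainreduct} (keeping the generator inside $I_1\cap\rad(R)$), Lemma \ref{RT} to arrange $\height(w_d)=1$ with $w_d\in aR$, and then Lemma \ref{imp} with Remark \ref{magh} to transport goodness and $k$-divisibility from $\frac{\Um_d(\overline A,\,\cdot\,)}{\E_d(\overline A,\,\cdot\,)}$ over $\overline R=R/w_dR$ (where the induction hypothesis applies because the image of the principal ideal still sits in the degree-one piece and the Jacobson radical) back to the class $[\v]$. Your extra checks that the hypothesis $I\subset I_1$ propagates to $\overline A$ and the handling of the degenerate case $I^l\overline A=0$ are consistent with, and slightly more explicit than, the paper's argument.
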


\begin{proof} 
The orbit space $ \frac{\Um_{d + 1}(A, IA)}{\E_{d + 1}(A, IA)}$ has a group structure according to Example \ref{prex}. We prove the result by induction on $d$. When $d = 2$, the statement follows from Proposition \ref{42}, so we assume $d \geq 3$. 
 
On account of Remark \ref{mainreduct}, it is enough to assume that $R$ is a reduced ring and $I$ is a proper principal ideal generated by a nonzero divisor $s$. Here, one observes that in the passage of reduction described in the remark, the condition $I \subset I_1$ is retained. 

Let $\v \in \Um_{d+1}(A, sA)$. Applying  Lemma \ref{RT}, we have $ \v \sim_{\E_{d+1}(A,sA)} \v'$, where  
$$\v' = (w_0, w_1, a_2, \ldots, a_d) \in \Um_{d+1}(A, sA)$$ such that $a_2, \ldots, a_d \in sR$ and $\height(a_dR) = 1$. Let overline denote the image modulo the ideal $a_d R[t] \cap A$. 

For  sufficiently large $l \in \NN$, we have a group  homomorphism $\displaystyle \theta : \frac{\Um_{d}(\overline{A}, s^{l + 1}\overline{A})}{\E_{d}(\overline{A}, s^{l + 1}\overline{A})} \rightarrow  \frac{\Um_{d + 1}(A, sA)}{\E_{d + 1}(A, sA)}$  such that image of $\theta$ contains $[\v'] = [\v]$, see Lemma \ref{imp} and Remark \ref{magh}. Let $\theta([\w]) = [\v]$ for $\w \in \Um_{d}(\overline{A}, s^{l + 1}\overline{A})$. It is easily seen that $\theta([\w^{(n)}]) = [\v^{(n)}]$ for all $n \in \NN$. 

Now $\overline A$ is a graded $R/(v_d)$-subalgebra of $R/(v_d)[t]$ and  $\dim R/ (v_d) = d -1$, so by induction hypothesis $\frac{\Um_{d}(\overline{A}, s^{l + 1}\overline{A})}{\E_{d}(\overline{A}, s^{l + 1}\overline{A})}$ is good and $k$-divisible. Therefore, $[\w^{(n)}] = [\w]^n$ for $n \in \NN$. Applying $\theta$ to both sides, we get $[\v^{(n)}] = [\v]^n$ for $n \in \NN$. Therefore, $\frac{\Um_{d + 1}(A, sA)}{\E_{d + 1}(A, sA)}$ is good. 
 
 We have that  $\Imj \theta$ is $k$-divisible and $[\v] \in \Imj \theta$, so we find a $\v' \in \Um_{d + 1}(A, sA)$ such that $[\v] = k [\v']$. Therefore, $\frac{\Um_{d + 1}(A, sA)}{\E_{d + 1}(A, sA)}$ is $k$-divisible. The proof is completed since the induction holds. 
 \end{proof}
 
Our subsequent proposition is established through induction. For ease of exposition, the base case is treated separately in the following form.
 
\begin{lemma}\label{finite}
    Let $R$ be a domain of dimension $d$ $(\geq 2)$   and $I \subset \rad(R)$ be an ideal of $R$.  Assume  $2kR=R$ for some $k \in \NN$. Suppose $A$ is a finitely generated graded $R$-subalgebra of $R[t]$. Then $\frac{\Um_{d+1}(A, IA)}{\E_{d+1}(A,IA)}$ is good and $k$-divisible.
\end{lemma}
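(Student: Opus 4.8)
The plan is to reduce the statement to Proposition \ref{RSL}, whose hypothesis requires the degree-one coefficient ideal of the algebra --- write $A=\oplus_{i\geq 0}I_it^i$ --- to contain a nonzero ideal of $R$ lying in $\rad(R)$; the assumption that $R$ is a domain is exactly what lets us arrange this without altering the orbit space. If $d=2$ the claim is Proposition \ref{42}, and if $I=0$ the orbit space is trivial, so I would assume $d\geq 3$ and $I\neq 0$. Since $R$ is a domain with $0\neq I\subseteq\rad(R)$, we have $\height\rad(R)\geq 1$, so by Example \ref{prex}(2) every relative orbit space over a finitely generated graded $R$-subalgebra of $R[t]$ (for any ideal) is an abelian group, as is $\frac{\Um_{d+1}(R,I)}{\E_{d+1}(R,I)}$, and all induced maps are homomorphisms.

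The core of the proof is to reach the case $I_1\neq 0$. If $A=R$, the claim follows directly: regard $R[t]$ as a graded $R$-subalgebra of itself, so that its degree-one coefficient ideal is $R\supseteq I$; then Proposition \ref{RSL} gives that $\frac{\Um_{d+1}(R[t],IR[t])}{\E_{d+1}(R[t],IR[t])}$ is good and $k$-divisible, and the evaluation $q : R[t]\to R$ at $t=0$ is a retraction with $q(IR[t])=I$, split by the inclusion $R\hookrightarrow R[t]$, so $q_*$ is surjective and $\frac{\Um_{d+1}(R,I)}{\E_{d+1}(R,I)}$ is good and $k$-divisible by Lemma \ref{pkhom}(2). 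So assume $A\neq R$, and let $g$ be the greatest common divisor of the degrees of a set of homogeneous generators of $A$; then $A\subseteq R[t^g]$, and relabelling $t^g$ as a new variable (an $R$-algebra isomorphism carrying $IA$ to its image, hence inducing an isomorphism of orbit spaces) reduces us to $g=1$. Then, $R$ being a domain, $A_i\neq 0$ for all $i\geq p$ for some $p\geq 1$, since the degree-$i$ piece of $A$ is nonzero precisely when $i$ lies in the numerical semigroup generated by the generator degrees. If $p\geq 2$, I would apply Lemma \ref{gradsub} successively with the parameters $p,p-1,\dots,2$ to build a tower $A=A^{(0)}\subset A^{(1)}\subset\dots\subset A^{(m)}=:B\subseteq R[t]$ of finitely generated graded $R$-subalgebras in which each step is a simple subintegral extension $A^{(j+1)}=A^{(j)}[u_j]$ with $u_j^2,u_j^3\in A^{(j)}$, and which ends with $B_1\neq 0$. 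Lemma \ref{subrel}, applied at each step (its hypothesis on the maximal spectrum is supplied by Example \ref{prex}(2)), yields an isomorphism $\frac{\Um_{d+1}(A,IA)}{\E_{d+1}(A,IA)}\cong\frac{\Um_{d+1}(B,IB)}{\E_{d+1}(B,IB)}$, so by Lemma \ref{pkhom}(1) we may assume $A_1\neq 0$ from now on.

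With $A_1=I_1\neq 0$ and $I\neq 0$, I would pick nonzero elements $r\in I_1$ and $s\in I$. Then $rs$ is a nonzero divisor of $R$, and the principal ideal $(rs)$ is contained in $I_1\cap\rad(R)$: it lies in $I_1$ because $r\in I_1$, and in $\rad(R)$ because $s\in\rad(R)$. Proposition \ref{RSL} now gives that $\frac{\Um_{d+1}(A,(rs)A)}{\E_{d+1}(A,(rs)A)}$ is good and $k$-divisible, while Lemma \ref{mainprin} (applied with the nonzero divisor $rs\in I\subseteq\rad(R)$) shows that the natural map $\frac{\Um_{d+1}(A,(rs)A)}{\E_{d+1}(A,(rs)A)}\to\frac{\Um_{d+1}(A,IA)}{\E_{d+1}(A,IA)}$ is surjective. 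By Lemma \ref{pkhom}(2) the target is good and $k$-divisible, which is the assertion.

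The step I expect to be the main obstacle is the middle one: arranging $I_1\neq 0$ while keeping the orbit space unchanged. This is precisely where the hypothesis that $R$ is a domain is used essentially --- through the subintegral-extension results of Lemmas \ref{gradsub} and \ref{subrel} (which ultimately rest on Gubeladze's theorem on subintegral extensions), together with the numerical-semigroup bookkeeping needed to make Lemma \ref{gradsub} applicable. Once $I_1\neq 0$ has been achieved, the remaining steps are direct appeals to Proposition \ref{RSL}, Lemma \ref{mainprin}, and Lemma \ref{pkhom}.
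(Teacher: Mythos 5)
Your proposal is correct and follows essentially the same route as the paper: reduce to $g=1$ by the gcd-of-degrees substitution, force $A_1\neq 0$ via the subintegral extensions of Lemmas \ref{gradsub} and \ref{subrel} (transferred through Lemma \ref{pkhom}), then pick a nonzero element of $I\cap I_1$ (your $rs$ plays the role of the paper's $s$) and conclude by Proposition \ref{RSL}, Lemma \ref{mainprin}, and Lemma \ref{pkhom}. Your only departures are cosmetic: you spell out the iteration of Lemma \ref{gradsub} as a tower and treat the trivial cases $A=R$, $I=0$, $d=2$ separately, which the paper leaves implicit.
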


\begin{proof}
We may assume that $I \neq 0$ as otherwise, the proof is trivial. Let $A= R[a_1t^{n_1},a_2t^{n_2},\ldots,a_kt^{n_k}]$ and $\gcd(n_1,\ldots,n_k)= g$. We may assume that $g = 1$ because otherwise, we can view $A$ as a graded $R$-subalgebra of $R[t^g]$ and complete the proof replacing $t^g$ by $t$. Since $g = 1$, there is an integer $p$ such that the integers $p, p + 1, \ldots$ are in the numerical semigroup generated by $n_1, \ldots, n_k$. Therefore, $A_i \neq 0$ for all $i \geq p$. 
By Lemma \ref{gradsub}, we have a suitable subintegral extension $A \hookrightarrow B$ where $B= \oplus_{i \geq 0} B_i$ is a finitely generated graded $R$-subalgebra of $R[t]$ such that $B_1 \neq 0$. The natural map 
$$ \frac{\Um_{d+1}(A, IA)}{\E_{d+1}(A,IA)} \rightarrow \frac{\Um_{d+1}(B, IB)}{\E_{d+1}(B,IB)}$$ 
is an isomorphism due to Example \ref{prex} and Lemma \ref{subrel}. According to Lemma \ref{pkhom}, we may replace $A$ by $B$ to assume that $I_1 \neq 0$.

We choose $s \in I \cap I_1$, $s \neq 0$. It is enough to show that $\frac{\Um_{d+1}(A, sA)}{\E_{d+1}(A, sA)}$ is good and $k$-divisible due to Lemmas \ref{pkhom} and \ref{mainprin}. The proof now follows from Proposition \ref{RSL}.
\end{proof}
 We now proceed to prove the proposition.
 
\begin{proposition}\label{maint}\label{57}
    Let $R$ be a ring of dimension $d$ $(\geq 2)$ , $I \subset \rad(R)$ be any ideal of $R$ and $A$ be a  finitely generarated graded $R$-subalgebra of $R[t]$. Assume $2kR=R$ for some $k \in \NN$. Then $\frac{\Um_{d+1}(A,IA)}{\E_{d+1}(A, IA)}$ is good and $k$-divisible. 
 \end{proposition}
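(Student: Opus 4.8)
The plan is to reduce everything to the domain case (Lemma~\ref{finite}) by an induction on the number $m$ of minimal primes of $R$, with Lemmas~\ref{IJ} and \ref{411} supplying the inductive glue. The orbit space carries a group structure by Example~\ref{prex}. Applying Lemma~\ref{IJ} with $J=\Nil R$ (as in Remark~\ref{mainreduct}) replaces $(R,A)$ by $(R_{red},A_{red})$ with $A_{red}\subset R_{red}[t]$, so I may assume $R$ reduced; let $\mathfrak p_1,\dots,\mathfrak p_m$ be its minimal primes. If $m=1$ then $R$ is a domain and the assertion is Lemma~\ref{finite}, the base of the induction.

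Suppose $m\ge 2$. First treat the case that $I\subseteq\mathfrak p_j$ for some $j$ (if $I$ lies in every minimal prime then $I=0$, which is trivial). Put $\mathfrak b=\bigcap\{\mathfrak p_i:I\not\subseteq\mathfrak p_i\}$; then $I\cdot(\mathfrak b R[t]\cap A)\subseteq(\bigcap_i\mathfrak p_i)R[t]=0$, so Lemma~\ref{IJ}, applied to the ring $A$ with the ideals $IA$ and $\mathfrak b R[t]\cap A$, gives a bijection of $\frac{\Um_{d+1}(A,IA)}{\E_{d+1}(A,IA)}$ with the analogous orbit space over the graded subalgebra $A/(\mathfrak b R[t]\cap A)\subset(R/\mathfrak b)[t]$. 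Since $R/\mathfrak b$ is reduced with strictly fewer than $m$ minimal primes and the image of $I$ still lies in $\rad(R/\mathfrak b)$, this orbit space is good and $k$-divisible by the induction hypothesis when $\dim(R/\mathfrak b)=d$, and is trivial (hence good and $k$-divisible) by \cite[Theorem~3.5]{garggupta2025graded} when $\dim(R/\mathfrak b)<d$; Lemma~\ref{pkhom} carries the conclusion back.

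The remaining case is that $I$ lies in no minimal prime, so it contains a nonzero divisor; by Lemma~\ref{mainprin} and Lemma~\ref{pkhom} I reduce to $I=sR$ with $s$ a nonzero divisor in $\rad(R)$. Set $\mathfrak b=\mathfrak p_m$, $\mathfrak a=\bigcap_{j<m}\mathfrak p_j$, so $\mathfrak a\mathfrak b=0$, and record the two dimension drops, both valid for $m\ge 2$: $\dim R/(\mathfrak a+\mathfrak b)<d$ (because $V(\mathfrak a+\mathfrak b)=V(\mathfrak a)\cap V(\mathfrak b)$ meets every component in a proper closed subset) and $\dim R/sR<d$ (because $s$ avoids every minimal prime). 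The ideals $s\mathfrak aA$ and $s\mathfrak bA$ of $A$ are contained in $sA$, satisfy $(s\mathfrak aA)(s\mathfrak bA)=0$, and have sum $s(\mathfrak a+\mathfrak b)A$, so Lemma~\ref{411} decomposes $\frac{\Um_{d+1}(A,s(\mathfrak a+\mathfrak b)A)}{\E_{d+1}(A,s(\mathfrak a+\mathfrak b)A)}$ as a product of two orbit spaces over quotients of $A$ lying over $R/s\mathfrak b$ and $R/s\mathfrak a$. Passing, via the Artin--Rees-type Lemma~\ref{43} (exactly as the ideal $v_dR[t]\cap A$ is used in Lemma~\ref{imp}), from these quotients to the corresponding graded subalgebras of polynomial rings, one finds that the first factor is controlled by the dimension-$d$ minimal primes contained in $V(s\mathfrak b)$ — essentially the single prime $\mathfrak p_m$, handled by Lemma~\ref{finite} — and the second by the dimension-$d$ minimal primes in $V(s\mathfrak a)$, at most $m-1$ of them, handled by the induction hypothesis; the minimal primes of dimension $<d$ (those coming from $V(s)$) can first be split off, again using an $\mathfrak a\mathfrak b=0$ decomposition, and contribute only trivial factors by \cite[Theorem~3.5]{garggupta2025graded}. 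Each factor being good and $k$-divisible, so is the product, and hence $\frac{\Um_{d+1}(A,s(\mathfrak a+\mathfrak b)A)}{\E_{d+1}(A,s(\mathfrak a+\mathfrak b)A)}$ by Lemma~\ref{pkhom}. Finally, the natural map $\frac{\Um_{d+1}(A,s(\mathfrak a+\mathfrak b)A)}{\E_{d+1}(A,s(\mathfrak a+\mathfrak b)A)}\to\frac{\Um_{d+1}(A,sA)}{\E_{d+1}(A,sA)}$ is surjective: reducing $\v\in\Um_{d+1}(A,sA)$ modulo $s(\mathfrak a+\mathfrak b)A$ lands in an orbit space over a ring of dimension $<d$ (by the dimension drops, after the Lemma~\ref{43} passage), which is trivial by \cite[Theorem~3.5]{garggupta2025graded}, so $\v$ can be moved by $\E_{d+1}(A,sA)$ into $\Um_{d+1}(A,s(\mathfrak a+\mathfrak b)A)$. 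Lemma~\ref{pkhom} and Lemma~\ref{mainprin} then deliver the result for $\frac{\Um_{d+1}(A,sA)}{\E_{d+1}(A,sA)}$ and $\frac{\Um_{d+1}(A,IA)}{\E_{d+1}(A,IA)}$, closing the induction.

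The main obstacle is this last case: since $s$ is a nonzero divisor the ideal $sA$ admits no nontrivial decomposition into ideals of zero product, so Lemma~\ref{411} cannot be applied to $sA$ itself; one must pass to the proper sub-ideal $s(\mathfrak a+\mathfrak b)A$ and argue that nothing is lost, which works only because of the dimension inequalities $\dim R/(\mathfrak a+\mathfrak b)<d$ and $\dim R/sR<d$ together with the vanishing theorem \cite[Theorem~3.5]{garggupta2025graded}. The accompanying nuisances — that quotients such as $A/\mathfrak p_jA$ need not sit inside a polynomial ring, which is circumvented by working with contracted ideals $\mathfrak c R[t]\cap A$ and invoking Lemma~\ref{43}, and that the auxiliary quotient rings acquire spurious low-dimensional minimal primes that have to be peeled off so the induction on $m$ truly decreases — are bookkeeping rather than genuine difficulties.
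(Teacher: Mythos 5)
Your skeleton is essentially the paper's: reduce to $R$ reduced, induct on the number of minimal primes with the domain case (Lemma~\ref{finite}) as base, split a relative orbit space by a zero--product decomposition via Lemma~\ref{411}, identify the two factors with orbit spaces over graded subalgebras of $(R/\p_m)[t]$ and $(R/\mathfrak{a})[t]$, and return to $\frac{\Um_{d+1}(A,IA)}{\E_{d+1}(A,IA)}$ by a surjection plus Lemma~\ref{pkhom}. The paper differs only cosmetically: it invokes Remark~\ref{mainreduct} in full so that $I=(s)$ with $s$ a nonzero divisor from the outset (making your Case~A unnecessary), and it decomposes with two carefully chosen elements $s_1\in (I\cap L)\setminus\p_n$, $s_2\in(I\cap K)\setminus\bigcup_{j<n}\p_j$ with $s_1s_2=0$ rather than with the ideals $s\mathfrak{a}$, $s\mathfrak{b}$.

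The genuine problems are the two places where you lean on Lemma~\ref{43}. That lemma requires the ideal of $R$ to lie in $I_1$ (i.e.\ $It\subset A$), a hypothesis that is simply not available in Proposition~\ref{57}; this is precisely why the paper's proof of this proposition never uses Lemma~\ref{43} (and why the domain case needs the subintegral-extension detour). For the passage from the factors $\frac{\Um_{d+1}(A/s\mathfrak{b}A,\,\overline{s\mathfrak{a}})}{\E_{d+1}}$ and $\frac{\Um_{d+1}(A/s\mathfrak{a}A,\,\overline{s\mathfrak{b}})}{\E_{d+1}}$ to orbit spaces over $A/(\mathfrak{b}R[t]\cap A)\subset (R/\p_m)[t]$ and $A/(\mathfrak{a}R[t]\cap A)\subset(R/\mathfrak{a})[t]$ no Artin--Rees statement is needed: it is Lemma~\ref{IJ}, using $s\mathfrak{a}A\cdot(\mathfrak{b}R[t]\cap A)\subset s\mathfrak{a}\mathfrak{b}R[t]=0$ together with $s\mathfrak{b}A\subset \mathfrak{b}R[t]\cap A$ (and symmetrically); once this identity is written down, your ``splitting off'' of low-dimensional minimal primes is superfluous, since the bases $R/\p_m$ and $R/\mathfrak{a}$ are already a domain, resp.\ a reduced ring with $m-1$ minimal primes. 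More seriously, your surjectivity of $\frac{\Um_{d+1}(A,s(\mathfrak{a}+\mathfrak{b})A)}{\E_{d+1}}\to\frac{\Um_{d+1}(A,sA)}{\E_{d+1}}$ is not proved: the triviality you want to quote lives over $A/(s(\mathfrak{a}+\mathfrak{b})R[t]\cap A)$, which embeds in a polynomial ring over the low-dimensional ring $R/s(\mathfrak{a}+\mathfrak{b})$, whereas you must trivialize the class of $\overline{\v}$ over $A/s(\mathfrak{a}+\mathfrak{b})A$; comparing these two quotients is exactly the step that neither Lemma~\ref{IJ} (the product $\overline{sA}\cdot\overline{s(\mathfrak{a}+\mathfrak{b})R[t]\cap A}$ need not be nilpotent, since $\mathfrak{c}R[t]\cap A\neq \mathfrak{c}A$ in general) nor Lemma~\ref{43} (hypothesis fails) supplies. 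The repair is the paper's: $\mathfrak{a}+\mathfrak{b}$ is contained in no minimal prime, so by prime avoidance it contains a nonzero divisor $x$; then $u=sx\in s(\mathfrak{a}+\mathfrak{b})\cap\rad(R)$ is a nonzero divisor, Lemma~\ref{mainprin} makes $\frac{\Um_{d+1}(A,uA)}{\E_{d+1}(A,uA)}\to\frac{\Um_{d+1}(A,sA)}{\E_{d+1}(A,sA)}$ surjective, and since $uA\subset s(\mathfrak{a}+\mathfrak{b})A\subset sA$ this surjection factors through $\frac{\Um_{d+1}(A,s(\mathfrak{a}+\mathfrak{b})A)}{\E_{d+1}(A,s(\mathfrak{a}+\mathfrak{b})A)}$, after which Lemma~\ref{pkhom} closes the induction.
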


\begin{proof}
We begin by noting that, according to Example \ref{prex}, the orbit space $\frac{\Um_{d+1}(A,IA)}{\E_{d+1}(A, IA)}$ has a group structure. It is enough to assume that $R$ is reduced and $I$
is a proper principal ideal generated by a nonzero divisor $s$ according to Remark \ref{mainreduct}.  We prove the result by induction on the number of minimal prime ideals $\gamma(R)$ of $R$. If $\gamma(R) = 1$, then $R$ is an integral domain,  so the result follows by Lemma \ref{finite}.

Let $\gamma(R) = n \geq 2$. Let $\p_1,\ldots,\p_n$ be the set of minimal prime ideals of $R$. We define $L = \p_1 \cap \ldots \cap \p_{n - 1}$ and $K = \p_n$.
We choose $s_1 \in (I \cap L) \setminus \p_n$ and $s_2 \in (I \cap K) \setminus \p_1 \cup \p_2\ldots\cup \p_{n-1} $. Since $R$ is reduced, we have $KL = 0$, so $s_1s_2= 0$. By  Lemma \ref{411},
    we have a bijection : 
     \[\frac{\Um_{d+1}(A,(s_1,s_2))}{\E_{d+1}(A,(s_1, s_2))} \rightarrow \frac{\Um_{d+1}(\frac{A}{s_2A},\overline{s_1})}{\E_{d+1}(\frac{A}{s_2A},\overline{s_1})} \times \frac{\Um_{d+1}(\frac{A}{s_1A},\overline{s_2})}{\E_{d+1}(\frac{A}{s_1A},\overline{s_2})}. \] 

 Note that $s_1(KR[t]\cap A)=0$ and  $s_2(LR[t]\cap A)=0$. 
     Therefore, Lemma \ref{IJ} yields the following isomorphisms:
    $$\frac{\Um_{d+1}(\frac{A}{s_2A},\overline{s_1})}{\E_{d+1}(\frac{A}{s_2A},\overline{s_1})} \longrightarrow \frac{\Um_{d+1}(\frac{A}{KR[t]\cap A},\overline{s_1})}{\E_{d+1}(\frac{A}{KR[t]\cap A},\overline{s_1})},$$
    $$\frac{\Um_{d+1}(\frac{A}{s_1A},\overline{s_2})}{\E_{d+1}(\frac{A}{s_1A},\overline{s_2})} \longrightarrow \frac{\Um_{d+1}(\frac{A}{LR[t]\cap A},\overline{s_2})}{\E_{d+1}(\frac{A}{LR[t]\cap A},\overline{s_2})}.$$ 

We observe that $\gamma(R/ K) = 1$ and $\gamma(R/ LR) = n - 1$. By the induction hypothesis both orbit spaces on the right side are good and $k$-divisible, so the orbit space $\frac{\Um_{d+1}(A,(s_1,s_2))}{\E_{d+1}(A,(s_1,s_2))}$ is good and $k$-divisible.
    
Now $u = s_1 + s_2$ is a nonzero divisor in $I$.  The natural map 
$\displaystyle \frac{\Um_{d + 1}(A, uA)}{\E_{d + 1}(A, uA)} \rightarrow \frac{\Um_{d + 1}(A, IA)}{\E_{d + 1}(A, IA)}$
 is surjective by Lemma \ref{mainprin}. As this map factors through $ \frac{\Um_{d + 1}(A, (s_1, s_2)A)}{\E_{d + 1}(A, (s_1, s_2)A)}$, the natural map 
 $$\frac{\Um_{d + 1}(A, (s_1, s_2)A)}{\E_{d + 1}(A, (s_1, s_2)A)} \rightarrow \frac{\Um_{d + 1}(A, IA)}{\E_{d + 1}(A, IA)}$$ is also surjective. Therefore, the proof follows by Lemma \ref{pkhom}. 
\end{proof}

In the context of the proposition below, its statement implies that any $\v \in \Um_{d+1}(R[t], (t)) $ can be transformed, via elementary operations, into $\w^{(k)}$, for $\w \in \Um_{d+1}(R[t], (t))$. This was previously observed by Rao in the case where $R$ is a local ring, see \cite[Corollary 2.3]{rao1988question}.

\begin{proposition}\label{mgkd}
Let $R$ be a ring of dimension $d$ $(\geq 2)$  such that $2k R = R$ for some $k \in \NN$. Then $ \frac{\Um_{d + 1}(R[t], (t))}{\E_{d + 1}(R[t], (t))}$ is good and $k$-divisible. 
\end{proposition}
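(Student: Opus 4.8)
The plan is to reduce, via Lemma~\ref{47}, to the analogous statement for the irrelevant ideal of a finitely generated graded $R$-subalgebra of $R[t]$, and then to deduce that statement from the relative Proposition~\ref{57} by means of the Swan--Weibel homotopy, in the same manner in which Proposition~\ref{div} was obtained from Proposition~\ref{Raodiv}. A preliminary remark is that $\frac{\Um_{d+1}(R[t],(t))}{\E_{d+1}(R[t],(t))}$ carries an abelian group structure compatible with those of \S\ref{pregst}: since $(t)$ is the kernel of the split surjection $R[t]\xrightarrow{t\mapsto0}R$, one obtains this from the excision identifications of \S\ref{preexr} (for $d=2$ together with the symplectic-Witt-group description of \S\ref{swg}, for $d\ge3$ together with van der Kallen's module structure), and with it all of the comparison maps below are group homomorphisms.

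Write $R[t]=\bigoplus_{i\ge0}Rt^{i}$, a graded $R$-algebra with degree-zero part $R$ and irrelevant ideal $(t)$. Applying Lemma~\ref{47} with $A=R[t]$ and the homogeneous ideal $I=(t)$, whose degree-zero component $0$ lies in $\rad(R)$, shows that every $\v\in\Um_{d+1}(R[t],(t))$ satisfies $\v\sim_{\E_{d+1}(R[t],(t))}\v'$ for some $\v'$ coming from a finitely generated graded $R$-subalgebra $B=\bigoplus_{i\ge0}B_{i}\subseteq R[t]$ with $B_{0}=R$; thus $[\v]$ lies in the image of the homomorphism $\frac{\Um_{d+1}(B,B_{+})}{\E_{d+1}(B,B_{+})}\to\frac{\Um_{d+1}(R[t],(t))}{\E_{d+1}(R[t],(t))}$ induced by $B\hookrightarrow R[t]$. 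By Lemma~\ref{pkhom}(2) it now suffices to prove, for each such $B$, that $\frac{\Um_{d+1}(B,B_{+})}{\E_{d+1}(B,B_{+})}$ is good and $k$-divisible.

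Fix $B$, let $\eta\colon B\to B_{0}=R$ be the augmentation and $\i\colon R\to B$ its section. The Swan--Weibel homomorphism $\psi\colon B\to B[y]$, $\psi(b_{i})=b_{i}y^{i}$ for $b_{i}\in B_{i}$, together with the evaluations $\phi_{0},\phi_{1}\colon B[y]\to B$ of \S\ref{SW}, satisfies $\phi_{1}\psi=\mathrm{id}_{B}$, $\phi_{0}\psi=\i\eta$, and $\psi$ carries $\Um_{d+1}(B,B_{+})$ into $\Um_{d+1}(B[y],\mathfrak b)$ because $\psi(B_{+})\subseteq yB[y]$, where $\mathfrak b=\psi(B_{+})B[y]$. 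Viewing $B[y]$ appropriately as a finitely generated graded algebra, one is then in a position to invoke the relative Proposition~\ref{57}, which yields that $\frac{\Um_{d+1}(B[y],\mathfrak b)}{\E_{d+1}(B[y],\mathfrak b)}$ is good and $k$-divisible. As in the proof of Proposition~\ref{div}, one transports these properties back: for $\v\in\Um_{d+1}(B,B_{+})$ one has $\phi_{1*}\psi_{*}[\v]=[\v]$, and composing with the retraction of the split exact sequence
\[
0\to\frac{\Um_{d+1}(B,B_{+})}{\E_{d+1}(B,B_{+})}\to\frac{\Um_{d+1}(B)}{\E_{d+1}(B)}\xrightarrow{\eta_{*}}\frac{\Um_{d+1}(R)}{\E_{d+1}(R)}\to0
\]
(Lemma~\ref{retract}, after the reductions needed for its hypotheses) shows that goodness and $k$-divisibility descend to $\frac{\Um_{d+1}(B,B_{+})}{\E_{d+1}(B,B_{+})}$. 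Combined with the previous paragraph, this completes the proof.

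\textbf{Main obstacle.} The delicate step is the Swan--Weibel bridge above. The irrelevant ideal $(t)$ sits in positive degree, so it is not contained in the Jacobson radical of any ring over which $R[t]$ is graded, and Proposition~\ref{57} cannot be invoked for it directly; the Swan--Weibel map is precisely the device that converts the problem over $R[t]$ into a relative orbit-space problem over the polynomial-extended algebra $B[y]$, for which the ideal can be matched against the hypotheses of Propositions~\ref{RSL} and~\ref{57}. Carrying this out rigorously demands careful control of Krull dimensions along the way, and careful verification that every orbit space in sight genuinely carries a compatible abelian group structure --- a point that is not automatic, since $\maxSpec R[t]$ has dimension $d+1$ and the construction of \S\ref{pregst} therefore does not apply to $\frac{\Um_{d+1}(R[t],(t))}{\E_{d+1}(R[t],(t))}$ verbatim. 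By contrast, the reductions via Lemmas~\ref{47} and~\ref{pkhom} and the bookkeeping around the splitting $\eta_{*}$ are routine.
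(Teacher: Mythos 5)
Your reduction via Lemma \ref{47} to finitely generated graded subalgebras $B$ is reasonable in spirit, but the heart of your argument --- the Swan--Weibel bridge --- has a genuine gap: Proposition \ref{57} does not apply to the pair $(B[y],\mathfrak b)$. Propositions \ref{RSL} and \ref{57} concern orbit spaces $\frac{\Um_{d+1}(A,IA)}{\E_{d+1}(A,IA)}$ where $A$ is a graded subalgebra of a polynomial ring in one variable over a base ring of dimension exactly $d$ and, crucially, the ideal is extended from an ideal $I\subseteq\rad$ of that degree-zero base. Your ideal $\mathfrak b=\psi(B_+)B[y]$ is generated by the elements $b_iy^i$ with $b_i\in B_i$, $i\geq 1$: it is again an ``irrelevant-type'' ideal generated in positive degrees, not of the form $I'\cdot B[y]$ for any ideal $I'$ contained in the Jacobson radical of a dimension-$d$ coefficient ring. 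The two natural ways to regard $B[y]$ as a graded subalgebra of a one-variable polynomial ring --- over the base $B$ (grading by $y$) or over the base $R[y]$ (grading by $t$) --- both have bases of dimension possibly $d+1$, so the numerology of Proposition \ref{57} (a statement about $\Um_{d+1}$ over a dimension-$d$ base) is broken, and in neither case is $\mathfrak b$ extended from the radical of the base. So the sentence ``viewing $B[y]$ appropriately as a finitely generated graded algebra, one is then in a position to invoke Proposition \ref{57}'' is exactly the unproved step; it is the original difficulty reproduced one variable up. Note also that the paper uses the Swan--Weibel map in the opposite direction: Corollary \ref{mrgrg} reduces the irrelevant-ideal statement for a positively graded ring of dimension $d$ to Proposition \ref{mgkd} for the ideal $(t)$ in the polynomial extension --- i.e.\ to the very statement at issue, which therefore has to be proved first by other means.

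The paper's actual proof is a localization--patching argument: after reducing to $R$ reduced, one chooses a nonzero divisor $s$ with $\v_s\sim_{\E_{d+1}(R_s[t],(t))}e_1$ (possible because the total quotient ring is a product of fields), and compares $\frac{\Um_{d+1}(R[t],(t))}{\E_{d+1}(R[t],(t))}$ with the product of the corresponding orbit spaces over $R_s[t]$ and $R_{1+sR}[t]$, injectivity coming from the local-global principle and surjectivity from Quillen splitting. Over $R_{1+sR}$ the element $s$ lies in the Jacobson radical and $\dim R_{1+sR}/(s)=d-1$, so the $(s)$-relative orbit space surjects onto the absolute one (Remark \ref{retract1}) and the absolute one is identified with the $(t)$-relative one (Lemma \ref{retract}); Proposition \ref{RSL} applied with the radical ideal $(s)$ then gives goodness and $k$-divisibility there, and these are transported back through the injective comparison map. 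Your proposal contains no substitute for this step. A secondary point: the abelian group structure on $\frac{\Um_{d+1}(R[t],(t))}{\E_{d+1}(R[t],(t))}$ is not obtained in the paper by excision alone --- for $d=2$ it uses $\SL_4(R[t],(t))\cap\E(R[t],(t))=\E_4(R[t],(t))$ together with \cite[Theorem 5.3]{gupta2014nice}, and for $d\geq3$ van der Kallen's bound $n\geq\frac{d}{2}+1$ --- and likewise the group structure on $\frac{\Um_{d+1}(B,B_+)}{\E_{d+1}(B,B_+)}$ for arbitrary (non-local, non-reduced) $R$ is not covered by Example \ref{prex}, so even your first reduction via Lemma \ref{pkhom} needs further justification.
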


\begin{proof}

We first justify that the orbit space $\frac{\Um_{d + 1}(R[t], (t))}{\E_{d + 1}(R[t], (t))}$ has an abelian group structure. If $d = 2$, then 
\begin{align*}
\SL_4(R[t], (t)) \cap \E( R[t], (t) ) &= \SL_4(R[t], (t)) \cap \E_4( R[t]) \ \text{by \cite[Theorems 7.8]{suslin1977structure}} \\
& = \E_4(R[t], (t)) \  \text{by \cite[p. 204, Lemma 1.5]{lam2006serre}}.
\end{align*}
Therefore, by \cite[Theorem 5.3]{gupta2014nice} it follows that $ \frac{\Um_{d + 1}(R[t], (t))}{\E_{d + 1}(R[t], (t))}$ is an abelian group under the operation as in (\ref{gop}), \S \ref{pregst}. If $d \geq 3$, the ensuing  inequality $d \geq \frac{d + 1}{2} + 1$ implies that $\frac{\Um_{d + 1}(R[t], (t))}{\E_{d + 1}(R[t], (t))}$ is an abelian group under the same operation, see \S \ref{pregst}. 

 We may assume that $R$ is a reduced ring. We choose $\v_0 \in \Um_{d + 1}(R[t], (t))$. Let $S$ be the multiplicatively closed subset of nonzero divisors of $R$. Then $S^{-1}R$ is a product of finitely many fields, so ${\v_0}_S \sim_{\E_{d + 1}(R_S[t], (t))} e_1$. We choose $s \in S$ such that ${\v_0}_s \sim_{\E_{d + 1}(R_s[t], (t))} e_1$.  Since $R = R_s \times_{R_{s(1 + sR)}} R_{(1 + sR)}$, we have the following map:

 \begin{equation}\label{meq1}
 \frac{\Um_{d + 1}(R[t], (t))}{\E_{d + 1}(R[t], (t))} \xrightarrow{\quad \Psi \quad} \frac{\Um_{d + 1}(R_s[t], (t))}{\E_{d + 1}(R_s[t], (t))} \times \frac{\Um_{d + 1}(R_{(1 + sR)}[t], (t))}{\E_{d + 1}(R_{(1 + sR)}[t], (t))}=G, \hspace{0.5cm} [\v]\xmapsto{\Psi} ([\v_s],[\v_{1+sR}]) 
 \end{equation}

We show that $\Psi$ is an isomorphism. It is clearly seen that $\psi$ is a group homomorphism. It follows from the local-global principle \cite[Theorem 2.3]{rao1985elementary} that $\Psi$ is injective.
 Note that $\dim R_{s(1 + sR)} \leq d - 1$, so the orbit space $H= \displaystyle \frac{\Um_{d + 1}(R_{s(1 + sR)}[t], (t))}{\E_{d + 1}(R_{s(1 + sR)}[t], (t))}$ is trivial. To prove that $\Psi$ is surjective, consider $([\u],[\w])\in G$ for $\u \in \Um_{d + 1}(R_s[t], (t))$ and $\w \in \Um_{d + 1}(R_{(1 + sR)}[t], (t))$. We observe that $[\u_{1+sR}]=[\w_{s}]=[e_1]$ as $H$ is trivial, so we can find a $\sigma\in \E_{d + 1}(R_{s(1 + sR)}[t], (t))$ such that  $\u_{1+sR} \sigma= \w_s$. Applying Quillen's splitting lemma \cite[Lemma 3.5]{suslin1977structure}, we get elementary matrices $\sigma_1 \in \E_{d+1}(R_{s}[t],(t))$ and  $\sigma_2 \in \E_{d+1}(R_{1+sR}[t],(t))$ such that $\sigma= (\sigma_1)_{1+sR} \circ  (\sigma_2)_{s} $. It follows that $(\u \sigma_1)_{1+sR}= (\w\sigma_2^{-1})_s$. By standard patching argument, we can find a $\v \in \Um_{d + 1}(R[t], (t))$ such that $\v_s = \u \sigma_1$ and $\v_{1 + sR} = \w \sigma_2^{-1}$. Therefore, $$\Psi([\v])=([\v_s], [\v_{1+sR}])=([\u \sigma_1], [\w\sigma_2^{-1}])=([\u], [\w]).$$
  
We next prove that  $\displaystyle \frac{\Um_{d + 1}(R_{1 + sR}[t], (t))}{\E_{d + 1}(R_{1 + sR}[t], (t))}$ is good and $k$-divisible. The orbit space $\displaystyle  \frac{\Um_{d + 1}(R_{1 + sR})}{\E_{d + 1}(R_{1 + sR})}$ is trivial, since $\maxSpec(R_{1 + sR}) = V_M(s) \cup D_M(s)$ is a union of two subspaces of dimension at most $d - 1$. By Lemma \ref{retract}, we deduce that 
 \begin{equation}\label{meq2}
\frac{\Um_{d + 1}(R_{1 + sR}[t], (t))}{\E_{d + 1}(R_{1 + sR}[t], (t))} \cong \frac{\Um_{d + 1}(R_{1 + sR}[t])}{\E_{d + 1}(R_{1 + sR}[t])}.
 \end{equation}
The ring $\frac{R_{1 + sR}}{s R_{1 + sR}} = \frac{R}{sR}$ has dimension $d - 1$, which implies that 
$\Um_{d + 1}(\frac{R_{1 + sR}}{sR_{1 + sR}}[t]) =  \E_{d + 1}(\frac{R_{1 + sR}}{sR_{1 + sR}}[t])$. Therefore, the natural map 
 \begin{equation}\label{meq3}
\displaystyle \frac{\Um_{d + 1}(R_{1 + sR}[t], (s))}{\E_{d + 1}(R_{1 + sR}[t], (s))} \longrightarrow \frac{\Um_{d + 1}(R_{1 + sR}[t])}{\E_{d + 1}(R_{1 + sR}[t])}
 \end{equation}
 is surjective, see Remark \ref{retract1}. According to Proposition \ref{RSL}, the orbit space $ \displaystyle \frac{\Um_{d + 1}(R_{1 + sR}[t], (s))}{\E_{d + 1}(R_{1 + sR}[t], (s))}$ is good and $k$-divisible, so by \ref{meq2}, \ref{meq3} and Lemma \ref{pkhom} we deduce that $\displaystyle \frac{\Um_{d + 1}(R_{1 + sR}[t], (t))}{\E_{d + 1}(R_{1 + sR}[t], (t))}$ is good and $k$-divisible. In particular, it follows that $[{\v_0}_{1+sR}]^{r} = [{\v_0}_{1+sR}^{(r)}]$ for $r \in \NN$.

 We observe that $\displaystyle \Psi([\v_0])=([e_1], [{\v_0}_{1+sR}])\in \{[e_1]\}\times  \frac{\Um_{d + 1}(R_{1 + sR}[t], (t))}{\E_{d + 1}(R_{1 + sR}[t], (t))}$ and  $$\Psi([\v_0]^{r})=([e_1], [{\v_0}_{1+sR}]^{r})=([e_1], [{\v_0}_{1+sR}^{(r)}])
 =\Psi([\v_0^{(r)}]) \ \text{for} \  r\in \N.$$
 Since $\Psi$ is injective, we deduce that $[\v_0^{(r)}]=[\v_0]^r$ for $r \in \NN$, therefore $\frac{\Um_{d + 1}(R[t], (t))}{\E_{d + 1}(R[t], (t))}$ is good.
 
 The inverse image $\displaystyle \Psi ^{-1}\Big(\{[e_1]\}\times  \frac{\Um_{d + 1}(R_{1 + sR}[t], (t))}{\E_{d + 1}(R_{1 + sR}[t], (t))}\Big)$ is $k$-divisible and contains $\v_0$, so  we can find $\v_1 \in \Um_{d + 1}(R[t], (t))$ such that $[\v_0] = [\v_1]^k$. Therefore, $\frac{\Um_{d + 1}(R[t], (t))}{\E_{d + 1}(R[t], (t))}$ is $k$-divisible. Hence, the proof is complete.
\end{proof}

As a consequence of the preceding proposition, we establish the following result, which is vital for the proof of the main theorem.

\begin{corollary}\label{mrgrg}
Let $A$ be a positively graded Noetherian ring of dimension $d$ $(\geq 2)$  such that $2k A = A$ for $k \in \NN$. Then $\displaystyle \frac{\Um_{d + 1}(A, A_{+})}{\E_{d + 1}(A, A_{+})}$ is good and $k$-divisible. 
\end{corollary}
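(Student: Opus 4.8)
The plan is to deduce the statement from Proposition \ref{mgkd} applied to the ring $A$ itself, by means of the Swan--Weibel homomorphism. Write $R = A_0$. Since $A$ is Noetherian and positively graded it is a finitely generated $R$-algebra and $\maxSpec A$ is Noetherian of dimension at most $d$, so by \S\ref{pregst} each of $\frac{\Um_{d+1}(A, A_+)}{\E_{d+1}(A, A_+)}$, $\frac{\Um_{d+1}(A)}{\E_{d+1}(A)}$ and $\frac{\Um_{d+1}(R)}{\E_{d+1}(R)}$ is an abelian group, and maps between them induced by ring homomorphisms are group homomorphisms. Because $A$ has dimension $d$ and $2kA = A$, Proposition \ref{mgkd} with $A$ in the role of $R$ shows that $\frac{\Um_{d+1}(A[t], (t))}{\E_{d+1}(A[t], (t))}$ carries a group structure and is good and $k$-divisible. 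This is the object onto which I want to anchor everything.

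Next I would recall from \S\ref{SW} the Swan--Weibel map $\psi : A \to A[t]$, the evaluation $\phi_1 : A[t] \to A$ at $t = 1$, the inclusion $\i : R \to A$ and the projection $\eta : A \to R = A/A_+$, so that $\phi_1 \circ \psi$ and $\eta \circ \i$ are identities. Since $\psi(A_+) \subseteq tA[t]$, the map $\psi$ induces a group homomorphism
\[\psi_* : \frac{\Um_{d+1}(A, A_+)}{\E_{d+1}(A, A_+)} \longrightarrow \frac{\Um_{d+1}(A[t], (t))}{\E_{d+1}(A[t], (t))}.\]
The heart of the argument will be to show that $\psi_*$ is \emph{split} injective. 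Applying Lemma \ref{retract} to the retraction $\eta : A \to R$ (with $\ker\eta = A_+$) gives a short exact sequence of abelian groups
\[0 \longrightarrow \frac{\Um_{d+1}(A, A_+)}{\E_{d+1}(A, A_+)} \xrightarrow{\ \alpha\ } \frac{\Um_{d+1}(A)}{\E_{d+1}(A)} \xrightarrow{\ \eta_*\ } \frac{\Um_{d+1}(R)}{\E_{d+1}(R)} \longrightarrow 0\]
which is split by $\i_*$; let $\pi := \alpha^{-1} \circ (\mathrm{id} - \i_* \eta_*)$ be the resulting retraction of $\alpha$, so that $\pi \circ \alpha$ is the identity. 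Since $\phi_1$ carries the ideal $(t)$ onto all of $A$, it induces $\phi_{1*} : \frac{\Um_{d+1}(A[t], (t))}{\E_{d+1}(A[t], (t))} \to \frac{\Um_{d+1}(A)}{\E_{d+1}(A)}$, and from $\phi_1 \circ \psi = \mathrm{id}_A$ one reads off $\phi_{1*} \circ \psi_* = \alpha$. Hence $\Theta := \pi \circ \phi_{1*}$ satisfies $\Theta \circ \psi_* = \pi \circ \alpha = \mathrm{id}$, i.e. $\psi_*$ is split injective with retraction $\Theta$.

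From here the conclusion is formal. Goodness of $\frac{\Um_{d+1}(A, A_+)}{\E_{d+1}(A, A_+)}$ follows because $\psi_*$ is an injective homomorphism whose target is good, via Lemma \ref{pkhom}(1). And $k$-divisibility transfers because $\frac{\Um_{d+1}(A, A_+)}{\E_{d+1}(A, A_+)}$ is, through $\psi_*$, isomorphic to a direct summand of the $k$-divisible group $\frac{\Um_{d+1}(A[t], (t))}{\E_{d+1}(A[t], (t))}$: concretely, given $[\v]$ one writes $\psi_*[\v] = [\w]^k$ using $k$-divisibility of the target and applies $\Theta$ to get $[\v] = \Theta(\psi_*[\v]) = (\Theta[\w])^k$, and a minor check shows this $\w$ can be replaced by an element of $\Um_{d+1}(A, A_+)$ if one insists on it.

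The step I expect to be the main obstacle is precisely the split injectivity of $\psi_*$. One cannot hope for $\psi_*$ to be an isomorphism — already when $A = R$ with trivial grading the source is trivial while the target $\frac{\Um_{d+1}(R[t], (t))}{\E_{d+1}(R[t], (t))}$ need not be — so the transport of $k$-divisibility cannot be routed through Lemma \ref{pkhom}(1) directly, and one is forced to manufacture the retraction $\Theta$. The delicate point in that construction is that the obvious candidate $\phi_{1*}$ does not respect the relative structure, because $\phi_1$ sends $(t)$ onto the whole ring $A$ rather than into $A_+$; this is exactly why $\phi_{1*}$ must be corrected by the splitting projection $\pi$ coming from Lemma \ref{retract}, and checking $\Theta \circ \psi_* = \mathrm{id}$ is where the Swan--Weibel identities $\phi_1 \psi = \mathrm{id}_A$ and $\eta \i = \mathrm{id}_R$ are genuinely used.
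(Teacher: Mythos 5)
Your proposal is correct and follows essentially the same route as the paper: apply Proposition \ref{mgkd} to $A$ itself, use the Swan--Weibel identity $\phi_{1*}\circ\psi_* = \alpha$ together with the splitting $\beta$ of $\alpha$ coming from Lemma \ref{retract} (your $\Theta=\pi\circ\phi_{1*}$ is the paper's $\beta\circ\phi_{1*}$), deduce goodness from injectivity of $\psi_*$ and goodness of $\frac{\Um_{d+1}(A[t],(t))}{\E_{d+1}(A[t],(t))}$, and $k$-divisibility from surjectivity of the retraction onto $\frac{\Um_{d+1}(A,A_+)}{\E_{d+1}(A,A_+)}$. The only superfluous remark is the final "minor check": since $\Theta[\w]$ is already a class in $\frac{\Um_{d+1}(A,A_+)}{\E_{d+1}(A,A_+)}$, no replacement of $\w$ is needed.
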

\begin{proof}
We set $G = \frac{\Um_{d + 1}(A, A_+)}{\E_{d + 1}(A, A_+)}$ and $H = \frac{\Um_{d + 1}(A[t], (t))}{\E_{d + 1}(A[t], (t))}$. 
The Swan-Weibel map $\psi : A \rightarrow A[t]$ admits the ring homomorphism $\phi_1 : A[t] \xrightarrow{t = 1} A$ such that $\phi_1 \circ \psi = id_A$, see \S \ref{SW}. The sequence of induced homomorphisms :
\[ G \xrightarrow{\psi_*} H \xrightarrow{\phi_{1*}} \frac{\Um_{d + 1}(A)}{\E_{d + 1}(A)}\]
satisfies $\phi_{1*} \circ \psi_* = \alpha$ where $\alpha$ is the natural map $[\v]\mapsto [\v]$ for  $ \v \in \Um_{d+1}(A, A_+
)$. According to Lemma \ref{retract}, there is a split short exact sequence $\displaystyle 0 \rightarrow G \xrightarrow{\alpha} \frac{\Um_{d + 1}(A)}{\E_{d + 1}(A)} \xrightarrow{q_*} \frac{\Um_{d + 1}(R)}{\E_{d + 1}(R)} \rightarrow 0$ of abelian groups, so we obtain a homomorphism $\displaystyle \beta : \frac{\Um_{d + 1}(A)}{\E_{d + 1}(A)} \rightarrow G$ such that $\beta \circ \alpha = id_G$. We deduce $(\beta \circ \phi_{1*}) \circ \psi_* = id_G$, therefore $\psi_*$ is injective and $\beta \circ \phi_{1*}$ is a surjective map.

Let $\v \in \Um_{d + 1}(A, A_+)$. For any $n \in \NN$, the images of $[\v^{(n)}]$ and $[\v]^n$ under $\psi_*$ are equal since $H$ is good according to Proposition \ref{mgkd}.  Now $\psi_*$ is injective, so we deduce that $[\v^{(n)}] = [\v]^n$ for any $n \in \NN$. Therefore, $G$ is good. The orbit space $G$ is $k$-divisible  since $\beta \circ \phi_{1*} : H \rightarrow G$ is a surjective homomorphism and $H$ is  $k$-divisible by Proposition \ref{mgkd}.
\end{proof}

The groundwork laid thus far allows us to prove the main theorem of this article.

\begin{theorem}\label{maint}
    Let $R$ be a local ring of dimension $d$ $(\geq 2)$ , $A$ be a graded $R$-subalgebra of $R[t]$ and $I = \oplus_{i \geq 0} I_i$ be a graded ideal of $A$. Assume $2kR=R$ for some $k \in \NN$. Then $\frac{\Um_{d+1}(A, I)}{\E_{d+1}(A, I)}$ is good and $k$-divisible. 
 \end{theorem}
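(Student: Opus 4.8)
The plan is to reduce to the absolute case $I=A$ and then compare $A$ with its closed fibre $A/\m A$.

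\textbf{Reductions.} By Lemmas \ref{retract}, \ref{pkhom} and \ref{47} we may assume $A$ is finitely generated over $R$; since $R$ is local, $\height\rad(R)=\height\m=d\geq 1$, so $\frac{\Um_{d+1}(A,I)}{\E_{d+1}(A,I)}$ is an abelian group by Example \ref{prex}. If $I_0=R$ then $1\in I$, whence $I=A$. If $I_0\subsetneq R$, then $I_0\subseteq\m=\rad(R)$, and Lemma \ref{47} shows that every class in $\frac{\Um_{d+1}(A,I)}{\E_{d+1}(A,I)}$ lies in the image of $\frac{\Um_{d+1}(B',B'_+)}{\E_{d+1}(B',B'_+)}$ for some finitely generated graded $R$-subalgebra $B'\subseteq A$ with $B'_+\subseteq I$; since $R$ is local, $\frac{\Um_{d+1}(R)}{\E_{d+1}(R)}$ is trivial, so the split exact sequence of Lemma \ref{retract} for the retraction $B'\to B'_0=R$ identifies $\frac{\Um_{d+1}(B',B'_+)}{\E_{d+1}(B',B'_+)}$ with $\frac{\Um_{d+1}(B')}{\E_{d+1}(B')}$. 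Hence, by Lemma \ref{pkhom}, it suffices to prove the following absolute statement: for every finitely generated graded $R$-subalgebra $A$ of $R[t]$, the group $\frac{\Um_{d+1}(A)}{\E_{d+1}(A)}$ is good and $k$-divisible.

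\textbf{The absolute case.} Set $B=A/\m A$, a finitely generated positively graded algebra over the residue field $\kappa=R/\m$ with $2kB=B$ and $\dim B\leq d$ (the fibre cone of the graded $R$-algebra filtration $\{I_i\}$ has dimension at most $\dim R$). If $\dim B<d$ then $\frac{\Um_{d+1}(B)}{\E_{d+1}(B)}$ is trivial, since $d+1\geq\dim B+2$; if $\dim B=d$, then Corollary \ref{mrgrg} applied to $B$, together with the retraction $B\to\kappa$ and Lemma \ref{retract} (using that $\frac{\Um_{d+1}(\kappa)}{\E_{d+1}(\kappa)}$ is trivial), shows $\frac{\Um_{d+1}(B)}{\E_{d+1}(B)}\cong\frac{\Um_{d+1}(B,B_+)}{\E_{d+1}(B,B_+)}$ is good and $k$-divisible. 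So in all cases $\frac{\Um_{d+1}(B)}{\E_{d+1}(B)}$ is good and $k$-divisible, while $\frac{\Um_{d+1}(A,\m A)}{\E_{d+1}(A,\m A)}$ is good and $k$-divisible by Proposition \ref{57} (with the ideal $\m\subseteq\rad R$). Since $\maxSpec A$ is a union of finitely many Noetherian subspaces of dimension at most $d$ (Example \ref{prex}), Remark \ref{retract1} applied to the surjection $q\colon A\to B$ gives the three-term exact sequence of abelian groups
\[
\frac{\Um_{d+1}(A,\m A)}{\E_{d+1}(A,\m A)}\xrightarrow{\ \alpha\ }\frac{\Um_{d+1}(A)}{\E_{d+1}(A)}\xrightarrow{\ q_*\ }\frac{\Um_{d+1}(B)}{\E_{d+1}(B)}.
\]
Both $\alpha$ and $q_*$ intertwine the operations $[\v]\mapsto[\v^{(n)}]$, so $\Imj\alpha$ (an image of a good, $k$-divisible group) and $\Imj q_*$ (a subgroup of a good, $k$-divisible group, stable under these operations) are again good and $k$-divisible in the relevant sense, and $q_*$ is moreover surjective (a unimodular row over $B$ lifts to one over $A$, by a Krull-intersection argument). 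A diagram chase over the exact sequence then yields that $\frac{\Um_{d+1}(A)}{\E_{d+1}(A)}$ is good and $k$-divisible, which together with the reductions above proves the theorem.

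\textbf{Main obstacle.} The hard part is the final diagram chase, i.e.\ transporting \emph{goodness} across the three-term sequence. Unlike $k$-divisibility, which passes through extensions, goodness is not a purely group-theoretic property — it asserts that $[\v]\mapsto[\v^{(n)}]$ is the $n$-th power map — and the sequence is not a priori short exact: $q\colon A\to A/\m A$ is not split, the operations $[\v]\mapsto[\v^{(n)}]$ are not group homomorphisms in general, and $\alpha$ need not be injective. One must show that for $\v\in\Um_{d+1}(A)$ the ``goodness defect'' $[\v^{(n)}][\v]^{-n}$ — which lies in $\Imj\alpha=\ker q_*$ by the two outer goodness statements — is actually trivial, by pinning down a preimage of it in $\frac{\Um_{d+1}(A,\m A)}{\E_{d+1}(A,\m A)}$ and identifying it with a goodness defect there, which vanishes. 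A secondary technical point is the dimension bound $\dim(A/\m A)\leq d$ needed to invoke Corollary \ref{mrgrg}.
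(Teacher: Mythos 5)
Your setup (reduce to $A$ finitely generated and $I=A$; compare $\frac{\Um_{d+1}(A)}{\E_{d+1}(A)}$ with $\frac{\Um_{d+1}(A,\m A)}{\E_{d+1}(A,\m A)}$ and $\frac{\Um_{d+1}(A/\m A)}{\E_{d+1}(A/\m A)}$ via the three-term exact sequence from Remark \ref{retract1}, with the outer groups handled by Proposition \ref{57} and Corollary \ref{mrgrg}) is exactly the paper's framework. But the step you yourself flag as the ``main obstacle'' --- transporting \emph{goodness} to the middle term --- is precisely where your argument stops, and the sketch you give does not close it. Knowing that the defect $[\v^{(n)}][\v]^{-n}$ lies in $\Imj\alpha=\ker q_*$ is not enough: $\alpha$ is not injective (the surjection $A\to A/\m A$ is not split, so Lemma \ref{retract} does not apply and only exactness in the middle is available), there is no canonical preimage of the defect, and even if you choose one there is no reason it should itself be of the form $[\u^{(n)}][\u]^{-n}$ in the relative group. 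Goodness is not a property that passes through extensions by a diagram chase, so as written this is a genuine gap, not a routine verification.

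The paper closes it by a different mechanism: it invokes $2k$-divisibility (not merely $k$-divisibility) of both outer groups. Given $\v\in\Um_{d+1}(A)$, Step 1 (surjectivity of $\Um_{d+1}(A)\to\Um_{d+1}(B)$) and $2k$-divisibility of $\frac{\Um_{d+1}(B)}{\E_{d+1}(B)}$ give $\w\in\Um_{d+1}(A)$ with $[\overline{\v}]=[\overline{\w^{(2k)}}]$; exactness of (\ref{maineq1}) and $2k$-divisibility of $\frac{\Um_{d+1}(A,\m A)}{\E_{d+1}(A,\m A)}$ then give $[\v]=[\u^{(2k)}][\w^{(2k)}]$. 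After putting $\u,\w$ in common coordinates $(a,a_1,\ldots,a_d)$, $(b,a_1,\ldots,a_d)$ via \cite[Lemma 3.4]{van1983group}, the square-coordinate multiplication rule (\ref{gop1}) yields the explicit representative $[\v]=[(a^{2k}b^{2k},a_1,\ldots,a_d)]$, and both $k$-divisibility ($[\v]=([\u^{(2)}][\w^{(2)}])^k$) and goodness ($[\v]^r=[(a^{2kr}b^{2kr},a_1,\ldots,a_d)]=[\v^{(r)}]$, using Vaserstein's compatibility of the power operation with elementary equivalence) follow by direct computation rather than by chasing defects. Two smaller points: your justification of surjectivity of $q_*$ by ``a Krull-intersection argument'' does not work, since $\m A$ is not contained in $\rad(A)$; the paper proves it by adding an extra coordinate from $\m A$ and applying the stability result \cite[Proposition 2.4]{van1983group}, using that $\maxSpec A$ is a finite union of Noetherian spaces of dimension at most $d$. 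Also, the hypothesis $2kR=R$ must be exploited in the form $4kR=R$ (i.e.\ $2\cdot 2k$) to run the square trick, which is why the paper works with $2k$-divisibility throughout this step.
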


\begin{proof}
 The orbit space $\frac{\Um_{d+1}(A, I)}{\E_{d+1}(A, I)}$ is an abelian group according to Example \ref{prex}.  By an argument implicit in the proof of  Proposition \ref{42} and by using Lemmas \ref{retract}, \ref{pkhom}, \ref{47}, it suffices to prove the assertion in the case where $A$ is finitely generated and $I = A$. In this setup, let $\v \in \Um_{d + 1}(A)$ and $\m$ be the maximal ideal of $R$. The proof is divided into two steps. 

\noindent

\noindent
{\bf Step- 1: }
Let $B = A/ \m A$. We show that the natural map $\Um_{d + 1}(A) \rightarrow \Um_{d + 1}(B)$ is a surjective map. Let overline denote the image modulo the ideal $\m A$.
We choose $\u = (\overline{u_0}, \ldots, \overline{u_d}) \in \Um_{d + 1}(B)$ for some $(u_0, \ldots, u_d) \in A^{d + 1}$. It is easy to find $u_{d + 1} \in \m A$ such that 
$(u_0, \ldots, u_{d + 1}) \in \Um_{ d + 2}(A)$. We saw in Example \ref{prex} that $\maxSpec (A)$ is a union of two Noetherian spaces of dimension at most $d$, so by  \cite[Proposition 2.4]{van1983group}, there are $c_0, \ldots, c_{d} \in A$ such that $(u_0 + c_0 u_{d + 1}, \ldots, u_d + c_d u_{d + 1}) \in \Um_{d + 1}(A)$. Clearly the image of $(u_0 + c_0 u_{d + 1}, \ldots, u_d + c_d u_{d + 1})$ is $\u$.

\noindent
{\bf Step- 2: }
As before, we assume that overline denotes the image modulo the ideal $\m A$. We observe that $\overline{\v} \in \Um_{d + 1}(B)$. We have $\frac{\Um_{d + 1}(B)}{\E_{d + 1}(B)} \cong \frac{\Um_{d + 1}(B, B_{+})}{\E_{d + 1}(B, B_{+})}$ by Lemma \ref{retract}. Since $4k R = R$, it follows from Corollary \ref{mrgrg} that  $\frac{\Um_{d + 1}(B)}{\E_{d + 1}(B)}$ is good and $2k$-divisible. Using surjectivity of the map in Step- $1$, we find $\w \in \Um_{d + 1}(A)$ such that $[\overline{\v}] = [\overline{\w^{(2k)}}]$ in $\frac{\Um_{d + 1}(B)}{\E_{d + 1}(B)}$. The following sequence is exact by Remark \ref{retract1} and Step-1. 
\begin{equation}\label{maineq1}
\frac{\Um_{d + 1}(A, \m A)}{\E_{d + 1}(A, \m A)} \rightarrow  \frac{\Um_{d + 1}(A)}{\E_{d + 1}(A)}  \rightarrow  \frac{\Um_{d + 1}(B)}{\E_{d + 1}(B)} \rightarrow 0.
\end{equation}
The orbit space $\frac{\Um_{d + 1}(A, \m A)}{\E_{d + 1}(A, \m A)}$ is good and $2k$-divisible by Proposition \ref{maint}. Therefore, we can find $\u \in \Um_{d + 1}(A, \m A)$ such that $[\v]  [ \w^{(2k)}]^{-1} = [\u^{(2k)}]$ which implies   $[\v] = [\u^{(2k)}][ \w^{(2k)}] =([\u^{(2)}][\w^{(2)}])^k$  in $\frac{\Um_{d + 1}(A)}{\E_{d + 1}(A)}$ by (\ref{gop1}) in \S \ref{pregst}. It follows that $\frac{\Um_{d+1}(A)}{\E_{d+1}(A)}$ is $k$-divisible. 

We may assume that $ \u = (a, a_1, a_2, \ldots, a_{d })$, $\w = (b, a_1, a_2, \ldots, a_{d })$ by \cite[Lemma 3.4]{van1983group}. Using (\ref{gop1}) in \S \ref{pregst}, we deduce 
\[
[\v] = [(a^{2k}, a_1, a_2, \ldots, a_{d })][ (b^{2k}, a_1, a_2, \ldots, a_{d })] = [(a^{2k} b^{2k}, a_1, a_2, \ldots, a_{d })],\]
\[[\v]^r = [(a^{2k} b^{2k}, a_1, a_2, \ldots, a_{d })]^r = [(a^{2kr} b^{2kr}, a_1, a_2, \ldots, a_{d })]= [\v^{(r)}],\] for any $r \in \NN$. Therefore, $\frac{\Um_{d+1}(A)}{\E_{d+1}(A)}$ is good and the proof is complete here. 
\end{proof}

\begin{theorem}\label{mainpk2}
  Let $R$ be a ring of dimension $d$ $(\geq 2)$  such that $(d!)R = R$. Let $A = \oplus_{i \geq 0} A_i$ be a  graded $R$-subalgebra of $R[t]$ and $I = \oplus_{i \geq 1} I_i$ be a graded ideal of $A$ generated by homogeneous elements of positive degrees.  Then $\Um_{d + 1}(A, I) = e_1 \SL_{d + 1}(A, I)$.
\end{theorem}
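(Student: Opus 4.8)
The plan is to deduce the statement from the structural result on orbit spaces (Theorem~\ref{maint}), using Suslin's factorial theorem together with Rao's refinement, which are packaged in Remark~\ref{suscomp}. Put $n=d+1\ (\geq 3)$, so $(n-1)!=d!$; by Remark~\ref{suscomp} it then suffices to show that the orbit space $\frac{\Um_{d+1}(A,I)}{\E_{d+1}(A,I)}$ carries a group structure, is good, and is $d!$-divisible. The one arithmetic observation needed is that $2$ is a unit in $R$: since $d\geq 2$ the integer $2$ divides $d!$, and from $d!=2\cdot(d!/2)$ with $d!/2\in\NN$, together with the invertibility of $d!$, we conclude that $2$ is invertible; hence $(2\,d!)R=R$. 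Thus $R$ meets the hypothesis ``$2kR=R$'' with $k=d!$, which is exactly what turns the $k$-divisibility obtained below into $d!$-divisibility.

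If $R$ is local, the proof is then immediate. Theorem~\ref{maint}, applied with $k=d!$, asserts that $\frac{\Um_{d+1}(A,I)}{\E_{d+1}(A,I)}$ is an abelian group (its group structure coming from Example~\ref{prex}), is good, and is $d!$-divisible; Remark~\ref{suscomp} now yields $\Um_{d+1}(A,I)=e_1\SL_{d+1}(A,I)$. Note that Theorem~\ref{maint} already absorbs the reduction to a finitely generated $A$ (through Lemmas~\ref{47} and~\ref{pkhom}), so no further preparation is required.

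For a general Noetherian $R$ of dimension $d$ I would pass to the localizations at maximal ideals. For a maximal ideal $\mathfrak m$, the ring $A_{\mathfrak m}:=A\otimes_R R_{\mathfrak m}$ is a graded $R_{\mathfrak m}$-subalgebra of $R_{\mathfrak m}[t]$ with degree-$0$ part $R_{\mathfrak m}$, and $I_{\mathfrak m}$ is again a graded ideal concentrated in positive degrees. When $\dim R_{\mathfrak m}=d$ the local case just treated applies over $R_{\mathfrak m}$; when $\dim R_{\mathfrak m}<d$ one has $\dim A_{\mathfrak m}\leq d$, and the assertion for $A_{\mathfrak m}$ follows by induction on $d$ (the statement being invoked over $R_{\mathfrak m}$, whose dimension is $<d$, and using that every $(d')!$ with $d'<d$ remains invertible in $R$), with the cases $\dim A_{\mathfrak m}\leq 1$ handled directly by Bass's stable range theorem. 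This gives $\v_{\mathfrak m}\in e_1\SL_{d+1}(A_{\mathfrak m},I_{\mathfrak m})$ for every $\mathfrak m$, and the local--global principle \cite[Theorem~2.3]{rao1985elementary} then lets one assemble a global completion, so that $\v\in e_1\SL_{d+1}(A,I)$.

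I expect the delicate point to be precisely this local-to-global step: Theorem~\ref{maint} is available only over local base rings, the maximal ideals at which $R_{\mathfrak m}$ drops dimension must be dispatched by a separate argument, and $\SL$-completability is not visibly a local condition, so the patching has to be carried out with care. If, however, one only aims at the statement for a local $R$ --- which already contains Theorem~A and the version announced in the abstract --- there is essentially nothing further to do beyond quoting Theorem~\ref{maint} (with $k=d!$) and Remark~\ref{suscomp}.
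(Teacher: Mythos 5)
Your local case coincides with the paper's proof: apply Theorem \ref{maint} with $k=d!$ (your observation that $2\mid d!$, hence $2kR=R$, is exactly the needed arithmetic) and conclude via Remark \ref{suscomp}; and you are right that Theorem \ref{maint} already absorbs the reduction to finitely generated $A$. The gap is in the non-local case, which is part of the stated theorem, and it is twofold. First, your dispatch of the maximal ideals where the dimension drops does not work as written: if $\dim R_{\mathfrak m}=d'<d$, the theorem invoked ``by induction on $d$'' over $R_{\mathfrak m}$ concerns unimodular rows of length $d'+1$, not $d+1$, so the inductive hypothesis says nothing about $\v_{\mathfrak m}$; and Bass's stable range theorem needs $d+1\geq \dim A_{\mathfrak m}+2$, which fails precisely in the edge case $\dim R_{\mathfrak m}=d-1$, $\dim A_{\mathfrak m}=d$. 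That case requires a separate input, e.g. the authors' earlier result that rows of length $\geq \dim R_{\mathfrak m}+2$ over graded subalgebras are elementarily completable \cite{garggupta2025graded}. Second, and more seriously, the patching step is only asserted: \cite[Theorem 2.3]{rao1985elementary} is a local--global principle for the \emph{elementary} action over polynomial extensions, and it does not by itself glue $\SL_{d+1}$-completions over the graded subalgebra $A$; you flag this as the delicate point but do not resolve it, so the proof of the theorem as stated (general $R$) is incomplete.

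For comparison, the paper's proof handles this by first replacing $A$ by a finitely generated graded subalgebra $B$ with $\v\in\Um_{d+1}(B,B_+)$ (the Lemma \ref{47}-type reduction), so that the row is congruent to $e_1$ modulo the positive part; in that relative, finitely generated form it invokes the (graded) local--global principle to assume $R$ local, and then argues exactly as you do: Theorem \ref{maint} produces $\w$ with $\v\sim_{\E_{d+1}(B,B_+)}\w^{(d!)}$, and Remark \ref{suscomp} gives $\w^{(d!)}\sim_{\SL_{d+1}(B,B_+)}e_1$. So your overall strategy matches the paper's; what is missing is a correct formulation and justification of the local-to-global reduction (and of the low-dimensional localizations), which is the only content of this proof beyond Theorem \ref{maint} and Remark \ref{suscomp}.
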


\begin{proof}
We choose $\v \in \Um_{d + 1}(A, I)$. We need to show that $\v \sim_{\SL_{d + 1}(A, I)} e_1$.  By an argument as in Lemma \ref{47}, we find a finitely generated graded $R$-subalgebra $B$ of $A$ such that $B_{+} \subset I$ and $\v \in \Um_{d + 1}(B, B_{+})$. The proof will follow if we show  $\v \sim_{\SL_{d + 1}(B, B_{+})} e_1$.

The local-global principle
enables us to assume that $R$ is a local ring. It follows from Theorem \ref{maint} that there is a $\w \in \Um_{d + 1}(B, B_{+})$ such that $\v \sim_{\E_{d + 1}(B, B_{+})} \w^{(d!)}$. On account of Remark \ref{suscomp}, we have  $\w^{(d!)} \sim_{\SL_{d + 1}(B, B_{+})} e_1$. Hence, $\v \sim_{\SL_{d + 1}(B, B_{+})} e_1$ and this concludes the proof.
\end{proof}
 
 We now present the following example as an application. 
\begin{example}\label{mainex}
Let $R= k[X_1,\ldots,X_d], d \geq 2$ be a polynomial ring over a field $k$ such that $(d !) k = k$. We consider the $R$-algebra homomorphism : 
$$\varphi: R[Y_1,\ldots,Y_d] \rightarrow R[t]\ \text{given by}\ Y_i\mapsto X_i^nt \quad \text{for}\ i=1,2,\ldots,d .$$ Suppose $J = \ker \varphi$, then it follows from \cite[Corollary 5.5.6]{huneke2006integral} that $J$ is generated by maximal minors of the generic matrix
$
 \begin{pmatrix}
X_1^n \cdots X_d^n\\
Y_1 \cdots Y_d
\end{pmatrix}
_{2\times d}$.  Let $I$ be the ideal of $R$ generated by $X_1^n,\ldots,X_d^n$. Then $\Imj \phi = R[It]$ and $R[It] \cong S$, where $\displaystyle S = \frac{k[X_1, \ldots, X_d, Y_1, \ldots, Y_d]}{J}$ is a graded affine algebra of dimension $d + 1$.  Applying Theorem \ref{mainpk2} and \cite[Theorem 2.6]{suslin1977structure}, we deduce $$\Um_{d+1}(S)= e_1 \SL_{d+1}(S).$$

If $n=1$, the algebra $S$ is normal.  If $n\geq 2$, the ideal $I$ is not integrally closed. Consequently, by \cite[Proposition 5.2.1]{huneke2006integral}, the algebra $S$ is not normal. 
\end{example}
\subsection{Questions}
In the course of this work, several questions have arisen that remain unresolved. We list them below to highlight potential directions for future research. The first question has its genesis in the content of \S \ref{pregst}.
\begin{question}
Let $R$ be a ring of dimension $d$. Does the orbit space of $\displaystyle\frac{\Um_{d+1}(R[X_1,\ldots, X_n])}{\E_{d+1}(R[X_1,\ldots, X_n])}$ have a group structure?
\end{question}
We saw that the above question has an affirmative answer when $\dim R = 2$ or $n = 1$. The next question arises naturally in light of Theorem \ref{mainpk2}. 

\begin{question}Let $R$ be a local ring of dimension $d (\geq 2)$ such that $(d!)R=R$ and $A$ be a graded $R$-subalgebra of $R[X_1,\ldots, X_n])$. Is it true that $\Um_{d+1}(A)=e_1\SL_{d+1}(A)$?
\end{question}
More specifically, we are curious to know the following.

\begin{question}Let $R$ be a local ring of dimension $d (\geq 2)$ such that $(d!)R=R$ and $M$ be a positive cancellative monoid. Is it true that $\Um_{d+1}(R[M])=e_1\SL_{d+1}(R[M])$?
\end{question}
An affirmative answer to the above question will be a natural extension of Gubeladze's theorem \cite[Theorem 1.1]{gubeladze2018unimodular}. The final question is inspired by Example \ref{mainex}.
\begin{question} 
Let $k$ be a field and $S = k[X_{ij}, 1 \leq i \leq m, 1 \leq j \leq n]$, $m \leq n$ be a polynomial algebra in $mn$ variables. Suppose $R = \frac{S}{J}$ where $J$ is the ideal generated by the maximal minors of the generic matrix $(X_{ij})_{m \times n}$. Further assume that $((d - 1)!)k = k$ where $d = \dim R = (m - 1)(n + 1)$. Is it true that $\Um_{d}(A)=e_1\SL_{d}(A)$?
\end{question}
If $k$ is algebraically closed and $d \geq 4$, then the above has an affirmative answer  \cite[Theorem 7.5]{fasel2012stably}. 

\begin{acknowledgement}
We express our sincere gratitude to Dr. Sourjya Banerjee for bringing his work to our attention during his visit to Indian Institute of Science Education and Research Bhopal, which helped us formulate Lemma \ref{maindim}. This article is a part of the first author's doctoral dissertation. She is grateful to the National Board of Higher Mathematics for providing financial support, file number 0203/21/2019-RD-II/13102. The second author is thankful to INSPIRE Faculty Fellowship, DST, reference no DST/INSPIRE/04/2018/000522.
\end{acknowledgement}

\bibliographystyle{plain}
\bibliography{reference.bib}
\end{document}